\theoremstyle{plain}
\newtheorem{thm}{Theorem}[section]
\newtheorem{prop}[thm]{Proposition}
\newtheorem{lem}[thm]{Lemma}
\newtheorem{cor}[thm]{Corollary}
\theoremstyle{definition}
\newtheorem{dfn}[thm]{Definition}
\newtheorem{ex}[thm]{Example}
\newtheorem{rem}[thm]{Remark}
\numberwithin{equation}{section}
\newcommand{\G}{\Gamma}
\newcommand{\g}{\gamma}
\newcommand{\go}{\omega}
\newcommand{\gd}{\delta}
\newcommand{\gD}{\Delta}
\newcommand{\sm}{\left(\begin{smallmatrix}}
\newcommand{\esm}{\end{smallmatrix}\right)}
\newcommand{\mm}{\begin{pmatrix}}
\newcommand{\emm}{\end{pmatrix}}
\newcommand{\ov}{\overline}
\newcommand{\wt}{\widetilde}
\newcommand{\wh}{\widehat}
\newcommand{\ve}{\varepsilon}
\newcommand{\ga}{\alpha}
\newcommand{\gb}{\beta}
\newcommand{\mb}{\mathbf}
\newcommand{\mbb}{\mathbb}
\newcommand{\mcal}{\mathcal}
\newcommand{\gs}{\sigma}
\newcommand{\gt}{\theta}
\newcommand{\gT}{\Theta}
\newcommand{\gl}{\lambda}
\newcommand{\pa}{\partial}
\newcommand{\bi}{\binom}
\newcommand{\mF}{\mathcal F}
\newcommand{\mH}{\mathcal H}
\newcommand{\mI}{\mathcal I}
\newcommand{\mJ}{\mathcal J}
\newcommand{\mL}{\mathcal L}
\newcommand{\fJ}{\mathfrak J}
\newcommand{\fK}{\mathfrak K}
\newcommand{\mG}{\mathcal G}
\newcommand{\mQ}{\mathcal Q}
\newcommand{\bC}{\mathbb C}
\newcommand{\bR}{\mathbb R}
\newcommand{\bZ}{\mathbb Z}
\newcommand{\bQ}{\mathbb Q}
\newcommand{\bP}{\mathbb P}
\newcommand{\sdf}{\Psi{\mathcal D} (\mathcal F)}
\newcommand{\md}{\operatorname{\text{$\|$}}}
\newcommand{\fS}{\mathfrak S}
\newcommand{\fL}{\mathfrak L}
\newcommand{\fQ}{\mathfrak Q}
\newcommand{\fT}{\mathfrak T}
\newcommand{\PPi}{{^\pa}\Pi}
\newcommand{\BG}{\boldsymbol \Gamma}
\newcommand{\fj}{\widehat{\mathfrak J}}
\newcommand{\fk}{\widehat{\mathfrak K}}
\newcommand{\sh}{\operatorname{Sh}}
\newcommand{\qs}{\operatorname{\text{${_Q}$}Sh}}
\newcommand{\qt}{\operatorname{\text{${_Q}$}\Theta}}
\newcommand{\dsc}{\operatorname{Disc}}
\begin{document}

\title[ Quasimodular forms]{Quasimodular forms, Jacobi-like forms,\\[.1in]
and pseudodifferential operators\\[.5in]}


\author[YoungJu Choie]{YoungJu Choie$^\ast$}

\address{Department of Mathematics and
PMI(Pohang Mathematical Institute), POSTECH, Pohang 790-784,
Korea}

\email{yjc@postech.ac.kr}

\author[Min Ho Lee]{Min Ho Lee$^\dagger$}

\address{Department of Mathematics, University of Northern Iowa,
Cedar Falls, IA 50614, U.S.A.}

\email{lee@math.uni.edu}

\thanks{$^\ast$Supported in part by NRF20090083909 and NRF2009-0094069}
\thanks{$^\dagger$Supported in part by a PDA award from the University of
  Northern Iowa}

\begin{abstract}
We study various properties of quasimodular forms by using their
connections with Jacobi-like forms and pseudodifferential
operators.  Such connections are made by identifying quasimodular
forms for a discrete subgroup $\G$ of $SL(2, \bR)$ with certain
polynomials over the ring of holomorphic functions of the
Poincar\'e upper half plane that   are $\G$-invariant.  We
consider a surjective map from Jacobi-like forms to quasimodular
forms and prove that it has a right inverse, which may be regarded
as a lifting from quasimodular forms to Jacobi-like forms.  We use
such liftings to study Lie brackets and Rankin-Cohen brackets for
quasimodular forms.  We also  discuss Hecke operators and
construct Shimura isomorphisms and Shintani liftings for
quasimodular forms.
\end{abstract}

\keywords{Quasimodular forms, Jacobi-like forms, pseudodifferential
operators}

\subjclass[2000]{11F11, 11F50}

\maketitle \pagestyle{plain}


\section{\bf{Introduction}}

Quasimodular forms generalize classical modular forms and were introduced
by Kaneko and Zagier in \cite{KZ95}, where they identified quasimodular
forms with generating functions counting maps of curves of genus $g>1$ to
curves of genus $1$.  More generally, they  often appear as generating
functions of counting functions in various problems such as Hurwitz
enumeration problems, which include not only number theoretic problems but
also those in certain areas of applied mathematics (see e.g.\ \cite{BGHZ},
\cite{EO06},\cite{Eo01}, \cite{LR}, \cite{OP06}, \cite{Ro07}).  Unlike
modular forms, derivatives of quasimodular forms are also quasimodular
forms. The goal of this paper is to study various properties of
quasimodular forms by using their connections with Jacobi-like forms and
pseudodifferential operators.  Such connections are made by identifying
quasimodular forms over a discrete subgroup $\G$ of $SL(2, \bR)$ with
certain polynomials over the ring of holomorphic functions of the
Poincar\'e upper half plane that are invariant under an action of $\G$.

Jacobi-like forms for $\G$ are formal power series, whose coefficients are
holomorphic functions on the Poincar\'e upper half plane $\mH$, invariant
under a certain right action of $\G$ (see \cite{CM97}, \cite{Za94}).  This
invariance property is essentially one of the two conditions that must be
satisfied by a Jacobi form (cf.\ \cite{EZ85}), and it determines relations
among coefficients of the given Jacobi-like form.  Such relations can be
used to express each coefficient of a Jacobi-like form in terms of
derivatives of some modular forms for $\G$.  These modular forms belong to
a sequence of the form $\{ f_w \}_{w \geq 0}$, where $f_w$ is a modular
form of weight $2w +\gl$ for some $\gl \in \mbb Z$.  In fact, sequences of
modular forms for $\G$ of such type are in one-to-one correspondence with
Jacobi-like forms for $\G$ of weight $\gl$.  Another interesting aspect of
Jacobi-like forms is that a Jacobi-like form for $\G$ determines a
$\G$-automorphic pseudodifferential operator, which is a formal Laurent
series in $\pa^{-1}$ having holomorphic functions on $\mH$ as coefficients.
There is a natural right action of $SL(2, \bR)$ on the space of
pseudodifferential operators determined by linear factional transformations
of $\mH$, and $\G$-automorphic pseudodifferential operators are
pseudodifferential operators that are invariant under the action of $\G
\subset SL(2, \bR)$.  We can consider an isomorphism between the space of
formal power series and that of pseudodifferential operators, which
provides a correspondence between Jacobi-like forms for $\G$ and
$\G$-automorphic pseudodifferential operators.  Various topics related to
mutual correspondences among Jacobi-like forms, pseudodifferential
operators, and sequences of modular forms were investigated by Cohen,
Manin, and Zagier (see \cite{CM97} and \cite{Za94}).

Given integers $w$ and $m$ with $m \geq 0$, a quasimodular form for $\G$ of
weight $w$ and depth at most $m$ is a holomorphic function $f$ on $\mH$
associated to holomorphic functions $f_0, f_1, \ldots, f_m$ on $\mH$
satisfying
\[ \frac {1} {(cz+d)^w} f \biggl( \frac {az+b} {cz+d} \biggr) = f_0 (z) +
f_1 (z) \biggl( \frac c{cz+d} \biggr) + \cdots + f_m (z) \biggl( \frac
c{cz+d} \biggr)^m \]
for all $z \in \mH$ and $\sm a&b\\ c&d \esm \in \G$.  Then the
quasimodular form $f$ determines the polynomial
\[ F(z,X) = \sum^m_{r=0} f_r (z) X^r ,\]
which we call a quasimodular polynomial. The condition for $f$ to be a
quasimodular form is equivalent to the condition that the polynomial
$F(z,X)$ to be invariant under a certain right action of $\G$ on the space
of polynomials of degree at most $m$ with holomorphic coefficients.  There
is a natural surjective map from the space of Jacobi-like forms to that of
quasimodular forms, and a similar map exists from pseudodifferential
operators to quasimodular forms.  One of the main results of this paper is
the existence of the right inverse of such a surjecctive map, which may be
regarded as a lifting from quasimodular forms to Jacobi-like forms or
to pseudodifferential operators.

The right action of $SL(2, \bR)$ on the space of polynomials over
holomorphic functions on $\mH$ mentioned above can be used to
define Hecke operators on quasimodular polynomials, which are
compatible with the usual Hecke operators on modular forms.  On
the other hand, the action of $SL(2, \bR)$ on formal power series
described earlier determines Hecke operators on Jacobi-like forms.
We show that the surjective maps from Jacobi-like forms to
quasimodular polynomials are Hecke equivariant.  We also study
various applications of the liftings from quasimodular forms to
Jacobi-like forms. In particular, we introduce a Lie algebra
structure on the space of quasimodular polynomials and construct
Rankin-Cohen brackets for quasimodular forms.

As another application of liftings of quasimodular forms to Jacobi-like
forms, we study the Shimura correspondence for quasimodular forms.  Given
an odd integer $k$, the usual Shimura correspondence associates to each
modular form of weight $\gl = (k-1)/2$ a modular form of weight $2\gl-1$.
On the other hand, there is also a map carrying modular forms in the
opposite direction known as the Shintani lifting.  We introduce the notion
of quasimodular forms and Jacobi-like forms of half integral weight and
construct a quasimodular analog of Shimura isomorphisms and Shintani
liftings.

This paper is organized as follows.  In Section 2 we study formal power
series and polynomials over the ring of holomorphic functions on $\mH$.  We
consider linear maps between such power series and polynomials and
introduce actions of $SL(2, \bR)$ that are compatible with respect to these
maps.  In Section 3 we describe quasimodular forms for $\G$ and some of
their properties.  We introduce quasimodular polynomials, which can be
identified with quasimodular forms.  We also consider $\G$-invariant
formal power series known as Jacobi-like forms and discuss their
connections with quasimodular polynomials.  Section 4 is concerned with
Hecke operators on quasimodular polynomials that are compatible with those
on modular and Jacobi-like forms.  In Section 5 we construct a lifting of a
modular form to a quasimodular form whose leading coefficient coincides
with the given modular form.  The existence of liftings of quasimodular
forms to Jacobi-like forms is one of our main findings and is proved in
Section 6.  The noncommutative multiplication operation on the space of
pseudodifferential operators given by the Leibniz rule determines a natural
Lie algebra structure on the same space.  In Section 7 we use the
correspondence between pseudodifferential operators and Jacobi-like forms
as well as the liftings discussed in Section 6 to construct Lie brackets on
the space of quasimodular polynomials.  As another application of the
liftings, we study Rankin-Cohen brackets for quasimodular forms in Section
8.  We extend the notion of quasimodular and Jacobi-like forms to the case
of half integral weight in Section 9, and investigate the quasimodular analog
of correspondences of Shimura and Shintani in Sections 10 and 11.

\section{\bf{Formal power series and polynomials}} \label{S:pp}

In this section we study formal power series and polynomials whose
coefficients are holomorphic functions on the Poincar\'e upper half plane.
We consider linear maps between such power series and polynomials and
introduce actions of $SL(2, \bR)$ that are compatible with respect to these
maps.

The group $SL(2, \bR)$ acts on the Poincar\'e upper half plane $\mH$ as
usual by linear fractional transformations given by
\[ \g z = \frac {az+b} {cz+d} \]
for all $z \in \mH$ and $\g = \sm a&b\\ c&d \esm \in SL(2,\mbb R)$.  For
the same $z$ and $\g$, we set
\begin{equation} \label{E:kz}%
\fJ (\g, z) = cz+d, \quad \fK (\g,z) = \fJ(\g, z)^{-1} \frac{d}{dz}
  (\fJ(\g, z)) = \frac c{cz+d} ,
\end{equation}
so that we obtain the maps $\fJ, \fK: SL(2, \bR) \times \mH \to \bC$.  As
is well-known, the map $\fJ$ is an automorphy factor satisfying the cocycle
condition
\begin{equation} \label{E:kk}
\fJ (\g \g', z) = \fJ (\g, \g' z) \fJ (\g', z)
\end{equation}
for all $\g, \g' \in SL(2, \bR)$ and $z \in \mH$.  On the other hand, it
can be shown that the other map $\fK$ satisfies
\begin{equation} \label{E:5k}
\fK (\g \g', z) = \fK (\g', z) + \fJ (\g', z)^{-2} \fK (\g, \g' z).
\end{equation}

Let $\mF$ denote the ring of holomorphic functions on $\mH$, and let $\mF
[[X]]$ be the complex algebra of formal power series in $X$ with
coefficients in $\mF$.  Given elements $f \in \mF$, $\Phi (z,X) \in \mF
[[X]]$, $\gl \in \bZ$, and $\g \in SL(2, \bR)$, we set
\begin{equation} \label{E:xt}
(f \mid_\gl \g) (z) = \fJ (\g, z)^{-\gl} f (\g z)
\end{equation}
\begin{equation} \label{E:yt}
(\Phi \mid^J_\gl \g) (z,X) = \fJ (\g, z)^{-\gl} e^{- \fK (\g, z) X} \Phi
(\g z, \fJ (\g, z)^{-2} X)
\end{equation}
for all $z \in \mH$.  If $\g'$ is another element of $SL(2, \bR)$, then
from \eqref{E:kk} and \eqref{E:5k} it can be shown that
\[ f \mid_\gl (\g \g') = (f \mid_\gl \g) \mid_\gl  \g', \quad \Phi
\mid^J_\gl (\g \g') = (\Phi \mid^J_\gl \g) \mid^J_\gl  \g' ;\]
hence the operations $\mid_\gl$ and $\mid^J_\gl$ determine right actions
of $SL(2, \bR)$ on $\mF$ and $\mF [[X]]$, respectively. If $\gd$ is a
nonnegative integer, we set
\[ \mF [[X]]_\gd = X^\gd \mF [[X]], \]
so that an element $\Phi (z,X) \in \mF [[X]]_\gd$ can be written in the
form
\begin{equation} \label{E:tf}
\Phi (z,X) = \sum^\infty_{k =0} \phi_k (z) X^{k+\gd}
\end{equation}
with $\phi_k \in \mF$ for each $k \geq 0$.

Given a nonnegative integer $m$, we now consider the complex algebra
$\mF_m [X]$ of polynomials in $X$ over $\mF$ of degree at most $m$.  If
$\g \in SL(2, \bR)$, $\gl \in \bZ$ and $F (z,X) \in \mF_m [X]$, we set
\begin{equation} \label{E:js}
(F \md_\gl \g) (z, X) = \fJ (\g, z)^{-\gl} F (\g z, \fJ (\g, z)^2 ( X -
\fK (\g, z)))
\end{equation}%
for all $z \in \mH$.

\begin{lem}
If $F (z,X) \in \mF_m [X]$ and $\gl \in \bZ$, then we have
\[ ((F \md_\gl \g) \md_\gl \g' )(z, X) = (F \md_\gl (\g \g')) (z, X) \]
for all $\g, \g' \in SL(2, \bR)$.
\end{lem}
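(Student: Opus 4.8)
The plan is to verify the asserted relation by applying the definition \eqref{E:js} twice and comparing the outcome with a single application at the product $\g\g'$. First I would abbreviate $G = F \md_\gl \g$ and expand $(G \md_\gl \g')(z,X)$ according to \eqref{E:js}: this produces the outer factor $\fJ(\g',z)^{-\gl}$ and evaluates $G$ at the point $\g'z$, with the polynomial variable $Y := \fJ(\g',z)^2\bigl(X - \fK(\g',z)\bigr)$ inserted in place of $X$. Substituting the definition of $G$ into this expression then yields a second automorphy factor $\fJ(\g,\g'z)^{-\gl}$ and evaluates $F$ at $\g(\g'z) = (\g\g')z$, now with polynomial variable $\fJ(\g,\g'z)^2\bigl(Y - \fK(\g,\g'z)\bigr)$.

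Two elementary identities then reconcile this with $(F \md_\gl(\g\g'))(z,X)$. The scalar prefactor $\fJ(\g',z)^{-\gl}\,\fJ(\g,\g'z)^{-\gl}$ collapses to $\fJ(\g\g',z)^{-\gl}$ by the cocycle condition \eqref{E:kk}, which matches the prefactor appearing in $F \md_\gl(\g\g')$, and the base point $(\g\g')z$ already agrees. For the polynomial variable I would substitute the value of $Y$ and expand $\fJ(\g,\g'z)^2\bigl(Y - \fK(\g,\g'z)\bigr)$: the coefficient of $X$ is $\fJ(\g,\g'z)^2\fJ(\g',z)^2 = \fJ(\g\g',z)^2$, again by \eqref{E:kk}, so what remains is to identify the constant term with $-\fJ(\g\g',z)^2 \fK(\g\g',z)$.

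This last identification is the only step that uses the special behavior of $\fK$. After factoring out $\fJ(\g\g',z)^2$ and using $\fJ(\g\g',z)^2 = \fJ(\g,\g'z)^2\fJ(\g',z)^2$, the required equality reduces precisely to $\fK(\g',z) + \fJ(\g',z)^{-2}\fK(\g,\g'z) = \fK(\g\g',z)$, which is exactly the relation \eqref{E:5k}. Hence the affine shift in the $X$-variable transforms as required, and the two polynomials coincide. I expect the main obstacle to be purely organizational, namely keeping the nested substitutions and the two occurrences of the quadratic automorphy factor straight; once \eqref{E:kk} handles the $\fJ$-factors and \eqref{E:5k} is recognized inside the affine shift, no further computation is needed.
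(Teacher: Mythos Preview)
Your proposal is correct and follows essentially the same approach as the paper's proof: apply the definition \eqref{E:js} twice, then use the cocycle relation \eqref{E:kk} to collapse the $\fJ$-factors and the relation \eqref{E:5k} to identify the affine shift in the polynomial variable with $\fJ(\g\g',z)^2(X-\fK(\g\g',z))$. The paper's write-up differs only cosmetically, computing the transformed variable $X'$ in one step and then simplifying, rather than separating the coefficient of $X$ from the constant term as you do.
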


\begin{proof}
Given $\g , \g' \in SL(2, \bR)$, $\gl \in \bZ$ and $F (z,X) \in \mF_m
[X]$, using \eqref{E:js}, we have
\begin{align} \label{E:uk}
((F \md_\gl \g) \md_\gl \g') (z, X) &= \fJ (\g', z)^{-\gl} (F \md_\gl \g)
(\g' z,
\fJ (\g', z)^2 ( X - \fK (\g', z)))\\
&= \fJ (\g', z)^{-\gl} \fJ (\g, \g' z)^{-\gl} F (\g \g' z, X' ), \notag
\end{align}
where%
\[ X' = \fJ (\g, \g' z)^2 \fJ (\g', z)^2 ( X - \fK (\g', z)) -  \fJ (\g,
\g' z)^2 \fK (\g, \g' z) .\]
Thus using \eqref{E:kk} and \eqref{E:5k}, we see that
\begin{align*}
X' &= \fJ (\g \g', z)^2 (X - \fK (\g', z)) - \fJ (\g, \g' z)^2 \fJ (\g',
z)^2 (\fK (\g \g', z) - \fK (\g', z))\\
&= \fJ (\g \g', z)^2 (X - \fK (\g \g', z)).
\end{align*}
From this and \eqref{E:uk}, we obtain%
\begin{align*}
((F \md_\gl \g) \md_\gl \g') (z, X) &= \fJ (\g \g', z)^{-\gl} F (\g \g' z,
\fJ (\g
\g', z)^2 (X - \fK (\g \g', z))\\
&= (F \md_\gl (\g \g')) (z, X);
\end{align*}
hence the lemma follows.
\end{proof}

If $\Phi (z,X) \in \mF [[X]]_\gd$ is as in \eqref{E:tf} and if $m$
is a nonnegative integer, we set
\begin{equation} \label{E:mc}
(\Pi^\gd_m \Phi) (z, X) = \sum^m_{r =0} \frac{1}{r!} \phi_{m-r} (z) X^r,
\end{equation}
which determines the surjective complex linear map
\[ \Pi^\gd_m: \mF [[X]]_\gd \to \mF_m [X] .\]
In particular, for $m=0$ we obtain the map
\[ \Pi_0^\gd: \mF [[X]]_\gd \to \mF \]
given by
\[ (\Pi_0^\gd \Phi) (z) = \phi_0 (z) .\]
We also define the linear maps
\begin{equation} \label{E:v6}
\quad \fS_\ell: \mF_m [X] \to \mF
\end{equation}
for $0 \leq \ell \leq m$ by setting
\begin{equation} \label{E:sy}
(\fS_\ell F) (z) = f_\ell (z)
\end{equation}
for $\Phi (z,X) \in \mF [[X]]_\gd$ as above and $F (z,X) = \sum_{k=0}^m
f_k (z) X^k \in \mF_m [X]$.

\begin{lem}
Given $m, \gd \geq 0$, the diagram
\begin{equation} \label{E:dd}
\begin{CD}
0 @>>> \mF [[X]]_{\gd +1} @> \wh{\iota} >> \mF [[X]]_\gd @> \wh{\fS}^\gd
>>
\mF @>>> 0\\
@. @VV \Pi^{\gd+1}_{m -1} V @VV \Pi^\gd_m V @VV \mu_m V @.\\
0 @>>> \mF_{m-1} [X] @> \iota >> \mF_m [X] @> \fS_m >> \mF @>>> 0
\end{CD}
\end{equation}
commutes, where $\wh{\iota}$ and $\iota$ are inclusion maps and $\mu_m$ is
multiplication by $(1/m!)$. Furthermore, the two rows in the diagram are
short exact sequences.
\end{lem}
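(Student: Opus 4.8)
The plan is to make every map completely explicit in terms of coefficients and then verify the two square-commutativities and the two exactness statements by direct bookkeeping. First I would fix coordinates by writing a general element of $\mF [[X]]_\gd$ as $\Phi (z,X) = \sum_{k \geq 0} \phi_k (z) X^{k+\gd}$ as in \eqref{E:tf}, so that by \eqref{E:mc} the map $\Pi^\gd_m$ records the coefficients $\phi_0, \ldots, \phi_m$ with the normalization $1/r!$ attached to $X^r$, while the top connecting map $\wh{\fS}^\gd$ extracts $\phi_0$ (so it coincides with $\Pi_0^\gd$), and the bottom map $\fS_m$ extracts the leading coefficient $f_m$ by \eqref{E:sy}. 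The one map that requires care is $\wh\iota$: an element $\Psi (z,X) = \sum_{k \geq 0} \psi_k (z) X^{k+\gd+1}$ of $\mF [[X]]_{\gd+1}$, when viewed inside $\mF [[X]]_\gd$, has coefficients $\phi_0 = 0$ and $\phi_{k+1} = \psi_k$ for $k \geq 0$.

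For commutativity I would treat the two squares separately. In the left square, applying $\Pi^\gd_m \circ \wh\iota$ to $\Psi$ as above gives $\sum_{r=0}^m \tfrac{1}{r!}\phi_{m-r}X^r$; the top term $r=m$ vanishes because $\phi_0 = 0$, and for $r < m$ one has $\phi_{m-r} = \psi_{(m-1)-r}$, so the result is exactly $\iota \circ \Pi^{\gd+1}_{m-1}$ applied to $\Psi$. In the right square, the coefficient of $X^m$ in $\Pi^\gd_m \Phi$ is $\tfrac{1}{m!}\phi_0$, so $\fS_m \circ \Pi^\gd_m$ sends $\Phi$ to $\tfrac{1}{m!}\phi_0$, which is precisely $\mu_m$ applied to $\wh{\fS}^\gd \Phi = \phi_0$. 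This establishes that the diagram commutes.

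For exactness of the bottom row, $\iota$ is injective as an inclusion, $\fS_m$ is surjective since $\fS_m (g X^m) = g$ for any $g \in \mF$, and $\ker \fS_m$ consists of the polynomials with vanishing leading coefficient, that is, those of degree at most $m-1$, which is exactly $\im \iota$. The top row is handled in the same spirit: $\wh\iota$ is injective, $\wh{\fS}^\gd$ is surjective because $\wh{\fS}^\gd (g X^\gd) = g$, and by the coefficient description above $\ker \wh{\fS}^\gd = \{ \Phi : \phi_0 = 0 \} = \im \wh\iota$.

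The computations are all elementary; the only point that needs genuine attention is the bookkeeping in the left square, where the degree shift $\gd \mapsto \gd+1$ on the power-series side must be matched against the drop $m \mapsto m-1$ in polynomial degree together with the factorial normalizations $1/r!$. Once the reindexing $\phi_{k+1} = \psi_k$ is in hand everything falls into place, so I expect no serious obstacle beyond careful index tracking.
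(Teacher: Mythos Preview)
Your proposal is correct and follows essentially the same route as the paper: write everything in coefficients, use the reindexing $\phi_0=0$, $\phi_{k+1}=\psi_k$ under $\wh\iota$ to handle the left square, and read off $\tfrac{1}{m!}\phi_0$ for the right square. The only difference is that the paper dismisses exactness of the two rows as clear, whereas you spell it out; that extra detail is fine and changes nothing substantive.
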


\begin{proof}
Let $\Phi (z,X) \in \mF [[X]]_\gd$  and $\Psi (z,X) \in \mF [[X]]_{\gd
+1}$ be given by
\[ \Phi (z,X) = \sum_{k=0}^{\infty} \phi_k(z) X^{k +\delta}, \quad
\Psi (z,X) = \sum_{k=0}^{\infty} \psi_k(z) X^{k +\delta +1} .\]
Then from \eqref{E:mc} and \eqref{E:sy} we see that
\[ ((\fS_m \circ \Pi^\gd_m) \Phi) (z) = \phi_0 (z) / m! = (\mu_m \phi_0)
(z) .\]
Since $\phi_0 = \wh{\fS}^\gd \Phi$ by \eqref{E:sy}, we obtain
\[ \fS_m \circ \Pi^\gd_m = \mu_m \circ \wh{\fS}^\gd .\]
On the other hand, we have
\[ (\wh{\iota} \Psi) (z,X) = \sum^\infty_{k=0} \wh{\psi}_k (z) X^{k + \gd}
\]
with $\wh{\psi}_0 =0$ and $\wh{\psi}_k = \psi_{k-1}$ for $1 \leq k \leq
m$.  Hence we see that
\begin{align*}
((\Pi^\gd_m \circ \wh{\iota}) \Psi) (z, X) &= \sum^m_{r =0} \frac{1}{r!}
\wh{\psi}_{m-r} (z) X^r = \sum^{m -1}_{r =0} \frac{1}{r!} \psi_{m-1 -r}
(z) X^r\\
&= (\Pi^{\gd+1}_{m -1} \Psi) (z, X) = ((\iota \circ \Pi^{\gd+1}_{m -1})
\Psi) (z, X) ,
\end{align*}
which shows that $\Pi^\gd_m \circ \wh{\iota} = \iota \circ \Pi^{\gd+1}_{m
-1}$.  The two rows in the diagram are clearly short exact sequences.
\end{proof}

The next proposition shows that the surjective map $\Pi^\gd_m: \mF
[[X]]_\gd \to \mF_m [X]$ is equivariant with respect to the right
actions of $SL(2, \bR)$ of the type described earlier.

\begin{prop} \label{P:nk}
If $m$, $\gd$ and $\gl$ are integers with $m, \gd \geq 0$, we have
\begin{equation} \label{E:pv}
\Pi^\gd_m (\Phi \mid^J_\gl \g) = \Pi^\gd_m (\Phi )\md_{\gl +2m +2\gd} \g
\end{equation}
for all $\Phi (z,X) \in \mF [[X]]_\gd$ and $\g \in SL(2, \bR)$.
\end{prop}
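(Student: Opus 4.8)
The plan is to compute both sides of \eqref{E:pv} explicitly as polynomials in $X$ and compare the coefficients of each power $X^r$, $0 \le r \le m$. Throughout I abbreviate $J = \fJ(\g, z)$ and $K = \fK(\g, z)$.

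First I would unwind the left-hand side. Writing $\Phi(z,X) = \sum_{k \ge 0} \phi_k(z) X^{k+\gd}$ as in \eqref{E:tf} and expanding the exponential $e^{-KX} = \sum_{j \ge 0} \frac{(-K)^j}{j!} X^j$ together with the rescaling $X \mapsto J^{-2} X$ in the definition \eqref{E:yt} of $\mid^J_\gl$, I would collect the terms sharing a common power of $X$ to obtain
\[ (\Phi \mid^J_\gl \g)(z,X) = \sum_{n \ge 0} \wt\phi_n(z) X^{n+\gd}, \qquad \wt\phi_n = J^{-\gl} \sum_{k=0}^n \frac{(-K)^{n-k}}{(n-k)!} J^{-2(k+\gd)} \phi_k(\g z). \]
Applying $\Pi^\gd_m$ from \eqref{E:mc} then reads off the coefficient of $X^r$ on the left as $\frac{1}{r!}\wt\phi_{m-r}$.

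Next I would expand the right-hand side. With $F = \Pi^\gd_m \Phi = \sum_{s=0}^m \frac{1}{s!}\phi_{m-s} X^s$, I substitute into \eqref{E:js} with weight $\gl+2m+2\gd$ and apply the binomial theorem to $(X-K)^s$. Collecting the coefficient of $X^r$ (a sum over $s \ge r$) and then reindexing by $k = m-s$ produces a sum over $0 \le k \le m-r$ in which the combined power of $J$ is $J^{-(\gl+2m+2\gd)} J^{2(m-k)} = J^{-\gl} J^{-2(k+\gd)}$. Since $m-s-r = m-k-r$ matches the factorial and the power of $(-K)$ appearing in $\wt\phi_{m-r}$, a term-by-term comparison shows the two coefficients of $X^r$ coincide, which gives \eqref{E:pv}.

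The step deserving the most care is the index bookkeeping together with the appearance of the exact weight shift $2m + 2\gd$. The map $\Pi^\gd_m$ reverses the role of high- and low-degree coefficients: the power-series coefficient $\phi_k$, which carries the factor $J^{-2(k+\gd)}$ under $\mid^J_\gl$, is relocated in the polynomial picture to the slot of $X^{m-k}$, which is scaled by $J^{2(m-k)}$ under $\md$. The shift $\gl \mapsto \gl + 2m + 2\gd$ is precisely what balances these two homogeneities so that every factor of $J$ cancels after the reindexing $k = m-s$; any other choice of weight would leave a residual power of $J$ and destroy the equivariance. I expect the computation itself to be routine once the exponential is expanded, so the only real content is tracking these indices and recognizing why the weight must increase by exactly $2(m+\gd)$.
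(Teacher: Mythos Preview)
Your proposal is correct and follows essentially the same route as the paper: both sides are expanded explicitly, the exponential and binomial are unfolded, and a single reindexing (your $k=m-s$, the paper's $\ell=j-r$) identifies the coefficients of $X^r$. Your added remark explaining why the weight shift must be exactly $2(m+\gd)$ is a nice clarification but not a different argument.
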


\begin{proof}
Let $\Phi (z,X) \in \mF [[X]]_\gd$ be as in \eqref{E:tf}, and let $\g \in
SL(2, \bR)$.  Then, using \eqref{E:yt}, we have
\begin{align*}
(\Phi \mid^J_\gl \g) (z, X) &= \sum^\infty_{k =0} \fJ (\g, z)^{-\gl}
\sum^\infty_{\ell =0} \frac {(-1)^\ell} {\ell !} \fK (\g, z)^\ell X^\ell
\phi_k (\g z) \fJ (\g, z)^{-2k -2\gd} X^{k +\gd}\\
&= \sum^\infty_{k =0} \sum^\infty_{\ell =0} \frac {(-1)^\ell} {\ell !} \fJ
(\g, z)^{-2k -2\gd -\gl} \fK (\g, z)^\ell \phi_k (\g z) X^{k +\ell +\gd}\\
&= \sum^\infty_{n =0} \wt{\phi}_n (z, X) X^{n +\gd}
\end{align*}
with
\[ \wt{\phi}_n (z, X) = \sum^n_{\ell =0} \frac {(-1)^\ell} {\ell !} \fJ
(\g, z)^{-2n +2\ell -2\gd -\gl} \fK (\g, z)^\ell \phi_{n -\ell}
(\g z) .\] From this and \eqref{E:mc} we see that
\begin{align} \label{E:yw}
(\Pi^\gd_m (\Phi \mid^J_\gl \g)) (z, X) &= \sum^m_{r =0} \frac 1{r!}
\wt{\phi}_{m-r} (z) X^r\\
&= \sum^m_{r =0} \sum^{m-r}_{\ell =0} \frac {(-1)^\ell} {r! \ell !} \fJ
(\g, z)^{2\ell +2r -2m -2\gd -\gl} \fK (\g, z)^\ell \phi_{m -\ell -r} (\g
z) X^r. \notag
\end{align}
On the other hand, using \eqref{E:js} and \eqref{E:mc}, we have
\begin{align*}
((\Pi^\gd_m \Phi) \md_{\gl +2m +2\gd} \g) (z, X) &= \fJ (\g, z)^{-\gl -2m
-2\gd} (\Pi^\gd_m \Phi) (\g z, \fJ (\g, z)^2 ( X - \fK (\g, z)))\\
&= \fJ (\g, z)^{-\gl -2m -2\gd} \sum^m_{j =0} \frac 1{j !} \phi_{m
-j} (\g z) \fJ (\g, z)^{2j} (X - \fK (\g, z))^j\\
&= \sum^m_{j =0} \sum^j_{r =0} \frac 1{j !} \bi {j}r
\phi_{m -j} (\g z) \fJ (\g, z)^{2j -\gl -2m -2\gd}\\
&\hspace{1.8in} \times (-1)^{j -r} \fK (\g, z)^{j -r} X^r\\
&= \sum^m_{r =0} \sum^m_{j =r} \frac {(-1)^{j -r}}{r! (j -r) !} \fJ (\g,
z)^{2j -\gl -2m -2\gd} \fK (\g, z)^{j -r} \phi_{m -j} (\g z) X^r.
\end{align*}
Changing the index for the second summation in the previous line from $j$
to $\ell = j-r$ and comparing this with \eqref{E:yw}, we obtain the
relation \eqref{E:pv}.
\end{proof}

\section{\bf{Quasimodular forms}} \label{S:qf}

Jacobi-like forms for $\G$ are formal power series, whose coefficients are
holomorphic functions on the Poincar\'e upper half plane $\mH$, invariant
under a certain right action of $\G$ (see \cite{CM97}, \cite{Za94}).

In this section we describe quasimodular forms for a discrete
subgroup $\G$ of $SL(2, \bR)$ introduced by Kaneko and Zagier (see
\cite{KZ95}) and study some of their properties.  We introduce
quasimodular polynomials, which are polynomials with holomorphic
coefficients invariant under a right action of $\G$ and can be
identified with quasimodular forms.  We also consider
$\G$-invariant formal power series known as Jacobi-like forms
(cf.\ \cite{CM97}, \cite{Za94})  and discuss their connections
with quasimodular polynomials.

Let $\mF$ be the ring of holomorphic functions on $\mH$
as in Section \ref{S:pp}, and let $\G$ be a discrete subgroup of
$SL(2, \bR)$.  We also fix a nonnegative integer $m$.

\begin{dfn} \label{D:mq}
Let $\xi$ be an integer, and let $\mid_\xi$ be the operation in \eqref{E:xt}.

(i) An element $f \in \mF$ is a {\em modular form for $\G$ of weight $\xi$\/}
if it satisfies
\[
f \mid_\xi \g = f
\]
for all $\g \in \G$.  We denote by $M_\xi (\G)$ the space of modular forms
for $\G$ of weight $\xi$.

(ii) An element $f \in \mF$ is a {\em quasimodular form for $\G$ of weight
  $\xi$ and depth at most $m$\/} if there are functions $f_0, \ldots, f_m \in
\mF$ such that
\begin{equation} \label{E:qq1}
(f \mid_\xi \g) (z) = \sum^m_{r=0} f_r (z) \fK (\g, z)^r
\end{equation}
for all $z \in \mH$ and $\g \in \G$, where $\fK (\g, z)$ is as in
\eqref{E:kz}.  We denote by $QM^m_\xi (\G)$ the space of quasimodular forms
for $\G$ of weight $\xi$ and depth at most $m$.
\end{dfn}

\begin{rem} \label{R:gw}
(i) Although the usual definition of modular forms or quasimodular forms
includes a cusp condition, we have suppressed such conditions in Definition
\ref{D:mq}.

(ii) If we set $\g \in \G$ in \eqref{E:qq1} to be the identity
matrix, then $\fK (\g, z) =0$; hence it follows that
\[ f = f_0 .\]
If $m=0$, from \eqref{E:qq1} we obtain
\[ f \mid_\xi \g = f_0 = f ,\]
and therefore we have
\[ QM^0_\xi (\G) = M_\xi (\G) .\]

(iii) For fixed $z \in \mH$, by considering the right hand side of
\eqref{E:qq1} as a polynomial in $\fK (\g, z)$ and using the fact
that it is valid for all $\g$ in the infinite set $\G$, we see
that $f$ determines the coefficients $f_0, \ldots, f_m$ uniquely.
\end{rem}

Let $f \in \mF$ be a quasimodular form belonging to $QM^m_\xi
(\G)$ satisfying \eqref{E:qq1}.  Then we define the corresponding
polynomial $(\mQ_\xi^m f) (z,X) \in \mF_m [X]$ by
\begin{equation} \label{E:tp}
(\mQ_\xi^m f) (z,X) = \sum^m_{r=0} f_r (z) X^r
\end{equation}
for $z \in \mH$.  We note that $\mQ_\xi^m f$ is well-defined due to Remark
\ref{R:gw}(iii).  Thus we obtain the linear map
\[ \mQ_\xi^m: QM^m_\xi (\G) \to \mF_m [X] \]
for each $\xi \in \bZ$.

\begin{dfn} \label{D:8d}
A {\em quasimodular polynomial for $\G$ of weight $\xi$ and degree at most
  $m$\/} is an element of $\mF_m [X]$ that is invariant with respect to the
right $\Gamma$-action in \eqref{E:js}.  We denote by
\[ QP^m_{\xi} (\G) =\{ F (z,X)\in \mF_m [X]  \mid  F\md_{\xi}\g =F
\; \text{for all} \; \g \in \G \} \]
the space of all quasimodular polynomials for $\G$ of weight $\xi$ and degree
at most $m$, where $\md_\xi$ is as in \eqref{E:js}.
\end{dfn}

\begin{lem}
Given $F(z,X) = \sum^m_{r=0} f_r (z) X^r \in \mF_m [X]$ and $\g \in SL(2,
\bR)$, we have
\begin{equation} \label{E:rf1}
\fS_r(F \md_{\xi}\g) (z)= \sum^m_{\ell =r} (-1)^{\ell-r} \bi {\ell}r f_\ell
(\g z) \fJ (\g, z)^{2\ell -\xi} \fK (\g,z)^{\ell-r} ,
\end{equation}
\begin{equation} \label{E:gw}
(\fS_r (F \md_{\xi} \g^{-1}) \mid_{\xi -2r} \g) (z) = \sum^m_{\ell = r} \bi
{\ell}r f_\ell (z) \fK (\g,z)^{\ell -r}
\end{equation}
for $0 \leq r \leq m$.
\end{lem}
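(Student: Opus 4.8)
The plan is to treat the two identities separately: \eqref{E:rf1} is a direct computation from the definition of the action, and \eqref{E:gw} is then deduced from it by applying the cocycle relations \eqref{E:kk} and \eqref{E:5k}.

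For \eqref{E:rf1}, I would substitute $F(z,X) = \sum_{k=0}^m f_k(z) X^k$ into the definition \eqref{E:js}, obtaining
\[ (F \md_\xi \g)(z,X) = \fJ(\g,z)^{-\xi} \sum_{k=0}^m f_k(\g z) \fJ(\g,z)^{2k} (X - \fK(\g,z))^k. \]
Expanding $(X - \fK(\g,z))^k$ by the binomial theorem and reading off the coefficient of $X^r$—that is, applying $\fS_r$—gives exactly the right-hand side of \eqref{E:rf1} after renaming the summation index $k$ to $\ell$. This step is entirely routine.

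For \eqref{E:gw}, I would first invoke \eqref{E:rf1} with $\g$ replaced by $\g^{-1}$ to obtain a formula for $\fS_r(F \md_\xi \g^{-1})$, and then act by $\mid_{\xi-2r}\g$, which by \eqref{E:xt} amounts to evaluating at $\g z$ and multiplying by $\fJ(\g,z)^{-(\xi-2r)}$. The two essential ingredients are the inverse relations $\fJ(\g^{-1},\g z) = \fJ(\g,z)^{-1}$ and $\fK(\g^{-1},\g z) = -\fJ(\g,z)^2 \fK(\g,z)$. The first follows from \eqref{E:kk} applied to $\g^{-1}\g = \mathrm{id}$, using $\fJ(\mathrm{id},z) = 1$; the second follows from \eqref{E:5k} applied to $\g^{-1}\g = \mathrm{id}$, using $\fK(\mathrm{id},z) = 0$. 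Together with $\g^{-1}(\g z) = z$, these collapse $f_\ell(\g^{-1}\g z)$ to $f_\ell(z)$.

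The main obstacle—indeed the only place where care is needed—is the bookkeeping of the powers of $\fJ(\g,z)$ and the signs after substitution. Tracking them shows that the two factors of $(-1)^{\ell-r}$, one inherited from \eqref{E:rf1} and one coming from $\fK(\g^{-1},\g z)^{\ell-r}$, cancel, and that the powers $\fJ(\g,z)^{\xi-2\ell}$ from $\fJ(\g^{-1},\g z)^{2\ell-\xi}$, the $\fJ(\g,z)^{2(\ell-r)}$ from $\fK(\g^{-1},\g z)^{\ell-r}$, and the overall $\fJ(\g,z)^{-(\xi-2r)}$ from $\mid_{\xi-2r}\g$ combine to $\fJ(\g,z)^0 = 1$. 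What survives is precisely
\[ \sum_{\ell=r}^m \binom{\ell}{r} f_\ell(z)\, \fK(\g,z)^{\ell-r}, \]
which is \eqref{E:gw}.
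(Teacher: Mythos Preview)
Your proposal is correct and follows essentially the same route as the paper: expand the action via the binomial theorem to obtain \eqref{E:rf1}, then for \eqref{E:gw} substitute $\g\mapsto\g^{-1}$, evaluate at $\g z$, and use the inverse identities $\fJ(\g^{-1},\g z)=\fJ(\g,z)^{-1}$ and $\fK(\g^{-1},\g z)=-\fJ(\g,z)^{2}\fK(\g,z)$ (the paper records these as \eqref{E:jp}). Your sign and exponent bookkeeping is accurate.
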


\begin{proof}
Using \eqref{E:js}, we have
\begin{align*}
(F \md_\xi \g) (z,X) &= \fJ (\g,z)^{-\xi} \sum^m_{\ell=0} f_\ell
(\g z) \fJ (\g,
z)^{2\ell} (X - \fK (\g,z))^\ell\\
&= \sum^m_{\ell=0} \sum^\ell_{r=0} \bi {\ell}r f_\ell (\g z) \fJ (\g,
z)^{2\ell -\xi}
(-1)^{\ell-r} \fK (\g,z)^{\ell-r} X^r\\
&= \sum^m_{r=0} \sum^m_{\ell=r} (-1)^{\ell-r} \bi {\ell}r f_\ell (\g z)
\fJ (\g, z)^{2\ell -\xi} \fK (\g,z)^{\ell-r} X^r ,
\end{align*}
which verifies \eqref{E:rf1}.  Replacing $\g$ by $\g^{-1}$ and $z$ by $\g
z$ in \eqref{E:rf1}, we have
\[ \fS_r (F \md_{\xi} \g^{-1}) (\g z) = \fJ (\g^{-1}, \g z)^{-\xi} \sum^m_{\ell =r} (-1)^{\ell-r}
\bi {\ell}r f_\ell (z) \fJ (\g^{-1}, \g z)^{2\ell}
\fK (\g^{-1}, \g z)^{\ell-r} .\]
However, from \eqref{E:kk} and \eqref{E:5k} and the fact that $\fJ (1,z)
=1$ and $\fK (1,z) =0$, we see that
\begin{equation} \label{E:jp}
\fJ (\g^{-1}, \g z) = \fJ (\g, z)^{-1}, \quad \fK (\g^{-1}, \g z) = - \fJ
(\g, z)^{2} \fK (\g, z) ,
\end{equation}
and therefore it follows that
\begin{align*}
\fS_r (F \md_{\xi} \g^{-1}) (\g z) &= \fJ (\g, z)^{\xi} \sum^m_{\ell =r}
(-1)^{\ell-r} \bi {\ell}r f_\ell (z) \fJ (\g,
z)^{-2\ell} (-1)^{\ell-r} \fJ (\g, z)^{2\ell -2r} \fK (\g, z)^{\ell-r}\\
&= \fJ (\g, z)^{\xi -2r} \sum^m_{\ell =r} \bi {\ell}r f_\ell (z) \fK (\g,
z)^{\ell-r} ;
\end{align*}
hence we obtain \eqref{E:gw}.
\end{proof}

By setting $r=0$ in \eqref{E:rf1} we obtain
\begin{equation} \label{E:44}
\fS_0 ( F \md_\xi \g) (z) =  \sum^m_{\ell=0} (-1)^\ell \fJ (\g, z)^{2\ell
  -\xi} \fK (\g, z)^\ell f_\ell (\g z) ,
\end{equation}
\[ \fS_0 ( F \md_\xi \g^{-1}) (\g z) = \fJ (\g,z)^\xi \sum^m_{r=0} f_r (z)
\fK (\g, z)^r , \]
where we used \eqref{E:jp}. In particular, if $F(z, X) \in QP^m_\xi (\G)$,
we have
\[ (\fS_0  F) (z) =  \sum^m_{\ell=0} (-1)^\ell \fJ (\g, z)^{2\ell -\xi}
\fK (\g, z)^\ell f_\ell (\g z) \]
for all $\g \in \G$.

\begin{cor} \label{C:mh}
Let $F (z,X) = \sum^m_{r=0} f_r (z) X^r \in \mF_m [X]$.
Then $F(z,X)$ is a quasimodular polynomial belonging to $QP^m_{\xi} (\G)$
if and only if for each $r \in \{ 0, 1, \ldots, m \}$ the coefficient $f_r$
satisfies
\begin{equation} \label{E:ep}
(f_r \mid_{\xi -2r} \g) (z) = \sum^m_{\ell = r} \bi {\ell}r f_\ell (z) \fK
(\g,z)^{\ell -r} = \sum^{m-r}_{\ell = 0} \bi {\ell +r}r f_{\ell +r} (z)
\fK (\g,z)^\ell
\end{equation}
for all $z \in \mH$ and $\g \in \G$.  In particular, $f_r$ is a
quasimodular form belonging to $QM^{m-r}_{\xi -2r} (\G)$.
\end{cor}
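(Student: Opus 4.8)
The plan is to derive both implications directly from the identity \eqref{E:gw} already established in the lemma (which in turn rests on \eqref{E:rf1}), using only two structural facts about $\G$: that it is closed under inversion, and that for each weight the slash operation $\mid$ is a genuine right action of $SL(2,\bR)$ and hence a bijection of $\mF$, with $\mid_{\xi-2r}\g$ inverted by $\mid_{\xi-2r}\g^{-1}$. Throughout I will use that $\fS_r F = f_r$ by the definition \eqref{E:sy} of $\fS_r$, and that $F\md_\xi\g = \sum_{r=0}^m \fS_r(F\md_\xi\g)\,X^r$ since the right $\G$-action \eqref{E:js} preserves $\mF_m[X]$.

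For the forward implication I would argue as follows. Suppose $F \in QP^m_\xi(\G)$, so $F \md_\xi \g = F$ for every $\g \in \G$. Fixing $\g \in \G$ and using that $\g^{-1} \in \G$ as well, one gets $F \md_\xi \g^{-1} = F$ and therefore $\fS_r(F \md_\xi \g^{-1}) = \fS_r F = f_r$ for each $r$. Substituting this into the left-hand side of \eqref{E:gw} turns that identity into precisely the asserted equation \eqref{E:ep}. For the converse I would assume \eqref{E:ep} holds for all $r$ and all $\g \in \G$, read it as $(f_r \mid_{\xi-2r}\g)(z) = \sum_{\ell=r}^m \bi{\ell}r f_\ell(z)\fK(\g,z)^{\ell-r}$, and compare with \eqref{E:gw} to obtain $(\fS_r(F\md_\xi\g^{-1}) \mid_{\xi-2r}\g)(z) = (f_r \mid_{\xi-2r}\g)(z)$ for every $\g \in \G$. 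Cancelling the invertible operation $\mid_{\xi-2r}\g$ yields $\fS_r(F \md_\xi \g^{-1}) = f_r$; since $\g^{-1}$ ranges over all of $\G$ as $\g$ does, this gives $\fS_r(F\md_\xi\g) = f_r$ for every $\g \in \G$ and every $r$, whence $F \md_\xi \g = \sum_{r=0}^m f_r X^r = F$ and $F \in QP^m_\xi(\G)$.

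For the final assertion I would reindex the right-hand side of \eqref{E:ep} by $\ell \mapsto \ell + r$, exactly as already displayed in the statement, to write $(f_r \mid_{\xi-2r}\g)(z) = \sum_{\ell=0}^{m-r} \bi{\ell+r}r f_{\ell+r}(z)\fK(\g,z)^\ell$. Taking the associated functions to be $\bi{\ell+r}r f_{\ell+r}$ for $0 \le \ell \le m-r$ exhibits $f_r$ in exactly the shape required by Definition \ref{D:mq}(ii) for weight $\xi-2r$ and depth at most $m-r$, so $f_r \in QM^{m-r}_{\xi-2r}(\G)$.

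Since all of the computational content is already packaged in the preceding lemma, the one point that demands care is the cancellation step in the converse: it is legitimate precisely because each $\mid_{\xi-2r}\g$ is a bijection of $\mF$, and the passage back from $\g^{-1}$ to $\g$ uses the symmetry $\g \leftrightarrow \g^{-1}$ within $\G$. This bookkeeping with inverses is the only place where the group structure of $\G$ (beyond mere closure under the action) is genuinely invoked, and I expect it to be the sole conceptual obstacle.
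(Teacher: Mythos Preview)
Your proof is correct and follows essentially the same approach as the paper: both reduce the equivalence to the definition $F = F\md_\xi\g$ together with the identity \eqref{E:gw}. The paper's own proof is a two-line sketch that leaves the converse direction implicit, whereas you have spelled out carefully why the cancellation of $\mid_{\xi-2r}\g$ is legitimate and how the $\g\leftrightarrow\g^{-1}$ symmetry closes the argument; this is exactly the content hidden behind the paper's terse ``Hence \eqref{E:ep} follows from this and \eqref{E:gw}.''
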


\begin{proof}
A polynomial $F (z,X) \in \mF_m [X]$ belongs to $QP^m_{\xi} (\G)$ if an
only if
\[ F (z,X) = (F \md_\xi \g) (z,X) \]
for all $\g \in \G$.  Hence \eqref{E:ep} follows from this and
\eqref{E:gw}.
\end{proof}

If $0 \leq \ell \leq m$, we obtain the complex linear map
\[ \fS_\ell: QP^m_\xi (\G) \to \mF \]
by restricting $\fS_\ell$ in \eqref{E:v6} to $QP^m_\xi (\G)$.  Then $\fS_r
F$ belongs to $QM^{m-r}_{\xi -2r} (\G)$ for $0 \leq r \leq m$, and
\eqref{E:ep} can be written in the form
\[ (\fS_r F \mid_{\xi -2r} \g) (z) = \sum^{m-r}_{\ell = 0} \bi {\ell +r}r
(\fS_{\ell +r} F) (z) \fK (\g,z)^\ell \]
for $F(z,X) \in QP^m_{\xi} (\G)$.  In particular, we obtain
\[ \fS_m F \mid_{\xi -2m} \g = \fS_m F \]
for all $\g \in \G$.  Thus it follows that
\begin{equation} \label{E:hh}
\fS_m F \in M_{\xi -2m} (\G)
\end{equation}
if $F (z,X) \in QP^m_{\xi} (\G)$.  On the other hand, we also have
\[ (\mQ^{m-r}_{\xi -2r} (\fS_r F)) (z,X) = \sum^{m-r}_{\ell = 0}
\bi {\ell +r}r (\fS_{\ell +r} F) (z) X^\ell \in QP^{m-r}_{\xi -2r} (\G) ,\]
where $\mQ^{m-r}_{\xi -2r}$ is as in \eqref{E:tp}.  Thus, in particular,
we see that the map $\mQ^m_\xi$ given by \eqref{E:tp} determines the
complex linear map
\begin{equation} \label{E:sp}
\mQ_\xi^m: QM^m_\xi (\G) \to QP^m_{\xi} (\G) ,
\end{equation}
so that $\mQ_\xi^m$ carries quasimodular forms to quasimodular
polynomials.  In fact, the the next proposition shows that it is an
isomorphism.

\begin{prop} \label{P:kw}
The restriction of the map $\fS_\ell$ in \eqref{E:v6} with $\ell =0$ to
$QP^m_{\xi} (\G) $ determines an isomorphism%
\begin{equation} \label{E:37}
\fS_0: QP^m_{\xi} (\G) \to QM^m_\xi (\G)
\end{equation}
whose inverse is the map $\mQ^m_\xi$ in \eqref{E:sp}.
\end{prop}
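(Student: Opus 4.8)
The plan is to verify that $\fS_0$ and $\mQ^m_\xi$ are mutually inverse, drawing entirely on the structural facts already assembled. Both maps are known to be well defined: the discussion following Corollary \ref{C:mh} shows (taking $r=0$) that $\fS_0$ carries $QP^m_\xi(\G)$ into $QM^m_\xi(\G)$, while \eqref{E:sp} shows that $\mQ^m_\xi$ carries $QM^m_\xi(\G)$ into $QP^m_\xi(\G)$. It therefore remains only to check the two composition identities $\fS_0\circ\mQ^m_\xi=\operatorname{id}$ and $\mQ^m_\xi\circ\fS_0=\operatorname{id}$.

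First I would dispose of the easy composition. Given $f\in QM^m_\xi(\G)$ with associated functions $f_0,\ldots,f_m$ as in \eqref{E:qq1}, the polynomial $\mQ^m_\xi f$ has constant term $f_0$, so $\fS_0(\mQ^m_\xi f)=f_0$. By Remark \ref{R:gw}(ii), setting $\g$ equal to the identity in \eqref{E:qq1} forces $f=f_0$; hence $\fS_0\circ\mQ^m_\xi=\operatorname{id}$ on $QM^m_\xi(\G)$.

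The substantive step is the reverse composition. Let $F(z,X)=\sum_{r=0}^m f_r(z)X^r\in QP^m_\xi(\G)$, so that $f_0=\fS_0 F$; I must show $\mQ^m_\xi(f_0)=F$. The idea is to read off the defining relation of $f_0$ as a quasimodular form directly from the invariance of $F$. Since $\g^{-1}\in\G$ gives $F\md_\xi\g^{-1}=F$, the identity for $\fS_0(F\md_\xi\g^{-1})(\g z)$ recorded just after \eqref{E:44} collapses to $f_0(\g z)=\fJ(\g,z)^\xi\sum_{r=0}^m f_r(z)\fK(\g,z)^r$, which upon division by $\fJ(\g,z)^\xi$ reads $(f_0\mid_\xi\g)(z)=\sum_{r=0}^m f_r(z)\fK(\g,z)^r$. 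This is precisely \eqref{E:qq1} for $f_0$, realized with the very coefficients $f_0,\ldots,f_m$. Thus $f_0\in QM^m_\xi(\G)$, and by the uniqueness of the associated functions in Remark \ref{R:gw}(iii) these coefficients are the ones canonically defining $\mQ^m_\xi(f_0)$. Hence $\mQ^m_\xi(f_0)=\sum_{r=0}^m f_r(z)X^r=F$, giving $\mQ^m_\xi\circ\fS_0=\operatorname{id}$.

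I expect the only genuine subtlety to lie in this last step: one must recognize that the \emph{entire} polynomial $F$, not merely its constant or leading data, is recovered from the single function $f_0$, and this hinges on invoking the uniqueness clause of Remark \ref{R:gw}(iii) to match the coefficients $f_r$ produced by the invariance of $F$ with those intrinsically attached to $f_0$ as a quasimodular form. Everything else is bookkeeping with the already-proved identities \eqref{E:44} and \eqref{E:jp}.
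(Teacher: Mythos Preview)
Your proof is correct and follows essentially the same approach as the paper's: both verify the two compositions directly, with the key identity $(f_0\mid_\xi\g)(z)=\sum_{r=0}^m f_r(z)\fK(\g,z)^r$ for $F\in QP^m_\xi(\G)$ doing the real work. The only cosmetic difference is that the paper cites Corollary~\ref{C:mh} (i.e.\ \eqref{E:ep} with $r=0$) directly for this identity, whereas you obtain it from the displayed formula after \eqref{E:44} combined with $\G$-invariance; these are equivalent routes to the same equation, and your explicit invocation of Remark~\ref{R:gw}(iii) for the uniqueness of the coefficients is a point the paper leaves implicit.
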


\begin{proof}
If $F(z, X) = \sum^m_{r=0} f_r (z) X^r$ is an element of $QP^m_{\xi} (\G)$,
then we have
\[ (\fS_0 F) (z) = f_0 (z) \]
for $z \in \mH$.  However, from Corollary \ref{C:mh} we see that
\[ (f_0 \mid_{\xi} \g) (z) = \sum^m_{\ell = 0} f_\ell (z) \fK
(\g,z)^{\ell} \]
for all $z \in \mH$ and $\g \in \G$.  Thus we have
\[ ((\mQ^m_\xi \circ \fS_0) F) (z, X) = (\mQ^m_\xi f_0) (z,X)
= \sum^m_{j=0} f_j (z) X^j= F(z, X) .\]
We now consider an element $h \in QM^m_\xi (\G)$ satisfying
\[ (h \mid_{\xi} \g) (z) = \sum^m_{\ell = 0} h_\ell (z) \fK
(\g,z)^{\ell} \]
for all $z \in \mH$ and $\g \in \G$, so that
\[ (\mQ^m_\xi h) (z, X) = \sum^m_{\ell = 0} h_\ell (z) X^\ell .\]
Then we see that
\[ (\fS_0 (\mQ^m_\xi h)) (z) = h_0 (z) = h (z) \]
for all $z \in \mH$, and therefore the proposition follows.
\end{proof}

\begin{cor}
Given $F(z,X)= \sum_{r=0}^{m} f_r(z) X^r \in QP^m_{\gl +2m +2\gd} (\G)$,
we set
\begin{equation} \label{E:cm}
\phi_i = (m-i)! f_{m-i}
\end{equation}
for $0 \leq i \leq m$.  Then for each $k \in \{
0, 1, \ldots, m\}$ we have
\begin{equation} \label{E:bz}
(\phi_k \mid_{2k + 2\gd +\gl} \g) (z) = \sum_{r=0}^{k} \frac{1}{r!}
\fK(\g,z)^r \phi_{k-r} (z)
\end{equation}
for all $z \in \mH$ and $\g \in \G$.
\end{cor}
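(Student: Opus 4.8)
The plan is to obtain the stated transformation law directly from Corollary \ref{C:mh} by a relabeling of indices, with no new geometric input. Writing $\xi = \gl + 2m + 2\gd$, the hypothesis $F(z,X) \in QP^m_\xi(\G)$ feeds into Corollary \ref{C:mh} to give, for every $r \in \{0,1,\ldots,m\}$,
\[ (f_r \mid_{\xi - 2r} \g)(z) = \sum_{\ell=0}^{m-r} \bi{\ell+r}{r} f_{\ell+r}(z)\, \fK(\g,z)^\ell \]
for all $z \in \mH$ and $\g \in \G$. The definition $\phi_i = (m-i)!\, f_{m-i}$ is exactly a reversal of the indexing together with a factorial rescaling, so I expect the coefficient $\phi_k$ to correspond to the choice $r = m-k$.

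First I would set $r = m-k$ in the displayed identity. The weight then becomes $\xi - 2(m-k) = \gl + 2\gd + 2k$, which matches the weight $2k + 2\gd + \gl$ appearing in \eqref{E:bz}; this confirms that $r = m-k$ is the correct substitution. Next I would multiply both sides by $(m-k)!$. On the left, linearity of $\mid_{2k+2\gd+\gl}$ turns $(m-k)!\,(f_{m-k} \mid_{2k+2\gd+\gl} \g)$ into $\phi_k \mid_{2k+2\gd+\gl} \g$, as desired. On the right, I would rewrite each $f_{\ell + m - k}$ in terms of the $\phi$'s: from $\phi_i = (m-i)!\,f_{m-i}$ one reads off $f_j = \phi_{m-j}/j!$, so that $f_{\ell+m-k} = \phi_{k-\ell}/(\ell+m-k)!$.

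The remaining step is the factorial bookkeeping. Substituting this relation and expanding $\bi{\ell+m-k}{m-k}$ as $\frac{(\ell+m-k)!}{(m-k)!\,\ell!}$, the prefactor on the $\ell$-th term collapses:
\[ (m-k)! \cdot \frac{(\ell+m-k)!}{(m-k)!\,\ell!} \cdot \frac{1}{(\ell+m-k)!} = \frac{1}{\ell!}, \]
so the right-hand side becomes $\sum_{\ell=0}^k \tfrac{1}{\ell!}\,\fK(\g,z)^\ell\,\phi_{k-\ell}(z)$, which is \eqref{E:bz} after renaming $\ell$ to $r$. The only place requiring care is keeping the three index shifts—$r = m-k$, $j = \ell + m-k$, and the reversal $i \mapsto m-i$—mutually consistent; the computation itself amounts to a single cancellation once this bookkeeping is arranged correctly, so I do not anticipate any substantive obstacle beyond the indexing.
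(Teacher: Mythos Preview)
Your proof is correct and is essentially the same as the paper's: both substitute $r=m-k$ into the identity of Corollary~\ref{C:mh}, multiply by $(m-k)!$, rewrite $f_j$ as $\phi_{m-j}/j!$, and collapse the factorials to obtain~\eqref{E:bz}. The only cosmetic difference is that the paper starts from the first form of~\eqref{E:ep} (summing over $\ell$ from $m-k$ to $m$) and then shifts the index at the end, whereas you start from the already-shifted second form.
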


\begin{proof}
If $0 \leq k \leq m$, using \eqref{E:ep} and \eqref{E:cm}, we have
\begin{align*}
(\phi_k \mid_{2k + 2\gd +\gl} \g) (z) &= (m-k)! (f_{m-k} \mid_{2k + 2\gd
+\gl} \g) (z)\\
&= (m-k)! \sum^m_{\ell = m-k} \bi {\ell} {m-k} f_\ell (z) \fK (\g,
z)^{\ell -m +k}\\
&= \sum^m_{\ell = m-k} \frac {\phi_{m-\ell} (z)} {(\ell -m +k)!} \fK (\g,
z)^{\ell -m +k}
\end{align*}
for all $z \in \mH$.  Hence \eqref{E:bz} can be obtained by changing the
index from $\ell$ to $r = \ell -m +k$ in the previous sum.
\end{proof}

\begin{rem}
Let $f \in QM^m_\xi (\G)$ be a quasimodular form satisfying
\eqref{E:qq1}. In the proof of Theorem 1 in \cite{OA07}, it was
indicated that the corresponding polynomial $F (z,X) = (\mQ_\xi^m
f) (z,X) \in \mF_m [X]$ satisfies
\begin{equation} \label{E:bd}
\fJ (\g, z)^{-\xi} F (\g z, X) = F (z, \fJ (\g, z)^{-2} X + \fK (\g, z))
\end{equation}
for all $z \in \mH$ and $\g \in \G$.  It can be shown, however, that
\eqref{E:bd} is equivalent to the condition that
\[ F \md_\xi \g = F \]
for all $\g \in \G$.  Indeed, if $F (z, X) \in \mF_m [X]$ satisfies
\eqref{E:bd}, by replacing $\g$ and $z$ by $\g^{-1}$ and $\g z$,
respectively, we have
\begin{equation} \label{E:wn}
\fJ (\g^{-1}, \g z)^{-\xi} F (z, X) = F (\g z, \fJ (\g^{-1}, \g z)^{-2} X
+ \fK (\g^{-1}, \g z)).
\end{equation}
From \eqref{E:jp} and \eqref{E:wn} we obtain
\[ F (z, X) = \fJ (\g, z)^{-\xi} F (\g z, \fJ (\g, z)^{2} X -
\fJ (\g, z)^{2} \fK (\g, z)) = (F \md_\xi \g) (z, X) ,\]
where we used \eqref{E:js}.
\end{rem}

We now consider Jacobi-like forms studied by Cohen, Manin and Zagier in
\cite{CM97} and \cite{Za94}.  It turns out that they are closely linked to
quasimodular polynomials and therefore to quasimodular forms.

\begin{dfn} \label{D:wk}
Given an integer $\gl$, a formal power series $\Phi (z,X)$ belonging to
$\mF [[X]]$ is a {\it Jacobi-like form for $\G$ of weight $\gl$\/} if it
satisfies
\[
(\Phi \mid^J_\gl \g) (z,X) = \Phi (z,X)
\]
for all $z \in \mH$ and $\g \in \G$, where $\mid^J_\gl$ is as in
\eqref{E:yt}.
\end{dfn}

We denote by $\mJ_\gl (\G)$ the space of all Jacobi-like forms for $\G$ of
weight $\gl$, and set
\[ \mJ_\gl (\G)_\gd = \mJ_\gl (\G) \cap \mF [[X]]_\gd \]
for each nonnegative integer $\gd$.

\begin{prop} \label{P:p4}
The map $\Pi^\gd_m$ in \eqref{E:mc} induces the complex linear map
\begin{equation} \label{E:7g}
\Pi^\gd_m: \mJ_\gl (\G)_\gd \to QP^m_{\gl +2m +2\gd} (\G)
\end{equation}
for each $\gd \geq 0$ and $\gl \in \bZ$.
\end{prop}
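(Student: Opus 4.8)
The plan is to deduce this essentially for free from the equivariance already established in Proposition~\ref{P:nk}. The two inputs are: (a) the defining invariance of a Jacobi-like form, namely $\Phi \mid^J_\gl \g = \Phi$ for all $\g \in \G$ when $\Phi \in \mJ_\gl(\G)_\gd$; and (b) the intertwining relation \eqref{E:pv}, which holds for \emph{all} $\g \in SL(2,\bR)$ and in particular for all $\g \in \G$. Since $\Pi^\gd_m$ already maps $\mF[[X]]_\gd$ into $\mF_m[X]$, the only thing requiring verification is that the image of a Jacobi-like form lands in the invariant subspace $QP^m_{\gl+2m+2\gd}(\G)$.

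First I would fix $\Phi(z,X) \in \mJ_\gl(\G)_\gd$ and an arbitrary $\g \in \G$. Applying $\Pi^\gd_m$ to the identity $\Phi \mid^J_\gl \g = \Phi$ and then invoking \eqref{E:pv} gives the chain
\[
\Pi^\gd_m(\Phi) = \Pi^\gd_m(\Phi \mid^J_\gl \g) = \Pi^\gd_m(\Phi) \md_{\gl+2m+2\gd} \g .
\]
Because $\g \in \G$ was arbitrary, this says precisely that $\Pi^\gd_m(\Phi)$ is fixed by the right $\G$-action $\md_{\gl+2m+2\gd}$, which by Definition~\ref{D:8d} means $\Pi^\gd_m(\Phi) \in QP^m_{\gl+2m+2\gd}(\G)$. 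Thus $\Pi^\gd_m$ restricts to a map $\mJ_\gl(\G)_\gd \to QP^m_{\gl+2m+2\gd}(\G)$.

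Complex linearity of the restricted map is inherited directly from the complex linearity of $\Pi^\gd_m$ as a map $\mF[[X]]_\gd \to \mF_m[X]$, which was noted when the map was defined in \eqref{E:mc}; no further argument is needed there. There is no serious obstacle in this proof: all the genuine computation—the bookkeeping of the binomial coefficients and the powers of $\fJ$ and $\fK$ needed to match the $\mid^J_\gl$-action against the $\md_\gl$-action with the shifted weight $\gl+2m+2\gd$—has already been carried out in Proposition~\ref{P:nk}. The one point worth stating explicitly is that \eqref{E:pv} was proved for the full group $SL(2,\bR)$, so its specialization to $\g \in \G$ is legitimate, and it is exactly this specialization, combined with the $\G$-invariance built into the definition of $\mJ_\gl(\G)$, that forces the image to be $\G$-invariant as a polynomial.
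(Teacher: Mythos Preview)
Your proof is correct and takes essentially the same approach as the paper: the paper's proof simply cites Proposition~\ref{P:nk}, Definition~\ref{D:8d}, and Definition~\ref{D:wk} as immediately implying the inclusion $\Pi^\gd_m(\mJ_\gl(\G)_\gd) \subset QP^m_{\gl+2m+2\gd}(\G)$, and your argument spells out exactly how those three ingredients combine via the chain $\Pi^\gd_m(\Phi) = \Pi^\gd_m(\Phi \mid^J_\gl \g) = \Pi^\gd_m(\Phi)\md_{\gl+2m+2\gd}\g$.
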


\begin{proof}
Given $\gd, \gl \in \bZ$ with $\gd \geq 0$, we need to show that
\[ \Pi^\gd_m (\mJ_\gl (\G)_\gd) \subset QP^m_{\gl +2m +2\gd} (\G) .\]
However, this follows immediately from Proposition \ref{P:nk}, Definition
\ref{D:8d} and Definition \ref{D:wk}.
\end{proof}

\section{\bf{Hecke operators}}

Hecke operators acting on the space of modular forms play an important
role in number theory, and similar operators can also be defined on the
space of Jacobi-like forms.  In this section we introduce Hecke operators
on quasimodular polynomials that are compatible with those on modular and
Jacobi-like forms.

Let $GL^+ (2, \mbb R)$ be the group of $2\times 2$ real matrices of
positive determinant, which acts on the Poincar\'e upper half plane $\mH$
by linear fractional transformations.  We extend the functions $\fJ$ and
$\fK$ given by \eqref{E:kz} to the maps $\fJ, \fK: GL^+ (2, \mbb R) \times
\mH \to \bC$ by setting
\begin{equation} \label{E:ww}
\fJ (\ga, z) = cz+d, \quad \fK (\ga, z) = \fJ(\ga,z)^{-1} \frac{d}{dz}
(\fJ(\ga,z))
\end{equation}
for $z \in \mH$ and $\ga = \sm a&b\\ c&d \esm \in GL^+ (2, \mbb R)$.
Then it can be shown that $\fJ$ satisfies the cocycle condition
\[
\fJ (\ga \ga', z) = \fJ (\ga, \ga' z) \fJ (\ga', z)
\]
for all $z \in \mH$ and $\ga, \ga' \in GL^+ (2, \mbb R)$ as in
\eqref{E:kk}.  On the other hand, the relation \eqref{E:5k} should be
modified as
\[
\fK (\ga \ga', z) = \fK (\ga', z) + (\det \ga') \fJ (\ga', z)^{-2}
\fK (\ga, \ga'z) .
\]

Two subgroups $\G_1$ and $\G_2$ of $GL^+ (2, \mbb R)$ are said to be {\it
commensurable\/} if $\G_1 \cap \G_2$ has finite index in both $\G_1$ and
$\G_2$, in which case we write $\G_1 \sim \G_2$. Given a subgroup $\gD$ of
$GL^+ (2, \mbb R)$, its commensurator $\wt{\gD} \subset GL^+ (2, \mbb R)$
is given by
\[ \wt{\gD} = \{ g \in GL^+ (2, \mbb R) \mid g \gD g^{-1} \sim \gD \}
.\]
If $\G \subset SL(2,\mbb R)$ is a discrete subgroup, the double coset $\G
\ga \G$ with $\ga \in \wt{\G}$ has a decomposition of the form
\begin{equation} \label{E:xm}
\G \ga \G = \coprod_{i=1}^s \G \ga_i
\end{equation}
for some $\ga_i \in GL^+ (2, \mbb R)$ with $i =1, \ldots, s$ (see e.g.\
\cite{Mi89}).

Given an integer $\gl$, we extend the actions of $SL(2, \bR)$ in
\eqref{E:xt} and \eqref{E:js} to those of $GL^+ (2, \bR)$ by
setting
\[
(f \mid_\gl \ga) (z) = \det (\ga)^{\gl/2} \fJ (\ga, z)^{-\gl} f
(\ga z)
\]
\begin{equation} \label{E:ue}
(F \md_\gl \ga) (z, X) = \det (\ga)^{\gl/2} \fJ (\ga, z)^{-\gl} F (\ga z,
\det (\ga)^{-1} \fJ (\ga, z)^2 ( X - \fK (\ga, z)))
\end{equation}%
for all $z \in \mH$, $\ga \in GL^+ (2,\mbb R)$, $f \in \mF$ and $F (z,X)
\in \mF_m [X]$.  These formulas determine right actions of $GL^+ (2,\mbb
R)$ on $\mF$ and $\mF_m [X]$, so that we have
\[
(f \mid_\gl \ga) \mid_\gl \ga' = f \mid_\gl (\ga\ga'),
\]
\begin{equation} \label{E:4hp}
(F \md_\gl \ga) \md_\gl \ga' = F \md_\gl (\ga\ga')
\end{equation}
for all $\ga, \ga' \in GL^+ (2,\mbb R)$.

Let $\ga \in \wt{\G}$ be an element such that the corresponding double
coset is as in \eqref{E:xm}. Then the associated Hecke operator
\begin{equation} \label{E:fs}
T_\gl (\ga): M_\gl (\G) \to M_\gl (\G)
\end{equation}
on modular forms is given as usual by
\begin{equation} \label{E:eb}
(T_\gl (\ga) f) (z) = \sum_{i=1}^s (f \mid_\gl \ga_i) (z)
\end{equation}
for all $f \in M_\gl (\G)$ and $z \in \mH$ (cf.\ \cite{Mi89}).  Similarly,
given a quasimodular polynomial $F (z,X) \in QP^m_{k} (\G)$, we set
\begin{equation} \label{E:ju}
(T^P_k (\ga) F) (z,X) = \sum_{i=1}^s (F \md_k \ga_i) (z, X)
\end{equation}
for all $z \in \mH$.

\begin{prop}
For each $\ga \in \wt{\G}$ the polynomial given by \eqref{E:ju} is
independent of the choice of the coset representatives $\ga_1, \ldots,
\ga_s$, and the map $F \mapsto T^P_\gl (\ga) F$ determines the linear
endomorphism
\[ T^P_\gl (\ga): QP^m_{\gl} (\G) \to QP^m_{\gl} (\G) .\]
\end{prop}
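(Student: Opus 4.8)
The plan is to follow the classical argument that makes Hecke operators on modular forms well-defined, the three ingredients being the $\G$-invariance of $F$, the right-action identity \eqref{E:4hp}, and the observation that right multiplication by an element of $\G$ merely permutes the left cosets appearing in the decomposition \eqref{E:xm}. Linearity of $F \mapsto T^P_\gl(\ga)F$ will be immediate from the linearity of each $\md_\gl \ga_i$ in $F$, so the real content is the independence of the representatives and the stability of $QP^m_\gl(\G)$ under the operator.

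First I would show that the definition \eqref{E:ju} does not depend on the choice of representatives. Let $\{\ga_i'\}_{i=1}^s$ be a second complete set of representatives for the cosets in $\G\ga\G = \coprod_{i} \G\ga_i$. Since the $\ga_i'$ represent the same $s$ distinct cosets, there is a permutation $\tau$ of $\{1,\dots,s\}$ and elements $\g_i \in \G$ with $\ga_i' = \g_i \ga_{\tau(i)}$. Using \eqref{E:4hp} together with the defining invariance $F \md_\gl \g_i = F$ of $F \in QP^m_\gl(\G)$, I obtain
\[ F \md_\gl \ga_i' = (F \md_\gl \g_i) \md_\gl \ga_{\tau(i)} = F \md_\gl \ga_{\tau(i)}. \]
Summing over $i$ and using that $\tau$ is a bijection gives $\sum_i F \md_\gl \ga_i' = \sum_i F \md_\gl \ga_i$, so \eqref{E:ju} is well-defined.

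Next I would verify that $T^P_\gl(\ga)F$ again lies in $QP^m_\gl(\G)$. Because the substitution $X \mapsto \det(\ga_i)^{-1}\fJ(\ga_i,z)^2(X - \fK(\ga_i,z))$ in \eqref{E:ue} is affine in $X$, each $F \md_\gl \ga_i$ is still a polynomial of degree at most $m$, so the sum lies in $\mF_m[X]$. For the invariance, fix $\g \in \G$. By \eqref{E:4hp},
\[ (T^P_\gl(\ga)F) \md_\gl \g = \sum_{i=1}^s (F \md_\gl \ga_i) \md_\gl \g = \sum_{i=1}^s F \md_\gl (\ga_i \g). \]
Since $\G\g = \G$, right multiplication by $\g$ carries $\coprod_i \G\ga_i = \G\ga\G$ onto $\G\ga\G\g = \G\ga\G$ and sends distinct cosets to distinct cosets (from $\G\ga_i\g = \G\ga_j\g$ one gets $\G\ga_i = \G\ga_j$, hence $i=j$); thus $\{\ga_i\g\}_{i=1}^s$ is once more a complete set of representatives for $\G\ga\G$. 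By the independence established above, the last sum equals $\sum_i F \md_\gl \ga_i = T^P_\gl(\ga)F$. Therefore $(T^P_\gl(\ga)F) \md_\gl \g = T^P_\gl(\ga)F$ for all $\g \in \G$, which is precisely the statement that $T^P_\gl(\ga)F \in QP^m_\gl(\G)$.

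The computation itself is routine; the one point deserving care is the coset bookkeeping, namely confirming that $\{\ga_i\g\}$ is again a system of representatives for $\G\ga\G$ and that the two applications of \eqref{E:4hp} and of the $\G$-invariance of $F$ are correctly aligned. This is exactly the formal mechanism that makes the classical Hecke operator \eqref{E:eb} well-defined, transplanted here to the polynomial action \eqref{E:ue}.
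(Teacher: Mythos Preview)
Your proposal is correct and follows essentially the same argument as the paper's proof: both use \eqref{E:4hp} together with the $\G$-invariance of $F$ to show independence of representatives, and then observe that $\{\ga_i\g\}$ is again a system of representatives for $\G\ga\G$ to obtain $\G$-invariance of $T^P_\gl(\ga)F$. Your write-up is slightly more explicit (including the permutation $\tau$ and the degree check), but there is no substantive difference in approach.
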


\begin{proof}
Suppose that $\{ \gb_1, \ldots, \gb_s \}$ is another set of coset
representatives with $\gb_i = \g_i \ga_i$ and $\g_i \in \G$ for $1
\leq i \leq s$.  Given $F (z, X) \in QP^m_{\gl} (\G)$, using \eqref{E:4hp},
we have
\begin{align} \label{E:3rp}
\sum_{i=1}^s (F \md_\gl \gb_i) (z, X) &= \sum_{i=1}^s ((F
\md_\gl \g_i)\md_\gl \ga_i) (z, X)\\
&= \sum_{i=1}^s (F \md_\gl \ga_i) (z, X) ,\notag
\end{align}
and hence $T_\gl (\ga) F$ is independent of the choice of the
coset representatives.  Since the linearity of the map $F
\mapsto T_\gl (\ga) F$ is clear, it suffices to show that
$T_\gl (\ga) (QP^m_{\gl} (\G)) \subset QP^m_{\gl} (\G)$.  From \eqref{E:4hp},
\eqref{E:ju} and \eqref{E:3rp} we see that
\[ (T_\gl (\ga) F) \md_\gl \g = \sum_{i=1}^s (F \md_\gl \ga_i)
\md_\gl \g = \sum_{i=1}^s F \md_\gl (\ga_i \g). \]
However, the set $\{ \ga_1 \g, \ldots, \ga_s \g \}$ is another complete
set of coset representatives, and therefore we have
\[ \sum_{i=1}^s F \md_\gl (\ga_i \g) = \sum_{i=1}^s F \md_\gl
\ga_i .\]
Thus we see that $(T_\gl (\ga) F) \md_\gl \g = (T_\gl (\ga)
F)$; the proposition follows.
\end{proof}

We now extend the map
\[ F (z,X) \mapsto \fS_0(F||_{\xi} \g) (z): \mF_m [X] \to \mF \]
for $\g \in SL(2, \bR)$ to $GL^+ (2, \bR)$ by changing \eqref{E:44} to
\[
\fS_0(F||_{\xi} \ga) (z) = \sum^m_{j=0} (-1)^j (\det \ga)^{\xi/2
-j} \fJ (\ga, z)^{2j -\xi} \fK (\ga, z)^j f_j (\ga z)
\]
for $\ga \in GL^+ (2, \bR)$.  If $f$ is a quasimodular form belonging to
$QM^m_\xi (\G)$ and if $\ga \in \wt{\G}$ is an element whose double coset
has a decomposition as in \eqref{E:xm}, we set
\begin{equation} \label{E:ox}
T^Q_\xi (\ga) f = \sum^s_{i=1} \fS_0((\mQ_\xi^m f)||_{\xi}\ga) .
\end{equation}
In other words, $T^Q_\xi (\ga)$ is deduced from $T^P_{\xi}(\ga)$
by conjugation, that is,
\begin{equation}\label{E:rf2}
 T^Q_\xi
(\ga)=\fS_0 T^P_{\xi}(\ga) Q_{\xi}^m. \end{equation}
So this
justifies that $T^Q_\xi (\ga) f \in QM^m_\xi (\G)$. Thus we obtain
the linear endomorphism
\begin{equation} \label{E:e5}
T^Q_\xi (\ga): QM^m_\xi (\G) \to QM^m_\xi (\G)
\end{equation}
for each $\ga \in \wt{\G}$.

\begin{dfn} \label{D:9u}
The linear endomorphism \eqref{E:e5} given by \eqref{E:ox} is the {\em
Hecke operator\/} on $QM^m_\xi (\G)$ associated to $\ga \in \wt{\G}$.
\end{dfn}

\begin{rem}
Special types of Hecke operators on quasimodular forms as in Definition
\ref{D:9u} were considered by Movasati in \cite{Mo07} for $\G = SL(2,
\bZ)$.
\end{rem}

\begin{thm} \label{T:xk}
For each $\ga \in \wt{\G}$ the diagram
\begin{equation} \label{E:11}
\begin{CD}
QM^m_{\xi}(\G) @> \mQ^m_\xi >> QP^m_\xi (\G) @> \fS_m >>
M_{\xi -2m} (\G)\\
@V T^Q_{\xi} (\ga) VV @V T^P_\xi (\ga) VV @VV T_{\xi -2m} (\ga) V\\
QM^m_{\xi} (\G) @> \mQ^m_\xi >> QP^m_\xi (\G) @> \fS_m >> M_{\xi
-2m} (\G)
\end{CD}
\end{equation}
is commutative, where the maps $T^P_\xi (\ga)$, $T^Q_\xi (\ga)$ and
$T_{\xi -2m} (\ga)$ are as in \eqref{E:ju}, \eqref{E:ox} and \eqref{E:eb},
respectively.
\end{thm}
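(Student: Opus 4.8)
The plan is to prove the theorem by establishing the commutativity of the two squares in the diagram \eqref{E:11} separately. The right-hand square (involving $\fS_m$ and $T_{\xi-2m}(\ga)$) follows almost immediately from the definitions, while the left-hand square (involving $\mQ^m_\xi$ and $T^Q_\xi(\ga)$) is essentially built into the definition \eqref{E:rf2}. Thus the substance of the argument lies in a careful unwinding of the various Hecke-operator definitions rather than in any genuinely deep computation.

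First I would address the left-hand square. Since $\fS_0: QP^m_\xi(\G) \to QM^m_\xi(\G)$ is an isomorphism with inverse $\mQ^m_\xi$ by Proposition \ref{P:kw}, the defining relation \eqref{E:rf2}, namely $T^Q_\xi(\ga) = \fS_0 \circ T^P_\xi(\ga) \circ \mQ^m_\xi$, can be rearranged. Applying $\mQ^m_\xi$ on the left and using $\mQ^m_\xi \circ \fS_0 = \operatorname{id}$ on $QP^m_\xi(\G)$, I obtain
\[ \mQ^m_\xi \circ T^Q_\xi(\ga) = T^P_\xi(\ga) \circ \mQ^m_\xi, \]
which is exactly the commutativity of the left square. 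The only point requiring care is that \eqref{E:ox} and \eqref{E:rf2} genuinely agree; I would verify that the extended map $\fS_0(F \md_\xi \ga)$ defined for $\ga \in GL^+(2,\bR)$ via the formula preceding \eqref{E:ox} coincides with composing $T^P_\xi(\ga)$ (a sum of $\md_\xi \ga_i$ actions) with $\fS_0$, so that summing $\fS_0((\mQ^m_\xi f)\md_\xi \ga_i)$ over the coset representatives reproduces $(\fS_0 \circ T^P_\xi(\ga) \circ \mQ^m_\xi)(f)$.

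Next I would treat the right-hand square. Here the key is to show $\fS_m \circ T^P_\xi(\ga) = T_{\xi-2m}(\ga) \circ \fS_m$. By \eqref{E:ju} we have $T^P_\xi(\ga)F = \sum_{i=1}^s F \md_\xi \ga_i$, so applying $\fS_m$ and using linearity reduces the claim to showing, for a single coset representative, that $\fS_m(F \md_\xi \ga_i) = (\fS_m F)\mid_{\xi-2m} \ga_i$. This is a $GL^+(2,\bR)$-analog of \eqref{E:rf1}: taking the top coefficient $r=m$ in the expansion of $F \md_\xi \ga$ (using the extended action \eqref{E:ue}) forces $\ell = m$ in the sum, killing all $\fK$-terms and the binomial factor, and leaving precisely $\det(\ga)^{\xi/2} \fJ(\ga,z)^{2m-\xi} f_m(\ga z) \cdot \det(\ga)^{-m}$. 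Recognizing $\det(\ga)^{(\xi-2m)/2}\fJ(\ga,z)^{-(\xi-2m)} f_m(\ga z)$ as exactly $(f_m \mid_{\xi-2m}\ga)(z) = (\fS_m F \mid_{\xi-2m}\ga)(z)$ establishes the per-representative identity, and summing over $i$ completes the square.

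**The main obstacle** I anticipate is purely bookkeeping: tracking the determinant powers $\det(\ga)^{\xi/2}$ and $\det(\ga)^{-1}$ that appear in the $GL^+(2,\bR)$-extended actions \eqref{E:ue} and the modified $\fK$-cocycle, and confirming they combine to the correct weight $\xi-2m$ normalization in the bottom row. Once I verify the single-representative identity $\fS_m(F\md_\xi\ga) = (\fS_m F)\mid_{\xi-2m}\ga$ with the determinant factors correctly accounted for—the analog of \eqref{E:hh} now carried out over $GL^+(2,\bR)$ rather than $\G$—both squares follow formally, and the commutativity of the full diagram is immediate.
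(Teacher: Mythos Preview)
Your proposal is correct and follows essentially the same approach as the paper: the left square is handled via the defining relation $T^Q_\xi(\ga) = \fS_0 \circ T^P_\xi(\ga) \circ \mQ^m_\xi$ together with $\mQ^m_\xi = \fS_0^{-1}$, and the right square is verified by extracting the $X^m$-coefficient of $(F \md_\xi \ga_i)$ using \eqref{E:ue} to obtain $(\det \ga_i)^{\xi/2-m}\fJ(\ga_i,z)^{2m-\xi}f_m(\ga_i z) = (\fS_m F \mid_{\xi-2m}\ga_i)(z)$, then summing over the coset representatives.
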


\begin{proof}
Let $\ga$ be an element of $\wt{\G}$ such that the corresponding double
coset has a decomposition as in \eqref{E:xm}. Given $f \in QM^m_\xi (\G)$,
using \eqref{E:ox}, we have
\[ (\mQ^m_\xi \circ T^Q_\xi (\ga)) f = \mQ^m_\xi \biggl( \fS_0 \biggl(
\sum^s_{i=1} (\mQ^m_\xi f) \md_\xi \ga_i \biggr) \biggr) = \sum^s_{i=1}
(\mQ^m_\xi f) \md_\xi \ga_i, \]
since $\mQ^m_\xi = \fS_0^{-1}$ by Proposition \ref{P:kw}.  On the other
hand, from \eqref{E:ju} we obtain
\[ (T^P_\xi (\ga) \circ Q^m_\xi) f = \sum^s_{i=1} (\mQ^m_\xi f) \md_\xi
\ga_i ;\]
hence it follows that
\[ \mQ^m_\xi \circ T^Q_\xi (\ga) = T^P_\xi (\ga) \circ Q^m_\xi .\]
We now consider a quasimodular polynomial $F(z,X) = \sum^m_{r=0} f_r (z)
X^r \in QP^m_\xi (\G)$.  Using \eqref{E:ue} and \eqref{E:ju}, we have
\begin{align*}
(\fS_m \circ T^P_\xi (\ga)) F &= \fS_m \biggl( \sum^s_{i=1} (F \md_\xi
\ga_i) \biggr)\\
&= \sum^s_{i=1} (\det \ga_i)^{\xi/2} \fJ (\ga_i, z)^{-\xi}\\
&\hspace{.7in} \times \fS_m \biggl( \sum^m_{r=0} f_r (\ga_i z)
(\det
\ga_i)^{-r} \fJ (\ga_i, z)^{2r} (X - \fK (\ga_i, z))^r \biggr)\\
&= \sum^s_{i=1} (\det \ga_i)^{\xi/2 -m} \fJ (\ga_i, z)^{2m -\xi} f_m
(\ga_i z)\\
&= \sum^s_{i=1} (\fS_m F) \mid_{\xi -2m} \ga_i = (T_{\xi -2m} (\ga) \circ
\fS_m) F.
\end{align*}
Thus we obtain
\[ \fS_m \circ T^P_\xi (\ga) = T_{\xi -2m} (\ga) \circ \fS_m ,\]
and therefore the proof of the theorem is complete.
\end{proof}

Given $\gl \in \bZ$, we now extend the action of $SL(2, \bR)$ in
\eqref{E:yt} to that of $GL^+ (2, \bR)$ by setting
\begin{equation} \label{E:sn}
(\Phi \mid^J_\gl \ga) (z,X) = \det (\ga)^{\gl/2} \fJ (\ga, z)^{-\gl}
e^{-\fK (\ga, z) X} \Phi (\ga z, (\det \ga) \fJ (\ga,z)^{-2} X)
\end{equation}
for $\Phi (z,X) \in \mF [[X]]$ and $\ga \in GL^+ (2,\mbb R)$. Then it can
be shown that
\begin{equation} \label{E:2hp}
(\Phi \mid^J_\gl \ga) \mid^J_\gl \ga' = \Phi \mid^J_\gl (\ga\ga')
\end{equation}
for $\ga, \ga' \in GL^+ (2,\mbb R)$.

If $\Phi (z,X) \in \mJ_\gl (\G)$ and if the double coset corresponding to
an element $\ga \in \wt{\G}$ has a decomposition as in \eqref{E:xm}, we set
\begin{equation} \label{E:jh}
(T^J_\gl (\ga) \Phi) (z,X) = \sum_{i=1}^s (\Phi \mid^J_\gl \ga_i) (z, X),
\end{equation}
for all $z \in \mH$.  Then the power series $(T^J_\gl (\ga) \Phi) (z,X)$ is
independent of the choice of the coset representatives $\ga_1, \ldots,
\ga_s$, and the map $\Phi \mapsto T^J_\gl (\ga) \Phi$ determines the
linear endomorphism
\[ T^J_\gl (\ga): \mJ_\gl (\G) \to \mJ_\gl(\G) .\]

\begin{thm} \label{T:ke}
If $\Pi^\gd_m$ is as in \eqref{E:mc}, for each $\ga \in \wt{\G}$ the
diagram
\[ \begin{CD}
\mJ_\gl (\G)_\gd @> \Pi^\gd_m >> QP^m_{\gl +2m +2\gd}(\G)\\
@V T^J_\gl (\ga) VV @VV T^P_{\gl +2m +2\gd} (\ga) V\\
\mJ_\gl (\G)_\gd @> \Pi^\gd_m >> QP^m_{\gl +2m +2\gd}(\G) ,
\end{CD}\]
is commutative, where the Hecke operators $T^P_{\gl +2m +2\gd} (\ga)$ and
$T^J_\gl (\ga)$ are as in \eqref{E:ju} and \eqref{E:jh}, respectively.
\end{thm}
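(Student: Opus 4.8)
The plan is to reduce the commutativity of the diagram to a single equivariance identity relating $\Pi^\gd_m$ to the two $GL^+(2,\bR)$-actions, and then to invoke the linearity of $\Pi^\gd_m$ together with the coset-sum definitions of the Hecke operators. Since both $T^J_\gl (\ga)$ and $T^P_{\gl +2m +2\gd}(\ga)$ are defined by summing the respective slash actions over the coset representatives $\ga_1,\dots,\ga_s$ from \eqref{E:xm}, the identity $\Pi^\gd_m\circ T^J_\gl(\ga)=T^P_{\gl+2m+2\gd}(\ga)\circ\Pi^\gd_m$ will follow at once once I know that $\Pi^\gd_m$ intertwines $\mid^J_\gl$ and $\md_{\gl+2m+2\gd}$ for each individual $\ga_i\in GL^+(2,\bR)$.

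First I would establish the $GL^+(2,\bR)$-analog of Proposition \ref{P:nk}, namely
\[ \Pi^\gd_m(\Phi\mid^J_\gl\ga)=\Pi^\gd_m(\Phi)\md_{\gl+2m+2\gd}\ga \]
for all $\Phi(z,X)\in\mF[[X]]_\gd$ and $\ga\in GL^+(2,\bR)$. The computation mirrors the proof of Proposition \ref{P:nk} verbatim, using the extended actions \eqref{E:sn} and \eqref{E:ue} in place of \eqref{E:yt} and \eqref{E:js}; the only new feature is the appearance of determinant factors. Writing $\Phi(z,X)=\sum_{k\ge0}\phi_k(z)X^{k+\gd}$ and expanding $e^{-\fK(\ga,z)X}$, I would read off the coefficient of $X^{n+\gd}$ in $\Phi\mid^J_\gl\ga$, apply $\Pi^\gd_m$, and compare term by term with the expansion of $\Pi^\gd_m(\Phi)\md_{\gl+2m+2\gd}\ga$ obtained by binomially expanding $(X-\fK(\ga,z))^j$.

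The one point that requires care --- and the main obstacle --- is bookkeeping the powers of $\det\ga$ so that the two sides agree. On the left, the term carrying $\fK(\ga,z)^\ell$ and $\phi_{m-r-\ell}(\ga z)X^r$ acquires the factor $(\det\ga)^{\gl/2+m+\gd-r-\ell}$, coming from the weight factor $(\det\ga)^{\gl/2}$ and the dilation factor $(\det\ga)^{k+\gd}$ in \eqref{E:sn} evaluated at $k=m-r-\ell$. On the right, the corresponding term acquires $(\det\ga)^{(\gl+2m+2\gd)/2-j}$ with $j=\ell+r$, coming from the weight factor in \eqref{E:ue} and the factor $\det(\ga)^{-1}$ inside the argument. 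Since $(\gl+2m+2\gd)/2-(\ell+r)=\gl/2+m+\gd-\ell-r$, these exponents coincide; the matching of the $\fJ$-powers and of the combinatorial coefficients $(-1)^\ell/(r!\,\ell!)$ is then identical to the $SL(2,\bR)$ case already verified in Proposition \ref{P:nk}.

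With this equivariance in hand, I would conclude by a short formal computation: for $\Phi\in\mJ_\gl(\G)_\gd$,
\[ \Pi^\gd_m(T^J_\gl(\ga)\Phi)=\sum_{i=1}^s\Pi^\gd_m(\Phi\mid^J_\gl\ga_i)=\sum_{i=1}^s\Pi^\gd_m(\Phi)\md_{\gl+2m+2\gd}\ga_i=T^P_{\gl+2m+2\gd}(\ga)(\Pi^\gd_m\Phi), \]
using the linearity of $\Pi^\gd_m$, the definition \eqref{E:jh}, the equivariance identity, and the definition \eqref{E:ju} in turn. That $T^J_\gl(\ga)\Phi$ lands in $\mJ_\gl(\G)_\gd$ and that $T^P_{\gl+2m+2\gd}(\ga)(\Pi^\gd_m\Phi)$ lands in $QP^m_{\gl+2m+2\gd}(\G)$ is already guaranteed by the endomorphism statement following \eqref{E:jh} and by the proposition preceding \eqref{E:ox}, so the diagram commutes.
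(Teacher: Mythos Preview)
Your proposal is correct and follows essentially the same approach as the paper. The paper's proof simply invokes ``the extension of \eqref{E:pv} to $GL^+(2,\bR)$'' and then carries out the identical coset-sum computation you wrote at the end; your contribution beyond the paper is the explicit verification of the determinant bookkeeping in that extension, which the paper leaves implicit.
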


\begin{proof}
Let $\ga$ be an element of $\wt{\G}$ such that the corresponding double
coset has a decomposition as in \eqref{E:xm}.  Using \eqref{E:ju} and
\eqref{E:jh} as well as the extension of \eqref{E:pv} to $GL^+ (2, \bR)$,
we obtain
\begin{align*}
(\Pi^\gd_m (T^J_\gl (\ga) \Phi)) (z,X) &= \sum^s_{i=1} \Pi^\gd_m (\Phi
\mid^J_\gl
\ga_i) (z,X)\\
&= \sum^s_{i=1} (\Pi^\gd_m \Phi) \md_{\gl +2m +2\gd} \ga_i) (z,X)\\
&= ((T^P_{\gl +2m +2\gd} (\ga) (\Pi^\gd_m \Phi)) (z,X)
\end{align*}
for all $\Phi (z,X) \in \mJ_\gl (\G)_\gd$, which shows the commutativity
of the given diagram.
\end{proof}

Given a modular form $f \in M_{2w +\gl} (\G)$, it is known that the formal
power series
\begin{equation} \label{E:pc}
\Phi_f (z, X) = \sum^\infty_{\ell =0} \frac {f^{(\ell +\gd
-w)}(z)} {(\ell -w +\gd)! (\ell +w +\gd +\gl -1)!} X^{\ell +\gd}
\end{equation}
with $p!=0$ for $p <0$ is a Jacobi-like form belonging to $\mJ_\gl
(\G)_\gd$(cf.\ \cite{EZ85}, \cite{Za94}).  Thus the map $f
\mapsto \Phi_f$ determines a lifting
\begin{equation} \label{E:mz}
\mL_{w,\gl}^X: M_{2w +\gl} (\G) \to \mJ_\gl (\G)_\gd
\end{equation}
of modular forms to Jacobi-like forms known as the Cohen-Kuznetsov lifting
(see \cite{CM97}).

Let $\pa = d/dz$ be the derivative operator on $\mF$.  If $h$ is a modular
form belonging to $M_k (\G)$, then it is known that $\pa^\ell h$ with
$\ell \geq 1$ is a quasimodular form belonging to $QM^\ell_{k +2\ell}
(\G)$ (see e.g.\ \cite{MR07}).

\begin{thm} \label{T:wh}
Let $f$ be a modular form belonging to $M_{2w + \gl} (\G)$ for some $w \in
\bZ$.

(i) The quasimodular form $\pa^{m-w +\gd} f \in QM^{m-w+\gd}_{\gl
+2m +2\gd} (\G)$ satisfies
\begin{equation} \label{E:sq}
\frac {\pa^{m-w +\gd} f} {(m-w +\gd)! (m + w +\gd +\gl -1)!} = (\fS_0
\circ \Pi^\gd_m \circ \mL^X_{w, \gl}) f .
\end{equation}

(ii) If $\ga \in \wt{\G}$, then we have
\begin{equation} \label{E:gc}
\pa^{m-w +\gd} (T_{2w +\gl} (\ga) f) = T^Q_{\gl +2m +2\gd} (\ga) \pa^{m-w
+\gd} f ,
\end{equation}
where $T_{2w +\gl} (\ga)$ and $T^Q_{\gl +2m +2\gd} (\ga)$ are the Hecke
operators in \eqref{E:fs} and \eqref{E:ox}, respectively.
\end{thm}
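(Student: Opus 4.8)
My plan is to obtain (i) by unwinding the three maps directly, and to deduce (ii) from (i) by pushing the Hecke operator through each of those maps in turn.

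For (i), I would write $\mL^X_{w,\gl}f = \Phi_f = \sum_{k\geq 0}\phi_k X^{k+\gd}$ with $\phi_k = \pa^{k+\gd-w}f/\big((k-w+\gd)!\,(k+w+\gd+\gl-1)!\big)$, as in \eqref{E:pc}. By the definition \eqref{E:mc} of $\Pi^\gd_m$, the image $\Pi^\gd_m\Phi_f$ is the polynomial $\sum_{r=0}^m \frac1{r!}\phi_{m-r}X^r$, and applying $\fS_0$ (the case $\ell=0$ of \eqref{E:sy}) extracts its constant term $\phi_m$. Since $\phi_m = \pa^{m-w+\gd}f/\big((m-w+\gd)!\,(m+w+\gd+\gl-1)!\big)$, this is exactly \eqref{E:sq}. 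There is no real obstacle here; it is pure bookkeeping, the only point to watch being the convention $p!=0$ for $p<0$, which kills the terms of negative differentiation order.

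For (ii), set $\xi = \gl+2m+2\gd$ and $c = (m-w+\gd)!\,(m+w+\gd+\gl-1)!$, so that (i) reads $\pa^{m-w+\gd} = c\,(\fS_0\circ\Pi^\gd_m\circ\mL^X_{w,\gl})$ on $M_{2w+\gl}(\G)$. Because $T_{2w+\gl}(\ga)f$ again lies in $M_{2w+\gl}(\G)$, I may apply (i) to it and thereby reduce \eqref{E:gc} to the commutation of $T^Q_\xi(\ga)$ with the composite $\fS_0\circ\Pi^\gd_m\circ\mL^X_{w,\gl}$. I would push $T$ through the composite using three equivariances: the Hecke-equivariance of the Cohen-Kuznetsov lifting, $\mL^X_{w,\gl}\circ T_{2w+\gl}(\ga) = T^J_\gl(\ga)\circ\mL^X_{w,\gl}$; the identity $\Pi^\gd_m\circ T^J_\gl(\ga) = T^P_\xi(\ga)\circ\Pi^\gd_m$, which is Theorem \ref{T:ke}; and $\fS_0\circ T^P_\xi(\ga) = T^Q_\xi(\ga)\circ\fS_0$, which follows from Theorem \ref{T:xk} together with $\fS_0 = (\mQ^m_\xi)^{-1}$ (Proposition \ref{P:kw}). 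Chaining these gives $\pa^{m-w+\gd}(T_{2w+\gl}(\ga)f) = c\,\fS_0\Pi^\gd_m\mL^X_{w,\gl}(T_{2w+\gl}(\ga)f) = c\,\fS_0\Pi^\gd_m T^J_\gl(\ga)\mL^X_{w,\gl}f = c\,\fS_0 T^P_\xi(\ga)\Pi^\gd_m\mL^X_{w,\gl}f = c\,T^Q_\xi(\ga)\fS_0\Pi^\gd_m\mL^X_{w,\gl}f = T^Q_\xi(\ga)\pa^{m-w+\gd}f$, which is \eqref{E:gc}.

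The only ingredient above not already in the excerpt is the Hecke-equivariance of $\mL^X_{w,\gl}$, and this is where I expect the real work to be. Since both $T_{2w+\gl}(\ga)$ and $T^J_\gl(\ga)$ are sums over the same coset representatives $\ga_1,\dots,\ga_s$ of \eqref{E:xm}, it suffices to prove the single-element intertwining $\mL^X_{w,\gl}(f\mid_{2w+\gl}\ga_i) = (\mL^X_{w,\gl}f)\mid^J_\gl\ga_i$ in $GL^+(2,\bR)$. This is the natural extension, from $\G$ to the commensurator $\wt\G$, of the known $\G$-invariance of $\Phi_f$ (the fact, cited from \cite{EZ85} and \cite{Za94}, that $\Phi_f\in\mJ_\gl(\G)_\gd$), now carrying the determinant factors of \eqref{E:sn} and \eqref{E:ue}. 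I would verify it by expanding both sides as power series in $X$: on the right, substitute the series \eqref{E:pc} into \eqref{E:sn} and multiply by $e^{-\fK(\ga_i,z)X}$; on the left, expand $\pa^{n}(f\mid_{2w+\gl}\ga_i)$ by the Leibniz rule, using that successive derivatives of the automorphy factor $\fJ(\ga_i,z)$ are controlled by $\fK(\ga_i,z)$ through \eqref{E:kz} and \eqref{E:ww}. Comparing the coefficients of $X^{k+\gd}$ then reduces to a binomial identity in $\fJ(\ga_i,z)$ and $\fK(\ga_i,z)$; this matching is the combinatorial heart of the Cohen-Kuznetsov construction and is the step I expect to be the main obstacle, though I anticipate it goes through exactly as in \cite{Za94}.
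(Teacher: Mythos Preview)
Your proposal is correct and follows essentially the same route as the paper: part (i) is obtained exactly as you describe by reading off the $X^0$ coefficient of $\Pi^\gd_m\circ\mL^X_{w,\gl}f$, and part (ii) is deduced by chaining the Hecke-equivariance of $\mL^X_{w,\gl}$ with Theorem~\ref{T:ke} (and, implicitly, the $\fS_0$-equivariance you spell out via Theorem~\ref{T:xk}). The only difference is that the paper cites \cite{LM01} for the Hecke-equivariance of the Cohen--Kuznetsov lifting rather than reproving it; your sketch of that verification is sound but unnecessary here.
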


\begin{proof}
If $\mL_{w,\gl}^X$ is as in \eqref{E:mz}, we obtain the map
\[ \Pi^\gd_m \circ \mL_{w,\gl}^X: M_{2w +\gl} (\G) \to QP^m_{\gl + 2m +2\gd}
(\G) \]
given by%
\[ ((\Pi^\gd_m \circ \mL_{w,\gl}^X)f) (z, X) = \sum^m_{r =0}
\frac {f^{(m-r +\gd -w)} (z)} {(m-r -w +\gd)! (m-r +w +\gd +\gl
-1)!}\frac{X^r}{r!}
\]
for all $f \in M_{2w +\gl} (\G)$.  Thus we obtain \eqref{E:sq} by
considering the coefficient of $X^0$ in  this relation.  On the
other hand, given $\ga \in \wt{\G}$ and $f \in M_{2w +\gl} (\G)$,
it is known (cf.\ \cite{LM01}) that
\[ \mL_{w,\gl}^X (T_\gl (\ga) f) = T^J_\gl (\ga) (\mL_{w,\gl}^X f) .\]
Hence (ii) follows from this and Theorem \ref{T:ke}.
\end{proof}

Theorem \ref{T:wh} shows that the diagram
\[ \begin{CD}
M_{2w +\gl} (\G) @> \pa^{m-w +\gd} >> QM^m_{\gl +2m +2\gd} (\G)\\
@V T_{2w +\gl} (\ga) VV @VV T^Q_{\gl +2m +2\gd} (\ga) V\\
M_{2w +\gl} (\G) @> \pa^{m-w +\gd} >> QM^m_{\gl +2m +2\gd} (\G)
\end{CD}\]
is commutative for each $\ga \in \wt{\G}$.

\section{\bf{Liftings of modular forms to quasimodular forms}} \label{S:fq}

There is a natural surjective map from quasimodular polynomials to modular
forms sending a quasimodular polynomial to its leading coefficient.  In
this section we construct a lifting of a modular form to a quasimodular
form whose leading coefficient coincides with the given modular form.

If $\fS_m: \mF_m [X] \to \mF$ and $\wh{\fS}^\gd: \mF [[X]] \to \mF$ are as
in \eqref{E:v6}, from \eqref{E:hh} and Definition \ref{D:wk} we see that
\[
\wh{\fS}^\gd (\mJ_{\gl} (\G)_\gd) \subset M_{\gl +2\gd} (\G) ,\quad \fS_m
(QP^m_{\xi} (\G)) \subset  M_{\xi -2m} (\G) .
\]
Hence the rows in \eqref{E:dd} induce the short exact sequences of the
form
\[ 0 \to \mJ_{\gl} (\G)_{\gd +1} \xrightarrow{\wh{\iota}}  \mJ_{\gl}
(\G)_\gd \xrightarrow{\wh{\fS}^\gd} M_{\gl +2\gd} (\G) \to 0 ,\]
\[ 0 \to QP^{m-1}_\xi (\G) \xrightarrow{\iota} QP^m_{\xi} (\G)
\xrightarrow{\fS_m} M_{\xi -2m} (\G) \to 0 .\]
for integers $\xi$, $\gl$ and $\gd >0$. Furthermore, combining these
results with Proposition \ref{P:p4} and the isomorphism \eqref{E:37}, we
see that the diagram \eqref{E:dd} induces the commutative diagram
\begin{equation} \label{E:ee}
\begin{CD}
0 @>>> \mJ_{\gl} (\G)_{\gd +1} @> \wh{\iota} >>  \mJ_{\gl} (\G)_\gd
@> \wh{\fS}^\gd >> M_{\gl +2\gd} (\G) @>>> 0\\
@. @V \Pi^{\gd+1}_{m -1} VV @VV \Pi^\gd_m V @VV \mu_m V @.\\
0 @>>> QP^{m-1}_{\gl +2m + 2\gd} (\G) @> \iota >> QP^m_{\gl +2m + 2\gd}
(\G) @> \fS_m >> M_{\gl +2\gd} (\G) @>>> 0\\
@. @V \fS_0 VV @VV \fS_0 V @| @.\\
0 @>>> QM^{m-1}_{\gl +2m + 2\gd} (\G) @> >> QM^m_{\gl +2m + 2\gd} (\G) @>
>> M_{\gl +2\gd} (\G) @>>> 0
\end{CD}
\end{equation}
in which the three rows are short exact sequences.

\begin{prop}
Given integers $\gd$ and $\gl$ with $\gd >0$ and a modular form $h \in
M_{\gl+2\gd} (\G)$, the polynomial $(\Xi^{\gl +2\gd}_m h) (z, X) \in \mF_m [X]$
given by
\begin{equation} \label{E:ak}
(\Xi^{\gl +2\gd}_m h) (z,X) = \sum^m_{r=0} \frac {m! (\gl +2\gd -1)! h^{(m-r)}
(z)} {r! (m-r)! (m-r + 2\gd +\gl -1)!} X^r
\end{equation}
is a quasimodular polynomial belonging to $QP^m_{\gl +2m +2\gd} (\G)$.
Furthermore, we have
\begin{equation} \label{E:mu}
\fS_m (\Xi^{\gl +2\gd}_m h) = h .
\end{equation}
\end{prop}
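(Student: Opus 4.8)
The plan is to observe that the polynomial $\Xi^{\gl+2\gd}_m h$ is nothing but a constant multiple of the image of $h$ under the composite $\Pi^\gd_m \circ \mL^X_{\gd,\gl}$ of a Cohen-Kuznetsov lifting followed by the projection $\Pi^\gd_m$. Once this is recognized, membership in $QP^m_{\gl+2m+2\gd}(\G)$ comes for free from the results already established, and the identity $\fS_m(\Xi^{\gl+2\gd}_m h)=h$ is a one-line computation. Concretely, I would take $w=\gd$ in the lifting \eqref{E:mz}, so that its source $M_{2w+\gl}(\G)$ becomes exactly $M_{\gl+2\gd}(\G)$ and $\mL^X_{\gd,\gl}h$ is a Jacobi-like form belonging to $\mJ_\gl(\G)_\gd$.

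Substituting $w=\gd$ and $f=h$ into the explicit formula for $\Pi^\gd_m \circ \mL^X_{w,\gl}$ derived in the proof of Theorem \ref{T:wh}, the numerator becomes $h^{(m-r)}$, the first factorial becomes $(m-r)!$, and the second becomes $(m-r+2\gd+\gl-1)!$, so that
\[ ((\Pi^\gd_m \circ \mL^X_{\gd,\gl})h)(z,X) = \sum^m_{r=0} \frac{h^{(m-r)}(z)}{r!\,(m-r)!\,(m-r+2\gd+\gl-1)!}\, X^r . \]
Comparing this term by term with \eqref{E:ak}, I would conclude the identity
\[ \Xi^{\gl+2\gd}_m h = m!\,(\gl+2\gd-1)!\,(\Pi^\gd_m \circ \mL^X_{\gd,\gl})h . \]
By Proposition \ref{P:p4} the map $\Pi^\gd_m$ carries $\mJ_\gl(\G)_\gd$ into $QP^m_{\gl+2m+2\gd}(\G)$; since $\mL^X_{\gd,\gl}h\in\mJ_\gl(\G)_\gd$ and $QP^m_{\gl+2m+2\gd}(\G)$ is a complex vector space, the scalar multiple $\Xi^{\gl+2\gd}_m h$ again lies in $QP^m_{\gl+2m+2\gd}(\G)$. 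This settles the first assertion without any direct verification of the $\G$-invariance property \eqref{E:js}.

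For the normalization I would simply read off the coefficient of $X^m$ in \eqref{E:ak}, that is, set $r=m$: the corresponding summand equals
\[ \frac{m!\,(\gl+2\gd-1)!\,h^{(0)}(z)}{m!\,0!\,(2\gd+\gl-1)!} = h(z), \]
whence $\fS_m(\Xi^{\gl+2\gd}_m h)=h$. There is no genuine obstacle in this argument; the only point demanding care is the bookkeeping of the factorial indices when matching \eqref{E:ak} against the Theorem \ref{T:wh} formula specialized to $w=\gd$, and checking that the constant $m!\,(\gl+2\gd-1)!$ is exactly the factor converting one into the other.
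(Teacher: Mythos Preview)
Your proposal is correct and follows essentially the same route as the paper's own proof: both recognize $\Xi^{\gl+2\gd}_m h$ as $m!\,(\gl+2\gd-1)!$ times $\Pi^\gd_m$ applied to the Cohen--Kuznetsov lifting of $h$ (with $w=\gd$), invoke Proposition~\ref{P:p4} for membership in $QP^m_{\gl+2m+2\gd}(\G)$, and read off the $X^m$-coefficient for \eqref{E:mu}. The only cosmetic difference is that the paper computes $\Pi^\gd_m$ of the lifting directly from \eqref{E:pc} and \eqref{E:mc}, whereas you quote the same computation from the proof of Theorem~\ref{T:wh}.
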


\begin{proof}
Since $h \in M_{\gl+2\gd} (\G)$, by \eqref{E:pc} the formal power series
$\Phi_h (z, X)$ given by
\[ \Phi_h (z, X) = m! (\gl +2\gd -1)! \sum^\infty_{\ell =0} \frac
{h^{(\ell)}(z)} {\ell! (\ell + 2\gd +\gl -1)!} X^{\ell +\gd} \] is
a Jacobi-like form belonging to $\mJ_\gl (\G)_\gd$.  From this and
\eqref{E:mc} we obtain
\begin{align*}
(\Pi^{\gd}_m \Phi_h) (z, X) &= m! (\gl +2\gd -1)! \sum^m_{r=0} \frac
{ h^{(m-r)} (z)} {r! (m-r)! (m-r + 2\gd +\gl -1)!} X^r\\
&= (\Xi^{\gl +2\gd}_m h) (z,X) ,
\end{align*}
and it is a quasimodular polynomial belonging to $QP^m_{\gl +2m +2\gd}
(\G)$. On the other hand, we see easily that the coefficient of $X^m$ in
this polynomial is equal to $h$, which shows that $\fS_m (\Xi^{\gl
+2\gd}_m h) = h$.
\end{proof}

The formula \eqref{E:ak} determines the linear map
\[ \Xi^{\gl +2\gd}_m: M_{\gl+2\gd} (\G) \to QP^m_{\gl+2m +2\gd} (\G) ,\]
which may be regarded as a lifting from modular forms to quasimodular
forms.  From \eqref{E:mu} we see that the short exact sequence in the
second row of \eqref{E:ee} splits.  Furthermore, from \eqref{E:gc} and
\eqref{E:ak} we see that $\Xi^{\gl +2\gd}_m$ is Hecke equivariant, meaning
that the diagram
\begin{equation} \label{E:54}
\begin{CD}
M_{2\gd +\gl} (\G) @> \Xi^{\gl +2\gd}_m >> QP^m_{\gl +2m +2\gd} (\G)\\
@V T_{2\gd +\gl} (\ga) VV @VV T^P_{\gl +2m +2\gd} (\ga) V\\
M_{2\gd +\gl} (\G) @> \Xi^{\gl +2\gd}_m >> QP^m_{\gl +2m +2\gd} (\G)
\end{CD}
\end{equation}
is commutative for each $\ga \in \wt{\G}$.

We also consider the lifting
\[ \wh{\Xi}_\gd: M_{\gl+2\gd} (\G) \to \mJ_\gl (\G)_\gd \]
from modular forms to Jacobi-like forms defined by
\begin{align} \label{E:m2}
(\wh{\Xi}_\gd f) (z,X) &= (\gl +2\gd -1)! (\mL_{\gd,\gl}^X f) (z,X) \\
&= (\gl +2\gd -1)! \sum^\infty_{\ell =0} \frac {f^{(\ell)} (z)} {\ell!
(\ell + 2\gd +\gl -1)!} X^{\ell +\gd} \notag
\end{align}
for $f \in M_{\gl+2\gd} (\G)$, where $\mL_{\gd,\gl}^X$ is the
Cohen-Kuznetsov lifting in \eqref{E:mz}.  Then we have
\begin{equation} \label{E:r6}
\wh{\fS}^\gd \circ \wh{\Xi}_\gd (f) = f.
\end{equation}
It can be shown that the map $\wh{\Xi}_\gd$ is Hecke
equivariant (cf.\ \cite{LM01}), so that the diagram
\[\begin{CD}
M_{2\gd +\gl} (\G) @> \wh{\Xi}_\gd >> \mJ_\gl (\G)_\gd\\
@V T_{2w +\gl} (\ga) VV @VV T^J_\gl (\ga) V\\
M_{2\gd +\gl} (\G) @> \wh{\Xi}_\gd >> \mJ_\gl (\G)_\gd
\end{CD}\]
is commutative for each $\ga \in \wt{\G}$.  Furthermore, we also obtain the
diagram
\begin{equation} \label{E:j4}
\begin{CD}
M_{\gl +2\gd} (\G) @> \wh{\Xi}_\gd >> \mJ_{\gl} (\G)_\gd\\
@VV \mu_m V @VV \Pi^\gd_m V\\
M_{\gl +2\gd} (\G) @> \Xi^{\gl +2\gd}_m >> QP^m_{\gl +2m + 2\gd} (\G)
\end{CD}
\end{equation}
that is commutative.  We note that the linear map
\[ \wh{\Xi}_\gd \circ \fS_m: QP^m_{\gl +2m + 2\gd} (\G) \to
\mJ_{\gl} (\G)_\gd \]
satisfies
\[ \wh{\fS}^\gd \circ \wh{\Xi}_\gd \circ \fS_m = \fS_m ,\]
which follows from \eqref{E:r6} and the relation $\fS_m (QP^m_{\gl +2m +
2\gd} (\G)) \subset M_{\gl+2\gd} (\G)$.

\section{\bf{Liftings of quasimodular forms to Jacobi-like forms}} \label{S:jq}

In Section \ref{S:fq} we considered liftings of modular forms to
quasimodular forms as well as Cohen-Kuznetsov liftings of modular forms to
Jacobi-like forms that are Hecke equivariant.  On the other hand, in
Section \ref{S:qf} we studied surjective maps from Jacobi-like forms to
quasimodular polynomials.  In this section we construct liftings of
quasimodular to Jacobi-like forms corresponding to this surjection that are
Hecke equivariant.

It is important to note that the map $\Pi^\gd_m: \mJ_\gl (\G)_\gd \to
QP^m_{\gl +2m +2\gd} (\G)$ in \eqref{E:7g} depends on $\gd$.  For example,
if $\ve$ is an integer with $0 \leq \ve \leq \gd$ a Jacobi like form
\[ \Phi (z,X) = \sum^\infty_{k =0} \phi_k (z) X^{k+\gd} \in \mJ_\gl
(\G)_\gd \]
can be regarded as an element of $\mJ_\gl (\G)_\ve$ by writing it as
\[ \Phi (z,X) = \sum^\infty_{k =0} \phi_{k -\gd +\ve} (z) X^{k+\ve} \]
with $\phi_j =0$ for $j < 0$.  Thus, from \eqref{E:mc} we see that
\[ (\Pi^\gd_m \Phi) (z, X) = \sum^m_{r =0} \frac{1}{r!} \phi_{m-r} (z) X^r
,\]
\begin{align*}
(\Pi^\ve_m \Phi) (z, X) &= \sum^m_{r =0} \frac{1}{r!} \phi_{m -\gd
  +\ve -r} (z) X^r\\
&= \sum^{m-\gd +\ve}_{r =0} \frac{1}{r!} \phi_{m -\gd
  +\ve -r} (z) X^r\\
&= (\Pi^\ve_{m-\gd +\ve} \Phi) (z, X)
\end{align*}
Although $(\Pi^\ve_m \Phi) (z, X)$ above belongs to $QP^{m-\gd +\ve}_{\gl
+2m +2\gd} (\G) \subset QP^{m}_{\gl +2m +2\gd} (\G)$ and involves only the
first $m-\gd +\ve +1$ coefficients of $\Phi (z, X)$, it is different from
$(\Pi^\gd_{m-\gd +\ve} \Phi) (z, X)$ which is given by
\[ (\Pi^\gd_{m-\gd +\ve} \Phi) (z, X) = \sum^{m-\gd +\ve}_{r =0}
\frac{1}{r!} \phi_{m -r} (z) X^r .\]

In order to construct a lifting from quasimodular polynomials to
Jacobi-like forms, as in the next lemma, we first determine a quasimodular
polynomial of degree $\leq m-1$ from one of degree $\leq m$ by using the
commutative diagram \eqref{E:j4}.  Combining this with the fact that
quasimodular forms of degree 0 are modular forms, the desired lifting can
be obtained by induction and Cohen-Kuznetsov liftings of modular forms (see
Theorem \ref{T:mn} below).

\begin{lem}
Let $F(z,X)$ be a quasimodular polynomial belonging to $QP^m_{\xi} (\G)$,
and let $m$ and $\xi$ be integers with $m \geq 1$.  Then we have
\begin{equation} \label{E:9k}
F (z,X) - m! ((\Pi^\gd_m \circ \wh{\Xi}_\gd \circ \fS_m) F) (z, X) \in
QP^{m-1}_\xi (\G),
\end{equation}
where $\fS_m$, $\wh{\Xi}_\gd$ and $\Pi^\gd_m$ are as in \eqref{E:hh},
\eqref{E:7g} and \eqref{E:m2}, respectively.
\end{lem}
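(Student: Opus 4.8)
The plan is to recognize the three-fold composite $m!\,(\Pi^\gd_m \circ \wh{\Xi}_\gd \circ \fS_m)$ as the single lifting $\Xi^{\gl+2\gd}_m$ of the Proposition above, so that \eqref{E:9k} reduces to the statement that subtracting this lifting annihilates the leading coefficient. First I would pin down the weights. For the composite to be defined at all we must have $\xi = \gl + 2m + 2\gd$ (equivalently $\gl = \xi - 2m - 2\gd$), since $\fS_m$ carries $QP^m_\xi(\G)$ into $M_{\xi-2m}(\G)$ by \eqref{E:hh}, whereas $\wh{\Xi}_\gd$ is defined on $M_{\gl+2\gd}(\G)$; composability forces $\xi - 2m = \gl + 2\gd$. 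Thus $h := \fS_m F$ is a modular form in $M_{\gl+2\gd}(\G)$, and $\xi$ is read as $\gl+2m+2\gd$ throughout.

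Next I would invoke the commutativity of the diagram \eqref{E:j4}, namely $\Pi^\gd_m \circ \wh{\Xi}_\gd = \Xi^{\gl+2\gd}_m \circ \mu_m$. Since $\mu_m$ is multiplication by $1/m!$ and $\Xi^{\gl+2\gd}_m$ is linear, evaluating both sides at $h$ gives $(\Pi^\gd_m \circ \wh{\Xi}_\gd)(h) = \tfrac{1}{m!}\,\Xi^{\gl+2\gd}_m(h)$, whence
\[ m!\,((\Pi^\gd_m \circ \wh{\Xi}_\gd \circ \fS_m) F)(z,X) = (\Xi^{\gl+2\gd}_m h)(z,X). \]
If one prefers to bypass the diagram, the same identity drops out of a direct comparison of \eqref{E:mc}, \eqref{E:m2} and \eqref{E:ak} coefficient by coefficient, but this is unnecessary given \eqref{E:j4}.

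I would then set $G = F - \Xi^{\gl+2\gd}_m h$ and verify two properties. By the Proposition above, $\Xi^{\gl+2\gd}_m h \in QP^m_{\gl+2m+2\gd}(\G) = QP^m_\xi(\G)$; since $QP^m_\xi(\G)$ is a complex vector space and $F$ also lies in it, we get $G \in QP^m_\xi(\G)$. Moreover \eqref{E:mu} gives $\fS_m(\Xi^{\gl+2\gd}_m h) = h = \fS_m F$, so by linearity of $\fS_m$ we have $\fS_m G = \fS_m F - \fS_m F = 0$.

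Finally I would conclude using the short exact sequence in the second row of \eqref{E:ee},
\[ 0 \to QP^{m-1}_\xi(\G) \xrightarrow{\iota} QP^m_\xi(\G) \xrightarrow{\fS_m} M_{\gl+2\gd}(\G) \to 0, \]
whose exactness at the middle term identifies $QP^{m-1}_\xi(\G)$ with $\ker \fS_m$. As $\fS_m G = 0$, this yields $G \in QP^{m-1}_\xi(\G)$, which is precisely \eqref{E:9k}. I do not expect a genuine obstacle: the argument is essentially a diagram chase, and the only delicate points are the weight bookkeeping that forces $\xi = \gl+2m+2\gd$ and the consistent use of \eqref{E:j4} to replace the three-fold composite by the single lifting $\Xi^{\gl+2\gd}_m$.
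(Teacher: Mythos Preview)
Your proof is correct and follows essentially the same route as the paper: both invoke the commutative diagram \eqref{E:j4} and the identity \eqref{E:mu} to see that $m!\,(\Pi^\gd_m\circ\wh{\Xi}_\gd\circ\fS_m)F$ has the same $X^m$-coefficient as $F$, and then conclude via $\ker\fS_m = QP^{m-1}_\xi(\G)$. The only cosmetic difference is that you route the computation through the lifting $\Xi^{\gl+2\gd}_m$ and the exact sequence explicitly, while the paper reads off the $X^m$-coefficient of $(\Pi^\gd_m\circ\wh{\Xi}_\gd)(\fS_m F)$ directly from \eqref{E:mc} and \eqref{E:r6}; these are equivalent unpackings of the same diagram chase.
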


\begin{proof}
The proof follow from \eqref{E:mu} and the commutative diagram
\eqref{E:j4}.  Indeed, given $F (z, X) \in QP^m_{\xi} (\G)$, we have $\fS_m
F \in M_{\xi -2m} (\G)$, hence we see that
\[ ((\wh{\Xi}_\gd \circ \fS_m) F) (z, X) \in \mJ_{\xi -2m -2\gd} (\G)_\gd
.\]
Then the initial term of this Jacobi-like form is equal to $(\fS_m F) (z)$,
which implies that the coefficient of $X^m$ in the quasimodular polynomial
\[ ((\Pi^\gd_m \circ \wh{\Xi}_\gd \circ \fS_m) F) (z, X) \in
QP^m_{\xi} (\G) \]
is $\frac 1{m!} (\fS_m F) (z)$.  On the other hand, the corresponding
coefficient in $F (z,X)$ is equal to $(\fS_m F) (z)$; hence the lemma
follows.
\end{proof}

\begin{thm} \label{T:mn}
Given a positive integer $m$ and a quasimodular polynomial $F(z,X) \in
QP^m_{\xi} (\G)$ with $\xi > 2m \geq 2$, there exists a Jacobi-like form
$\Phi (z,X) \in \mJ_{\xi -2m -2\gd} (\G)_\gd$ such that%
\[ \Pi^\gd_m \Phi = F \]
for each $\gd \geq 0$.
\end{thm}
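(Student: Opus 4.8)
The plan is to induct on $m$, keeping $\gd$ as a free parameter so that at the inductive step I may invoke the hypothesis with $\gd$ replaced by $\gd+1$. Throughout write $\gl=\xi-2m-2\gd$, so that the asserted target is $\mJ_\gl(\G)_\gd$ and $\Pi^\gd_m\colon\mJ_\gl(\G)_\gd\to QP^m_\xi(\G)$ by \eqref{E:7g}. For the base case $m=0$ one has $QP^0_\xi(\G)=M_\xi(\G)$, and since $\xi>0$ the Cohen-Kuznetsov lifting $\wh\Xi_\gd$ of \eqref{E:m2} is well defined on weight $\xi$; its image $\wh\Xi_\gd F\in\mJ_{\xi-2\gd}(\G)_\gd$ has initial coefficient $F$ by \eqref{E:r6}, whence $\Pi^\gd_0(\wh\Xi_\gd F)=F$ and I may take $\Phi=\wh\Xi_\gd F$.

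Now let $m\ge 1$ and assume the statement for degree $m-1$ and every $\gd\ge 0$. Given $F\in QP^m_\xi(\G)$, I first strip off the top coefficient using the preceding lemma. Since $\fS_m F\in M_{\xi-2m}(\G)$ by \eqref{E:hh}, and since the hypothesis $\xi>2m$ is exactly what makes the factorials in \eqref{E:m2} legitimate for weight $\xi-2m$ (the prefactor $(\xi-2m-1)!$ being the binding constraint), the power series
\[ \Phi_1=m!\,(\wh\Xi_\gd\circ\fS_m)F \]
is a Jacobi-like form lying in $\mJ_{\xi-2m-2\gd}(\G)_\gd=\mJ_\gl(\G)_\gd$, i.e.\ of exactly the desired weight $\gl$. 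By \eqref{E:9k},
\[ G:=F-\Pi^\gd_m\Phi_1=F-m!\,\bigl((\Pi^\gd_m\circ\wh\Xi_\gd\circ\fS_m)F\bigr)\in QP^{m-1}_\xi(\G). \]

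It remains to realize $G$ as $\Pi^\gd_m$ of a Jacobi-like form of the same weight $\gl$, and this matching of weights is the only delicate point. A naive application of the induction hypothesis to $G\in QP^{m-1}_\xi(\G)$ would yield a form of weight $\xi-2(m-1)-2\gd\ne\gl$; to correct this I apply it with $\gd$ replaced by $\gd+1$, which is legitimate since $\xi>2m>2(m-1)$. This produces $\Psi\in\mJ_{\xi-2(m-1)-2(\gd+1)}(\G)_{\gd+1}=\mJ_\gl(\G)_{\gd+1}$ with $\Pi^{\gd+1}_{m-1}\Psi=G$. Regarding $\Psi\in\mJ_\gl(\G)_{\gd+1}\subset\mJ_\gl(\G)_\gd$ and reading off \eqref{E:mc} directly --- a shift-$(\gd+1)$ series has no $X^{\gd}$ term, so the $X^m$ coefficient produced by $\Pi^\gd_m$ vanishes while the lower coefficients reindex exactly --- one checks $\Pi^\gd_m\Psi=\Pi^{\gd+1}_{m-1}\Psi=G$, the reindexing relation recorded just before the theorem. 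Setting $\Phi=\Phi_1+\Psi\in\mJ_\gl(\G)_\gd$ then gives $\Pi^\gd_m\Phi=(F-G)+G=F$, completing the induction. The main obstacle, accordingly, is not any single computation but the bookkeeping of weights and shifts: one must raise $\gd$ to $\gd+1$ in the inductive call in order to hold the weight $\gl$ fixed, and then reconcile $\Pi^{\gd+1}_{m-1}$ with $\Pi^\gd_m$ through the reindexing identity.
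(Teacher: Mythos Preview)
Your proof is correct and follows essentially the same approach as the paper: strip off the leading coefficient via $m!\,\wh\Xi_\gd(\fS_m F)$, apply the induction hypothesis to the remainder $G\in QP^{m-1}_\xi(\G)$ with $\gd$ replaced by $\gd+1$ so as to keep the weight $\gl=\xi-2m-2\gd$ fixed, and combine. The only cosmetic differences are that you start the induction at $m=0$ rather than $m=1$, and that you invoke the reindexing identity $\Pi^\gd_m\Psi=\Pi^{\gd+1}_{m-1}\Psi$ (which is precisely the commutativity $\Pi^\gd_m\circ\wh\iota=\iota\circ\Pi^{\gd+1}_{m-1}$ of diagram \eqref{E:dd}) whereas the paper writes out the coefficient comparison explicitly in \eqref{E:x9}--\eqref{E:v3}.
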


\begin{proof}
We first consider the case where $m=1$.  Let
\[ F (z,X) = f_0 (z) + f_1(z) X \in QP^1_\xi (\G) ,\]
so that $f_1 = \fS_1 F \in M_{\xi -2} (\G)$.  Using \eqref{E:9k}, we see
that there is an element $h \in QP^0_\xi (\G) = M_\xi (\G)$ such that
\[ F (z, X) - \Pi^\gd_1 (\wh{\Xi}_\gd f_1) (z, X) = h (z) \]
for all $z \in \mH$.  However, from \eqref{E:r6} we see that
\[ h = \Pi_0^{\gd+1} (\wh{\Xi}_{\gd +1} h) \]
with $\wh{\Xi}_{\gd +1} h \in \mJ_{\xi -2 -2\gd} (\G)_{\gd +1}$.  If we
write
\[ (\wh{\Xi}_\gd f_1) (z,X) = \sum^\infty_{k=0} \phi_k (z) X^{k +\gd},
\quad (\wh{\Xi}_{\gd +1} h) (z,X) = \sum^\infty_{k=0} \psi_k (z) X^{k +\gd
  +1} \]
with $\phi_k$ and $\psi_k$ being holomorphic functions for each $k \geq 0$,
then we obtain
\[ F (z,X) = (\Pi^\gd_1 (\wh{\Xi}_\gd f_1) + \Pi_0^{\gd+1} (\wh{\Xi}_{\gd +1} h))
(z,X) = (\phi_1 + \psi_0) (z) + (\phi_0 + \psi_{-1}) (z) X \] with
$\psi_{-1} =0$.  Thus we see that
\[ F = \Pi^\gd_1 (\wh{\Xi}_\gd f_1 + \wh{\Xi}_{\gd +1} h) ,\]
where
\[ (\wh{\Xi}_\gd f_1 + \wh{\Xi}_{\gd +1} h) (z,X) = \sum^\infty_{k=0}
(\phi_k + \psi_{k-1}) (z) X^{k +\gd} \in \mJ_{\xi -2 -2\gd}
(\G)_\gd \] with $\psi_{-1} =0$; hence the $m=1$ case follows.  We
now consider an integer $m>1$ and assume that the statement in the
theorem holds for all positive integers less than $m$.  If $F(z,X)
\in QP^m_{\xi} (\G)$, then by \eqref{E:9k} there is an element $G
(z,X) \in QP^{m-1}_\xi (\G)$ satisfying
\[ F - (m!) \Pi^\gd_m (\wh{\Xi}_\gd (\fS_m F)) = G .\]
Thus by the induction hypothesis there is an element $\Psi (z,X) \in
\mJ_{\xi -2m -2\gd} (\G)_{\gd +1}$ such that%
\[ \Pi^{\gd +1}_{m -1} \Psi = G .\]
Hence we obtain
\[ F = (m!) \Pi^\gd_m (\wh{\Xi}_\gd (\fS_m F)) + \Pi^{\gd
  +1}_{m -1} \Psi .\]
Thus, assuming that
\[ (m!) (\wh{\Xi}_\gd (\fS_m F)) (z,X) = \sum^\infty_{k=0} \ga_k (z) X^{k
+\gd}, \quad \Psi (z,X) = \sum^\infty_{k=0} \gb_k (z) X^{k +\gd +1} \]
with $\ga_k$ and $\gb_k$ holomorphic for each $k \geq 0$, we have
\begin{align} \label{E:x9}
F(z,X) &= \sum^m_{r=0} \frac 1{r!} \ga_{m-r} (z) X^r + \sum^{m -1}_{r=0}
\frac 1{r!} \gb_{m -1-r} (z) X^r\\
&= \sum^m_{r=0} \frac 1{r!} (\ga_{m-r} + \gb_{m -1-r}) (z) X^r \notag
\end{align}
with $\gb_{-1} =0$.  If we set
\begin{equation} \label{E:v3}
\Phi (z,X) = \bigl( (m!)\wh{\Xi}_\gd (\fS_m F) + \Psi \bigr) (z,X) =
\sum^\infty_{k=0} (\ga_k +\gb_{k-1}) (z) X^{k+\gd}
\end{equation}
with $\gb_{-1} =0$, then $\Phi (z,X)$ is a Jacobi-like form belonging to
$\mJ_{\xi -2m -2\gd} (\G)_\gd$.  Using this and \eqref{E:x9}, we obtain
\[ F = \Pi^\gd_m \Phi .\]
Thus the statement is true for $m$, and the proof of the theorem is
complete.
\end{proof}

Given a quasimodular polynomial $F_m(z,X) \in QP^m_{\xi} (\G)$ with $\xi
> 2m \geq 0$ and $\gd \geq 0$, the proof of Theorem \ref{T:mn}
provides us with a canonical way of obtaining a Jacobi-like form
\[ \Phi_m (z,X) = \sum^\infty_{k=0} \phi_{m, k} (z) X^{k +\gd} \in
\mJ_{\xi -2m -2\gd} (\G)_\gd \]
satisfying $\Pi^\gd_m \Phi_m = F_m$.  Indeed, the coefficients of $\Phi_m
(z,X)$ can be determined recursively as follows.  Using \eqref{E:m2}, we
have
\[ (m!) (\wh{\Xi}_\gd (\fS_m F_m)) (z,X) = m! (\xi -2m -1)! \sum^\infty_{k=0}
\frac {(\fS_m F_m)^{(k)} (z)} {k! (k + \xi -2m -1)!} X^{k +\gd} .\]
From this and \eqref{E:v3} we obtain
\[ \Phi_m (z,X) = \sum^\infty_{k=0} \biggl( \frac {m! (\xi -2m -1)!}
{k! (k + \xi -2m -1)!}  (\fS_m F_m)^{(k)} + \phi_{m-1, k-1}\biggr) (z)
X^{k +\gd} .\]
Thus we see that
\[ \phi_{m,k} = \frac {m! (\xi -2m -1)!} {k! (k + \xi -2m -1)!}
(\fS_m F_m)^{(k)}+ \phi_{m-1, k-1} \]
for all $k, m \geq 1$ with
\[ \phi_{0,k} = \frac {F_0^{(k)}} {k! (k + \xi -1)!} ;\]
here we note that $F_0 (z,X)$ does not contain $X$ and is a function of
$z$ only, so that
\[ F^{(k)}_0 (z,X) = \frac {\pa^k} {\pa z^k} F_0 (z,X) \]
for $k \geq 1$.

From the above construction we see that the map $F_m \mapsto \Phi_m$
determines a linear map
\begin{equation} \label{E:3p}
\mL^\gd_{m, \xi}: QP^m_\xi (\G) \to \mJ_{\xi -2m -2\gd} (\G)_\gd ,
\end{equation}
which may be considered as a canonical lifting from quasimodular forms to
Jacobi-like forms such that
\begin{equation} \label{E:nn}
(\Pi^\gd_m \circ \mL^\gd_{m,\xi}) F = F
\end{equation}
for all $F(z,X) \in QP^m_\xi (\G)$. In the case where $m=0,$ we
define $ \mL^\gd_{0, \xi}$ to be the map
\[ \wh{\Xi}_\gd: M_{\xi} \to \mJ_{\xi-2\gd}(\G)_{\gd} .\]
From the Hecke equivariance of
the maps $\Pi^\gd_m$, $\wh{\Xi}_\gd$ and $\fS_m$ we obtain the
same property for $\mL^\gd_{m, \xi}$, so that the diagram
\[\begin{CD}
QP^m_\xi (\G) @> \mL^\gd_{m, \xi} >> \mJ_{\xi -2m -2\gd} (\G)_\gd\\
@V T^Q_\xi (\ga) VV @VV T^J_{\xi -2m -2\gd} (\ga) V\\
QP^m_\xi (\G) @> \mL^\gd_{m, \xi} >> \mJ_{\xi -2m -2\gd} (\G)_\gd
\end{CD}\]
is commutative for $\ga \in \wt{\G}$.  We can also consider the lifting
\[ \wt{\mL}^\gd_{m, \xi}: QM^m_\xi (\G) \to \mJ_{\xi -2m -2\gd} (\G)_\gd \]
of quasimodular forms given by
\[ \wt{\mL}^\gd_{m, \xi} = \mL^\gd_{m, \xi} \circ \mQ^\gd_\xi ,\]
where $\mQ^\gd_\xi$ is as in \eqref{E:tp}.

Given integers $m$, $\gd$ and $\xi$ with $\gd, m \geq 0$, there is a short
exact sequence of the form
\begin{equation} \label{E:wb}
0 \to \mJ_{\xi -2m -2\gd} (\G)_{\gd +m} \xrightarrow{\iota}  \mJ_{\xi -2m
-2\gd} (\G)_\gd \xrightarrow{\Pi^\gd_m} QP^m_{\xi} (\G) \to 0 ,
\end{equation}
where $\iota$ is the inclusion map.  From the relation \eqref{E:nn} it
follows that the short exact sequence in \eqref{E:wb} splits.  Similarly,
there is also a split short exact sequence of the form
\[ 0 \to \mJ_{\xi -2m -2\gd} (\G)_{\gd +m} \xrightarrow{\iota}  \mJ_{\xi -2m
-2\gd} (\G)_\gd \xrightarrow{\wt{\Pi}^\gd_m} QM^m_{\xi} (\G) \to 0 ,\]
where $\wt{\Pi}^\gd_m = \fS_0 \circ \Pi^\gd_m$.

\begin{rem}
Since $\fS_mF \in M_{\xi -2m} (\G)$ for $F (z,X) \in QP^m_\xi (\G)$, we
see that $\fS_m F =0$ if $\xi < 2m$.  Thus the only case that was not
covered in Theorem \ref{T:mn} is when $\xi = 2m$.  As an example of a
quasimodular form for $\xi = 2m$, we can consider the Eisenstein series
$E_2$ given by
\begin{equation} \label{E:e2}
E_2 (z) = 1- 24 \sum^\infty_{n=1} \gs_1 (n) e^{2\pi i nz} ,
\end{equation}
where $\gs_1 (n) = \sum_{d \mid n} d$.  Then it is known that $E_2$
satisfies the relation
\begin{equation} \label{E:e3}
(E_2 \mid_2 \g) (z) = E_2 (z) + \frac 6{i\pi} \fK(\g,z)
\end{equation}
for all $z \in \mH$ and $\g \in SL(2, \mb Z)$.  Hence $E_2$ is a
quasimodular form belonging to $QM^1_2 (\G)$ with $\G = SL(2, \mb Z)$, and
the associated quasimodular polynomial is given by
\[ (\mQ^1_2 E_2) (z, X) = E_2 (z) + \frac 6{i\pi} X \in QP^1_2
(\G) .\]
Using $\g = \sm 0&-1\\ 1&0 \esm$, the relation \eqref{E:e3} can be written
in the form
\[ E_2 (-1/z) = z^2 E_2(z) +\frac {12z}{2\pi i} .\]
Using this and induction, it can be shown that
\begin{align*}
\frac{E_2^{(n)} (-1/z)} {n! (n+1)!} &= \sum_{\ell=0}^n  \frac{
E_2^{(\ell)}(z) }{(2\pi i)^{n-\ell}(n-\ell)!\ell!(\ell+1)! }z^{2+\ell+n} +
\frac{12 z^{n+1}}
{(2\pi i)^{n+1}(n+1)! }\\
&=\sum_{\ell=-1}^n \frac{ E_2^{(\ell)}(z)} {(2\pi i)^{(n-\ell)}(n-\ell)!
\ell!(\ell+1)!}z^{2+\ell+n},
\end{align*}
where $(-1)!= 1/12$ and $E_2^{(-1)}(z)=1$.  We now set
\[ \Phi_{E_2} (z, X) =\sum_{\ell =-1}^{\infty} \frac {E_{2}^{(\ell)}(z)}
{\ell!(\ell+1)!}(2\pi i X)^{\ell}. \]
Then it can be shown that
\[ \Phi_{E_2} (-1/z, z^{-2} X) = z^2 e^{X/z}
\Phi_{E_2} (z,X) ,\quad \Phi_{E_2} (z +1, X) = \Phi_{E_2} (z,X) ;\]
hence we have
\[ \Phi_{E_2} (\g z, \fJ (\g, z)^{-2} X) = \fJ (\g, z)^2
e^{- \fK (\g,z) X} \Phi_{E_2} (z,X) \]
for $\g = \sm 0&-1\\ 1&0 \esm, \sm 1&1\\ 0&1 \esm$.  Since $SL(2, \bZ)$ is
generated by $\sm 0&-1\\ 1&0 \esm$ and $\sm 1&1\\ 0&1 \esm$, we see that
$\Phi_{E_2} (z, X)$ is a Jacobi-like form belonging to $\mJ_2 (\G)$.  On
the other hand, we have
\[ \Pi_1^{0} (\Phi_{E_2}) = \mQ^1_2 E_2 ,\]
and therefore the Jacobi-like form $\Phi_{E_2} (z,X)$ is a lifting of the
quasimodular polynomial $\mQ^1_2 E_2 (z,X)$.
\end{rem}

\begin{ex}
Let $f \in M_w (\G)$ be a modular form of weight $w >0$.  Then its $m$-th
derivative with $m >0$ is a quasimodular form belonging to $QM^m_{w+2m}
(\G)$, and it can be shown that
\[ (f^{(m)} \mid_{w +2m} \g) (z) = \sum^m_{k=0} k! \bi mk \bi {w +m-1}k
f^{(m-k)} (z) \fK (\g, z)^k \]
for all $z \in \mH$ and $\g \in \G$ (see e.g.\ \cite{MR07}).  Thus, if we
denote the corresponding quasimodular polynomial by $F(z,X) \in QP^m_{w +2m}
(\G)$, we have
\[ F (z,X) = (\mQ^m_{w +2m} f^{(m)}) (z,X) = \sum^m_{k=0} k! \bi mk
\bi {w +m-1}k f^{(m-k)} (z) X^k ,\]
so that
\[ \fS_m F = m! \bi {w+m-1}m f \in M_w (\G) ,\]
\begin{equation} \label{E:61}
\wh{\Xi}_\gd (\fS_m F) (z,X) = (w+m-1)! \sum^\infty_{\ell =0} \frac
{f^{(\ell)} (z)} {\ell! (\ell +w -1)!} X^{\ell +\gd} .
\end{equation}
Hence we see that
\begin{align*}
\Pi_m^{\gd} (\wh{\Xi}_\gd (\fS_m F)) (z,X) &= (w+m-1)! \sum^m_{r
=0} \frac
{f^{(m-r)} (z)} {r! (m-r)! (m-r +w -1)!} X^r\\
&= \frac{1}{m!} F(z,X),
\end{align*}
and therefore the Jacobi-like form in \eqref{E:61} is a lifting of the
quasimodular polynomial $(\mQ^m_{w +2m} f^{(m)}) (z,X)$.
\end{ex}

\begin{ex}
Let $E_2$ be the Eisenstein series in \eqref{E:e2}.  We consider a modular
form $f \in M_w (\G)$ with $w >0$, and set
\[ F(z,X) = f(z) (\mQ^1_2 E_2) (z) = f(z) E_2 (z) + \frac 6{i\pi} f(z) X
,\]
which is a quasimodular polynomial belonging to $QP^1_{w+2} (\G)$.  Then
we have
\[ \wh{\Xi}_{\gd} (\fS_1 F) (z,X) = \frac 6{i\pi} (w-1)! \sum^\infty_{\ell =0} \frac
{f^{(\ell)} (z)} {\ell! (\ell +w -1)!} X^{\ell +\gd} ,\]
so that
\[ \Pi^\gd_1 (\wh{\Xi}_{\gd} (\fS_1 F)) (z,X) = \frac 6{i\pi} (w-1)! \biggl(
\frac {f'(z)} {w!} + \frac {f(z)} {(w-1)!} X \biggr) = \frac
6{i\pi w} f' (z) + \frac 6{i\pi} f(z) X . \] Thus we obtain
\[ F(z,X) - \Pi^\gd_1 (\wh{\Xi}_{\gd} (\fS_1 F)) (z,X) = f(z) E_2 (z)
- \frac 6{i\pi w} f' (z) \in QP^0_{w+2} (\G) = M_{w+2} (\G) ,\]
\[ \wh{\Xi}_{\gd +1} (F - \Pi^\gd_1 (\wh{\Xi}_{\gd} (\fS_1 F))) (z,X) = (w+1)!
\sum^\infty_{\ell =0} \frac {1} {\ell! (\ell +w +1)!} \Bigl( f E_2
- \frac 6{i\pi w} f' \Bigr)^{(\ell)} X^{\ell +\gd} .\] Hence we
see that $F(z,X) = \Pi^\gd_1 \Phi (z,X)$ with
\[ \Phi (z,X) = \sum^\infty_{k=0} \phi_k (z) X^{k+\gd+1} ,\]
where $\phi_0 = (6/i\pi) f$ and
\begin{align*}
\phi_k &= \frac {6 (w-1)!} {i\pi k! (k+w-1)!} f^{(k)} + \frac {(w+1)!}
{(k-1)! (k+w)!} \Bigl( f E_2 - \frac 6{i\pi w} f' \Bigr)^{(k-1)}\\
&= \frac {(w-1)!} {k! (k+w)!} \biggl( \frac 6{i\pi} (k+w) f^{(k)} + k! w
(w+1) \Bigl( f E_2 - \frac 6{i\pi w} f' \Bigr)^{(k-1)} \biggr)
\end{align*}
for all $k \geq 1$.
\end{ex}

The canonical lifting map $\mL^\gd_{m,\xi}$ in \eqref{E:3p} can also be
used to determine linear maps of quasimodular forms.  Indeed, if $n$ is
another nonnegative integer, we set
\[ \fT^\gd_\xi (m,n) = \Pi^\gd_n \circ \mL^\gd_{m,\xi} .\]
Then since
\[ \Pi^\gd_n (\mJ_{\xi -2m -2\gd} )(\G)_\gd \to QP^n_{\xi +2n -2m} (\G) ,\]
we obtain a linear map
\[ \fT^\gd_\xi (m,n): QP^m_\xi (\G) \to QP^n_{\xi +2n -2m} (\G) \]
of quasimodular forms such that $\fT^\gd_\xi (m,m)$ is the identity map on
$QP^m_\xi (\G)$.

\section{\bf{Pseudodifferential operators}} \label{S:jj}

Jacobi-like forms for a discrete subgroup $\G \subset SL(2, \bR)$ are known
to be in one-to-one correspondence with $\G$-automorphic pseudodifferential
operators (see \cite{CM97}).  The noncommutative multiplication operation
on the space of pseudodifferential operators given by the Leibniz rule
determines a natural Lie algebra structure on the same space.  In this
section we use the above correspondence and the liftings discussed in
Section \ref{S:jq} to construct Lie brackets on the space of quasimodular
polynomials.

A pseudodifferential operator over $\mF$ is a formal Laurent series in the
formal inverse $\pa^{-1}$ of $\pa = d/dz$ with coefficients in $\mF$ of
the form
\[ \Psi (z) = \sum^u_{k= -\infty} h_k (z) \pa^k \]
with $u \in \mbb Z$ and $h_k \in \mF$ for each $k \leq u$.  We denote by
$\sdf$ the space of all pseudodifferential operators over $\mF$. Then the
group $SL(2, \bR)$ acts on $\sdf$ on the right by
\[
(\Psi\circ \g) (z)  = \Psi (\g z) = \sum^u_{k= -\infty} h_k (\g z) (\fJ
(\g,z)^2 \pa)^k
\]
for all $\g \in SL(2, \bC)$, where $\fJ (\g,z)$ is as in \eqref{E:kz}.

\begin{dfn}
Given a discrete subgroup $\G$ of $SL(2, \bR)$, an element $\Psi (z) \in
\sdf$ is an {\em automorphic pseudodifferential operator for $\G$\/} if it
satisfies
\[ \Psi \circ \g = \Psi \]
for all $\g \in \G$.  We denote by $\sdf^\G$ the space of all automorphic
pseudodifferential operators for $\G$.
\end{dfn}

If $\ga$ is an integer, we denote by $\sdf_\ga$ the subspace of $\sdf$
consisting of the pseudodifferential operators of the form
\[ \sum^\infty_{k=0} \psi_k (z) \pa^{\ga -k} \]
with $\psi_k \in \mF$ for all $k \geq 0$.  We also set
\[ \sdf_\ga^\G = \sdf_\ga \cap \sdf^\G .\]

We now consider an isomorphism between the space of formal power series
and that of pseudodifferential operators following the idea of Cohen,
Manin, and Zagier in \cite{CM97}.  We shall see below that this isomorphism
is equivariant with respect to certain actions of $SL(2, \bR)$.  Given a
formal power series
\[
F (z, X) = \sum^\infty_{k=0} f_k (z) X^{k+\gd} \in \mF[[X]]_\gd
\]
and a pseudodifferential operator
\begin{equation} \label{E:7v}
\Psi (z) = \sum^\infty_{k=0} \psi_{k} (z) \pa^{-k-\ve} \in \sdf_{-\ve}
\end{equation}
with $\gd, \ve > 0$, we set
\begin{equation} \label{E:nh}
(\mI_\xi^\pa F) (z) = \sum^\infty_{k=0} C_{k+\gd+\xi} f_k (z) \pa^{-k
  -\gd-\xi},
\end{equation}
\begin{equation} \label{E:nm}
(\mI_\xi^X \Psi) (z, X) = \sum^\infty_{k=0} C_{k+\ve}^{-1} \psi_k (z) X^{k
+\ve -\xi}
\end{equation}
for each nonnegative integer $\xi$, where $C_\eta$ with $\eta > 0$ denotes
the integer
\begin{equation} \label{E:cf}
C_\eta = (-1)^\eta \eta! (\eta-1)! .
\end{equation}
Then it can be easily seen that
\begin{equation} \label{E:th}
(\mI^X_\xi \circ \mI^\pa_\xi) F = F, \quad (\mI^\pa_\xi \circ \mI^X_\xi)
\Psi = \Psi,
\end{equation}
and therefore we obtain the maps
\[
\mI^\pa_\xi: \mF[[X]]_\gd \to \sdf_{-\gd -\xi}, \quad \mI^X_\xi:
\sdf_{-\ve} \to \mF[[X]]_{\ve -\xi}
\]
that are complex linear isomorphisms.  The following proposition shows that
these isomorphisms are $SL(2, \bR)$-equivariant.

\begin{prop} \label{P:hw}
Let $F (z, X) \in \mF[[X]]_\gd$ and $\Psi (z) \in \sdf_{-\ve}$, and let
$\xi$ be a nonnegative integer.  Then we have
\[
(\mI_\xi^\pa F) \circ \g = \mI_\xi^\pa (F \mid^J_{2\xi} \g), \quad
(\mI_\xi^X \Psi) \mid^J_{2\xi} \g = \mI_\xi^X (\Psi \circ \g)
\]
for all $\g \in SL(2,\bR)$.
\end{prop}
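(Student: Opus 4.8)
The plan is to prove the first identity $(\mI_\xi^\pa F)\circ\g=\mI_\xi^\pa(F\mid^J_{2\xi}\g)$ and then to deduce the second from it. Indeed, once the first holds for all $F\in\mF[[X]]_\gd$, set $\Psi=\mI_\xi^\pa F$ (so $\Psi\in\sdf_{-\gd-\xi}$, i.e.\ $\ve=\gd+\xi$); applying $\mI_\xi^X$ to both sides and using that $\mI_\xi^X$ and $\mI_\xi^\pa$ are mutually inverse by \eqref{E:th} turns the first identity into $\mI_\xi^X(\Psi\circ\g)=(\mI_\xi^X\Psi)\mid^J_{2\xi}\g$, which is the second. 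Since both sides of the first identity are $\bC$-linear in $F$, it is enough to compare the coefficient of each power of $\pa$.

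The computational engine is the single operator identity
\[
(\fJ(\g,z)^2\pa)^n=\fJ(\g,z)^{\,n+1}\,\pa^n\,\fJ(\g,z)^{\,n-1}\qquad(n\in\bZ),
\]
which I would establish by induction in both directions from the trivial case $n=0$. Writing $\fJ=\fJ(\g,z)$, the inductive step reduces, after using $\fJ'=\fK\fJ$, to the Leibniz rule for pseudodifferential operators, $\pa^n g=\sum_{i\ge0}\binom{n}{i}g^{(i)}\pa^{n-i}$; applied to $g=\fJ$ this is exact for every $n\in\bZ$ because $\fJ(\g,z)=cz+d$ is affine in $z$, so $\fJ''=0$ kills all higher terms and leaves $\pa^n\fJ=\fJ\pa^n+n\fJ'\pa^{n-1}$. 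For negative exponents $(\fJ^2\pa)^{-N}$ is the formal inverse in $\sdf$, and the case $n=-1$ is simply $(\fJ^2\pa)^{-1}=\pa^{-1}\fJ^{-2}$.

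Taking $n=-N$ with $N=k+\gd+\xi\ge1$ and expanding $\pa^{-N}\fJ^{-N-1}$ by the same Leibniz rule, together with $(\fJ^a)^{(j)}=\tfrac{a!}{(a-j)!}\fK^j\fJ^a$ (a consequence of $\fJ'=\fK\fJ$ and $\fK'=-\fK^2$), collapses the factorials exactly onto the ratio $C_{N+j}/C_N$ and yields the key expansion
\[
(\fJ^2\pa)^{-N}=\fJ^{-2N}\sum_{j\ge0}\frac{C_{N+j}}{C_N\,j!}\,(-\fK)^j\,\pa^{-N-j},
\]
with $C_\eta$ as in \eqref{E:cf}. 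Granting this, I expand the left side as $(\mI_\xi^\pa F)\circ\g=\sum_k C_{k+\gd+\xi}(f_k\circ\g)(\fJ^2\pa)^{-k-\gd-\xi}$ and substitute the expansion, while on the right I compute $F\mid^J_{2\xi}\g=\fJ^{-2\xi}e^{-\fK X}F(\g z,\fJ^{-2}X)$ from \eqref{E:yt}, collect powers of $X$ using the series for $e^{-\fK X}$, and apply $\mI_\xi^\pa$ coefficientwise via \eqref{E:nh}. Comparing the coefficient of $\pa^{-(n+\gd+\xi)}$ on the two sides amounts to the reindexing $j=\ell$, $k=n-\ell$, after which the factors $f_k\circ\g$, the powers $\fJ^{-2(k+\gd+\xi)}$, the signs $(-\fK)^j$, and the constants $C_{k+j+\gd+\xi}$ match term by term.

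The main obstacle is the key expansion of $(\fJ^2\pa)^{-N}$: one must push the conjugation identity through to negative exponents and then verify that the generalized‑Leibniz bookkeeping reproduces precisely the constants $C_{N+j}/C_N$ built into $\mI_\xi^\pa$ and $\mI_\xi^X$ by \eqref{E:nh}--\eqref{E:cf}. Once these constants are checked the equivariance is purely formal; in particular no reduction to generators of $SL(2,\bR)$ is needed, since $\fJ$ and $\fK$ enter only through the affine relations $\fJ''=0$, $\fJ'=\fK\fJ$, and $\fK'=-\fK^2$, which hold for every $\g\in SL(2,\bR)$.
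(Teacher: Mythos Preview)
The paper does not give its own proof of this proposition; it simply cites \cite{L07a}. Your argument is correct and self-contained: the conjugation identity $(\fJ(\g,z)^2\pa)^n=\fJ(\g,z)^{n+1}\pa^{\,n}\fJ(\g,z)^{n-1}$ (which indeed follows from $\fJ''=0$ via $\pa^n\fJ=\fJ\pa^n+n\fJ'\pa^{n-1}$ and the obvious inversion for negative $n$) is exactly the standard device used by Cohen--Manin--Zagier, and your expansion of $(\fJ^2\pa)^{-N}$ does reproduce the constants $C_{N+j}/(C_N\,j!)$. The coefficient comparison you describe then matches term by term, and your deduction of the second identity from the first via \eqref{E:th} is immediate. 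So your proposal supplies precisely the proof the paper omits, by what is in fact the expected route.
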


\begin{proof}
See \cite[Proposition 1.1]{L07a}.
\end{proof}

\begin{cor} \label{C:3k}
Let $F (z, X)$, $\Psi (z)$ and $\xi$ be as in Proposition \ref{P:hw}.

(i) The formal power series $F (z, X)$ is a Jacobi-like form belonging to
$\mJ_{2 \xi} (\G)_\gd$ if and only if $\mI_\xi^\pa F \in \sdf_{-\gd
-\xi}^\G$.

(ii) The pseudodifferential operator $\Psi (z)$ belongs to
$\sdf_{-\ve}^\G$ if and only if $\mI_\xi^X \Psi \in \mJ_{2 \xi} (\G)_{\ve
-\xi}$.
\end{cor}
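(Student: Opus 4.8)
The plan is to deduce both equivalences directly from the equivariance in Proposition \ref{P:hw}, together with the fact that $\mI_\xi^\pa$ and $\mI_\xi^X$ are mutually inverse isomorphisms by \eqref{E:th} (in particular, injective). The key preliminary observation is that the grading conditions are automatic from the structure of the maps: since $F \in \mF[[X]]_\gd$ by hypothesis, we have $\mI_\xi^\pa F \in \sdf_{-\gd-\xi}$ with no further assumptions, so membership in $\sdf_{-\gd-\xi}^\G$ reduces to the single invariance condition $(\mI_\xi^\pa F) \circ \g = \mI_\xi^\pa F$ for all $\g \in \G$. Likewise $\mI_\xi^X \Psi \in \mF[[X]]_{\ve-\xi}$ automatically, so for part (ii) only the Jacobi-like invariance must be verified.

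For part (i) I would proceed as follows. By the first identity of Proposition \ref{P:hw}, for each $\g \in \G$ we have $(\mI_\xi^\pa F) \circ \g = \mI_\xi^\pa(F \mid^J_{2\xi} \g)$. Since $\mI_\xi^\pa$ is injective, the equation $(\mI_\xi^\pa F) \circ \g = \mI_\xi^\pa F$ holds if and only if $F \mid^J_{2\xi} \g = F$. Letting $\g$ range over $\G$, this shows that $\mI_\xi^\pa F$ is $\G$-automorphic precisely when $F$ satisfies the Jacobi-like invariance of weight $2\xi$, i.e.\ when $F \in \mJ_{2\xi}(\G)$; combined with $F \in \mF[[X]]_\gd$ this is exactly $F \in \mJ_{2\xi}(\G)_\gd$.

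Part (ii) is entirely symmetric. Using the second identity of Proposition \ref{P:hw}, namely $(\mI_\xi^X \Psi) \mid^J_{2\xi} \g = \mI_\xi^X(\Psi \circ \g)$, together with the injectivity of $\mI_\xi^X$, the equation $(\mI_\xi^X \Psi) \mid^J_{2\xi} \g = \mI_\xi^X \Psi$ holds if and only if $\Psi \circ \g = \Psi$. Hence $\mI_\xi^X \Psi$ is invariant under $\mid^J_{2\xi}$ for every $\g \in \G$ exactly when $\Psi$ is $\G$-automorphic, and combining this with $\mI_\xi^X \Psi \in \mF[[X]]_{\ve-\xi}$ yields the stated equivalence.

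There is no substantive obstacle here, since all the analytic content is already encoded in the equivariance of Proposition \ref{P:hw}; the corollary is essentially a transport of structure along the isomorphisms. The only point that rewards care is the bookkeeping of indices --- checking that $\mI_\xi^\pa$ carries the $\gd$-graded piece to order $-\gd-\xi$ and that $\mI_\xi^X$ carries order $-\ve$ to the $(\ve-\xi)$-graded piece --- so that the two invariance conditions land in precisely the spaces $\sdf_{-\gd-\xi}^\G$ and $\mJ_{2\xi}(\G)_{\ve-\xi}$ claimed in the statement.
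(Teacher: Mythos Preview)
Your proof is correct and follows the same route as the paper, which simply records that the corollary ``follows immediately from Proposition \ref{P:hw}.'' You have merely spelled out the one-line argument in full, using the injectivity coming from \eqref{E:th} and tracking the grading indices; nothing additional is needed.
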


\begin{proof}
This follows immediately from Proposition \ref{P:hw}.
\end{proof}

If $\Psi (z) \in \sdf_{-\ve}$ is of the form \eqref{E:7v}, we set
\begin{equation} \label{E:z8}
(\PPi^{-\ve}_m \Psi) (z,X) = \sum^m_{r=0} \frac {\psi_{m-r} (z)} {r!
C_{m-r+\ve}} X^r ,
\end{equation}
where $C_{m-r+\ve}$ is as in \eqref{E:cf}.

\begin{lem} \label{L:5x}
If $\Psi (z) \in \sdf_{-\ve}^\G$ and $F(z,X) \in \mJ_{2\xi} (\G)_{\ve
-\xi}$, then we have
\begin{equation} \label{E:17}
\PPi^{-\ve}_m \Psi = \Pi^{\ve -\xi}_m (\mI^X_\xi \Psi), \quad \Pi^{\ve
-\xi}_m F = \PPi^{-\ve}_m (\mI^\pa_\xi F)
\end{equation}
for all $\xi \in \bZ$, where $\Pi^{\ve -\xi}_m$ is as in \eqref{E:7g}.  In
particular, $(\PPi^{-\ve}_m \Psi) (z,X)$ is a quasimodular polynomial
belonging to $QP^m_{2m +2\ve}$.
\end{lem}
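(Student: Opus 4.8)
The plan is to verify the two identities in \eqref{E:17} by direct substitution of the defining formulas, the decisive point being that the scalars $C_{k+\ve}$ appearing in \eqref{E:nh} and \eqref{E:nm} are precisely the constants that reconcile the normalization $1/(r!\,C_{m-r+\ve})$ of $\PPi^{-\ve}_m$ in \eqref{E:z8} with the normalization $1/r!$ of $\Pi^{\ve-\xi}_m$ in \eqref{E:mc}. Throughout I would keep the index relation $\gd=\ve-\xi$ in force, since it is what ties the power-series index of $\mF[[X]]_{\ve-\xi}$ to the order $-\ve$ of the operator. There is no genuine analytic obstacle here: the whole lemma is index bookkeeping, and the only care required is to align the three conventions (the series index $\gd$, the operator order $\ve$, and the weight parameter $\xi$) through $\gd=\ve-\xi$ and to track the cancellation of the $C_{m-r+\ve}$.

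For the left-hand identity I would start from $\Psi(z)=\sum_{k\geq 0}\psi_k(z)\pa^{-k-\ve}$. Formula \eqref{E:nm} gives $(\mI^X_\xi\Psi)(z,X)=\sum_{k\geq 0}C_{k+\ve}^{-1}\psi_k(z)X^{k+\ve-\xi}$, an element of $\mF[[X]]_{\ve-\xi}$ whose coefficient of $X^{k+\ve-\xi}$ is $C_{k+\ve}^{-1}\psi_k$. Applying $\Pi^{\ve-\xi}_m$ as in \eqref{E:mc} picks out the coefficients indexed by $m-r$ and divides by $r!$, yielding $\sum_{r=0}^m \frac{1}{r!}C_{m-r+\ve}^{-1}\psi_{m-r}(z)X^r$, which is exactly $\PPi^{-\ve}_m\Psi$ by \eqref{E:z8}.

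For the right-hand identity I would run the mirror computation, as anticipated by the inverse relations \eqref{E:th}. Writing $F(z,X)=\sum_{k\geq 0}f_k(z)X^{k+\ve-\xi}\in\mJ_{2\xi}(\G)_{\ve-\xi}$, formula \eqref{E:nh} with $\gd=\ve-\xi$, so that $\gd+\xi=\ve$, produces $(\mI^\pa_\xi F)(z)=\sum_{k\geq 0}C_{k+\ve}f_k(z)\pa^{-k-\ve}\in\sdf_{-\ve}$, whose coefficient in order $-k-\ve$ is $\psi_k=C_{k+\ve}f_k$. Feeding this into \eqref{E:z8}, the factors $C_{m-r+\ve}$ cancel and I am left with $\sum_{r=0}^m\frac{1}{r!}f_{m-r}(z)X^r=\Pi^{\ve-\xi}_m F$.

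Finally, for the ``in particular'' clause I would combine the first identity with the transfer results already in hand. Since $\Psi\in\sdf_{-\ve}^\G$, Corollary \ref{C:3k}(ii) gives $\mI^X_\xi\Psi\in\mJ_{2\xi}(\G)_{\ve-\xi}$, and then the map \eqref{E:7g} of Proposition \ref{P:p4}, applied with weight $\gl=2\xi$ and index $\gd=\ve-\xi$, sends it into $QP^m_{2\xi+2m+2(\ve-\xi)}(\G)=QP^m_{2m+2\ve}(\G)$. By the identity just proved $\PPi^{-\ve}_m\Psi$ coincides with this image, hence lies in $QP^m_{2m+2\ve}(\G)$. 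I would close by remarking that the output weight $2m+2\ve$ is visibly independent of $\xi$, the consistency check confirming that the quasimodular polynomial produced does not depend on the auxiliary isomorphism $\mI^X_\xi$ used to realize it.
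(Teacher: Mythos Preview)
Your proposal is correct and follows essentially the same approach as the paper: a direct coefficient computation for the first identity via \eqref{E:nm}, \eqref{E:mc}, and \eqref{E:z8}, and then Corollary~\ref{C:3k} together with Proposition~\ref{P:p4} for the ``in particular'' clause. The only minor difference is that the paper deduces the second identity in \eqref{E:17} from the first via the inverse relations \eqref{E:th} and Corollary~\ref{C:3k}, whereas you redo the mirror computation explicitly; both are equally valid and amount to the same bookkeeping.
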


\begin{proof}
Let $\Psi (z) \in \sdf_{-\ve}$ be as in \eqref{E:7v}, so that $(\mI^X_\xi
\Psi) (z,X)$ is given by \eqref{E:nm}.  Using \eqref{E:mc}, we obtain
\[ (\Pi^{\ve -\xi}_m (\mI^X_\xi \Psi)) (z, X) = \sum^m_{r=0}
\frac 1{r!} C_{m-r+\ve}^{-1} \psi_{m-r} (z) X^r = (\PPi^{-\ve}_m \Psi)
(z,X) .\]
Since $(\mI^X_\xi \Psi) (z,X) \in \mJ_{2\xi} (\G)_{\ve -\xi}$, we see that
$(\Pi^{\ve -\xi}_m (\mI^X_\xi \Psi)) (z, X)$ belongs to $QP^m_{2m +2\ve}$
by Proposition \ref{P:p4} and that the first relation in \eqref{E:17}
holds.  The second relation follows from this and Corollary \ref{C:3k}.
\end{proof}

By Lemma \ref{L:5x} there is a linear map
\[ \PPi^{-\ve}_m: \sdf_{-\ve}^\G \to QP^m_{2m +2\ve} \]
such that the diagram
\[
\begin{CD}
\sdf_{-\ve}^\G @> \mI^X_\xi >> \mJ_{2\xi} (\G)_{\ve -\xi}\\
@V \PPi^{-\ve}_m VV @VV \Pi^{\ve -\xi}_m V\\
QP^m_{2m +2\ve} (\G) @= QP^m_{2m +2\ve} (\G)
\end{CD}
\]
is commutative.  We also see that there is a short exact sequence of the
form
\begin{equation} \label{E:0b}
0 \to \sdf_{m-\ve}^\G \to \sdf_{-\ve}^\G \xrightarrow{\PPi^{-\ve}_m}
QP^m_{2m +2\ve} (\G) \to 0 .
\end{equation}
Similarly, there is also a split short exact sequence of the form
\begin{equation} \label{E:6b}
0 \to \sdf_{m-\ve}^\G \to \sdf_{-\ve}^\G
\xrightarrow{{^\pa}\wt{\Pi}^{-\ve}_m} QM^m_{2m +2\ve} (\G) \to 0 ,
\end{equation}
where ${^\pa}\wt{\Pi}^\gd_m = \fS_0 \circ \PPi^\gd_m$. If
\[ \mL^\gd_{m, 2m +2\ve}: QP^m_{2m +2\ve} (\G) \to \mJ_{2\ve -2\gd} (\G)_\gd \]
is as in \eqref{E:3p}, we set
\begin{equation} \label{E:3t}
{^\pa}\mL^\gd_{m, 2m +2\ve} = \mI^\pa_{\ve -\gd} \circ \mL^\gd_{m, 2m
  +2\ve}: QP^m_{2m +2\ve} (\G) \to \sdf_{-\ve}^\G
\end{equation}
Using \eqref{E:nn}, \eqref{E:17} and \eqref{E:3t}, we have
\begin{align*}
(\PPi^{-\ve}_m \circ {^\pa}\mL^\gd_{m, 2m +2\ve}) F &= (\Pi^\gd_m \circ
\mI^X_{\ve -\gd}) \circ (\mI^\pa_{ve-\gd} \circ \mL^\gd_{m, 2m +2\ve}) F\\
&= (\Pi^\gd_m \circ \mL^\gd_{m, 2m +2\ve}) F = F
\end{align*}
for all $F(z,X) \in QP^m_{2m +2\ve} (\G)$.  Thus the map ${^\pa}\mL^\gd_{m,
2m +2\ve}$ is a lifting of quasimodular forms to automorphic
pseudodifferential operators, and both of the short exact sequences
\eqref{E:0b} and \eqref{E:6b} split.

The noncommutative multiplication operation in $\sdf$ defined by the
Leibniz rule determines the Lie bracket
\[ [\;\, ,\, ]^\pa: \sdf \times \sdf \to \sdf ,\]
on the same space given by
\begin{equation} \label{E:re}
[\Psi (z), \Phi (z)]^\pa = \Psi (z) \Phi (z) - \Phi (z) \Psi (z)
\end{equation}
for all $\Psi (z), \Phi (z) \in \sdf$, which provides $\sdf$ with a
structure of a complex Lie algebra.

Given $\gd_1, \gd_2 >0$ and pseudodifferential operators $\Psi (z) \in
\sdf_{-\gd_1}$ and $\Phi (z) \in \sdf_{-\gd_2}$ of the form
\[ \Psi (z) = \sum^\infty_{k =0} \psi_k (z) \pa^{-k -\gd_1}, \quad \Phi
(z) = \sum^\infty_{k =0} \phi_k (z) \pa^{-k -\gd_2} ,\]
we have
\[
\Psi (z) \Phi (z) = \sum^\infty_{k =0} \sum^\infty_{\ell =0}
\sum^\infty_{q =0} \bi {-k -\gd_1} {q} \psi_k (z) \phi_\ell^{(q)} (z)
\pa^{-k
  -\ell -q - \gd_1 -\gd_2} .
\]
Changing the indices from $k, \ell, q$ to $r, p, q$ with $r = k +\ell +q$
and $p = \ell +q$, we obtain
\[ \Psi (z) \Phi (z) = \sum^\infty_{r =0} \sum^r_{p =0}
\sum^p_{q =0} \bi {-r -\gd_1 +p} {q} \psi_{r-p} (z) \phi_{p -q}^{(q)} (z)
\pa^{-r - \gd_1 -\gd_2} .\]
Hence we see that
\begin{equation} \label{E:ws}
[\Psi (z), \Phi (z)]^\pa = \Psi (z) \Phi (z) - \Phi (z) \Psi (z) =
\sum^\infty_{r =0}  \eta (\Psi, \Phi)_r (z) \pa^{-r - \gd_1 -\gd_2} ,
\end{equation}
where
\begin{align*}
\eta (\Psi, \Phi)_r (z) = \sum^r_{p =0} \sum^p_{q =0} \biggl(
\bi {-r -\gd_1 +p} {q}& \psi_{r-p} (z) \phi_{p -q}^{(q)} (z)\\
&- \bi {-r -\gd_2 +p} {q} \phi_{r-p} (z) \psi_{p -q}^{(q)} (z) \biggr)
\end{align*}
for all $r \geq 0$.


If $r, p, q, \ga$ and $\gb$ are integers with $r \geq p \geq q$ and $\ga,
\gb >0$, then we set
\begin{equation} \label{E:xw}
\Xi^{r,p, q}_{\ga, \gb} = \frac {(r-p +\ga)! (r-p +q +\ga -1)!
(p-q +\gb)! (p-q +\gb -1)!} {q! (r + \ga + \gb)! (r + \ga + \gb -1)!} .
\end{equation}
We now consider formal power series $F(z,X) \in \mF [[X]]_{\gd_1}$ and
$G(z,X) \in \mF [[X]]_{\gd_2}$ given by
\begin{equation} \label{E:yf}
F (z, X) = \sum^\infty_{k=0} f_k (z) X^{k+\gd_1}, \quad  G (z, X) =
\sum^\infty_{k=0} g_k (z) X^{k+\gd_2} ,
\end{equation}
and define the associated formal power series $[F(z,X), G(z,X)]^X$ by
\begin{align} \label{E:mb}
[F (z,X), G(z,X)]^X &= \sum^\infty_{r =0} \sum^r_{p =0} \sum^p_{q =0}
\biggl( \Xi^{r,p,q}_{\gd_1 +\xi_1, \gd_2 +\xi_2} f_{r-p} (z)
  g^{(q)}_{p-q} (z)\\
&\hspace{1.5in} - \Xi^{r,p,q}_{\gd_2 +\xi_2, \gd_1 +\xi_1} g_{r-p} (z)
f^{(q)}_{p-q} (z) \biggr) X^{r + \gd_1 + \gd_2} .\notag
\end{align}

\begin{lem} \label{L:pa}
The formula \eqref{E:mb} determines a bilinear map
\[ [\cdot ,\cdot ]^X : \mF [[X]]_{\gd_1} \times \mF [[X]]_{\gd_2} \to
\mF [[X]]_{\gd_1 + \gd_2} \]
of formal power series satisfying
\begin{equation} \label{E:fn}
\mI_{\xi_1 + \xi_2}^\pa \bigl( [F (z,X), G (z,X)]^X \bigr) =
[\mI_{\xi_1}^\pa (F (z,X)), \mI_{\xi_2}^\pa (G (z,X))]^\pa
\end{equation}
for $F(z,X) \in \mF [[X]]_{\gd_1}$, $G(z,X) \in \mF [[X]]_{\gd_2}$ and
$\xi_1, \xi_2 \geq 0$.
\end{lem}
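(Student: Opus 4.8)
The statement that \eqref{E:mb} defines a bilinear map into $\mF[[X]]_{\gd_1+\gd_2}$ is immediate and I would dispose of it in one line: the right-hand side of \eqref{E:mb} is visibly bilinear in the coefficient sequences $(f_k)$ and $(g_k)$ of the series \eqref{E:yf}, and its lowest-order term is $X^{\gd_1+\gd_2}$, attained at $r=0$. The real content is the intertwining relation \eqref{E:fn}, which I would prove by transporting both sides through the linear isomorphism $\mI^\pa$ and comparing the coefficient of each power $\pa^{-r-\gd_1-\gd_2-\xi_1-\xi_2}$.

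To carry this out, write $\ga=\gd_1+\xi_1$ and $\gb=\gd_2+\xi_2$, and set $\Psi=\mI_{\xi_1}^\pa F$ and $\Phi=\mI_{\xi_2}^\pa G$. By \eqref{E:nh} these lie in $\sdf_{-\ga}$ and $\sdf_{-\gb}$, with coefficients $\psi_k=C_{k+\ga}f_k$ and $\phi_k=C_{k+\gb}g_k$; since each $C_\eta$ is a constant, differentiation passes through it, so $\phi_{p-q}^{(q)}=C_{p-q+\gb}\,g_{p-q}^{(q)}$ and similarly for $\psi$. Applying \eqref{E:re} together with the explicit formula \eqref{E:ws} (with $\gd_1,\gd_2$ there replaced by $\ga,\gb$), the coefficient of $\pa^{-r-\ga-\gb}$ in $[\Psi,\Phi]^\pa$ is $\eta(\Psi,\Phi)_r$. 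On the other side, applying $\mI_{\xi_1+\xi_2}^\pa$ to \eqref{E:mb} and reading off coefficients via \eqref{E:nh}, the coefficient of the same power is $C_{r+\ga+\gb}$ times the $r$-th coefficient of $[F,G]^X$. Matching the two thus reduces, term by term in the indices $p$ and $q$, to the scalar identity
\[ C_{r+\ga+\gb}\,\Xi^{r,p,q}_{\ga,\gb}=\bi{-r-\ga+p}{q}\,C_{r-p+\ga}\,C_{p-q+\gb} \]
together with its image under the exchange $\ga\leftrightarrow\gb$; these account for the $f\,g^{(q)}$ and $g\,f^{(q)}$ contributions respectively.

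To verify this identity I would expand both sides using $C_\eta=(-1)^\eta\eta!(\eta-1)!$ from \eqref{E:cf}, the definition \eqref{E:xw} of $\Xi^{r,p,q}_{\ga,\gb}$, and the negative-upper-index rule $\bi{-n}{q}=(-1)^q\bi{n+q-1}{q}$ with $n=r+\ga-p$. The signs collapse to $(-1)^{r+\ga+\gb}$ on each side; the factors $(r+\ga+\gb)!(r+\ga+\gb-1)!$ cancel against the denominator of $\Xi^{r,p,q}_{\ga,\gb}$; the equal factorials $(r-p+\ga-1)!$ and $(r+\ga-p-1)!$ cancel; and both sides collapse to $(r-p+\ga)!\,(r-p+q+\ga-1)!\,(p-q+\gb)!\,(p-q+\gb-1)!/q!$, so they agree identically.

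The conceptual work here is minimal: the map $\mI^\pa$ is already known to be an isomorphism, and \eqref{E:ws} already packages the bracket. The only real obstacle is the indexing bookkeeping — keeping the change of summation variables that produced \eqref{E:ws} aligned with the triple sum in \eqref{E:mb}, and handling the binomial coefficient with negative upper argument correctly. Once the factorial identity above is in place, both bilinearity and \eqref{E:fn} follow at once.
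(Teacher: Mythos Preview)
Your proof is correct and follows essentially the same route as the paper: both compute $\mI^\pa_{\xi_i}$ of the factors, invoke the explicit bracket formula \eqref{E:ws}, and then compare coefficients term by term. The only cosmetic difference is that the paper applies $\mI^X_{\xi_1+\xi_2}$ to the pseudodifferential bracket and matches it against \eqref{E:mb}, whereas you apply $\mI^\pa_{\xi_1+\xi_2}$ to \eqref{E:mb} and match it against \eqref{E:ws}; since the two maps are inverse isomorphisms this is the same computation, and you make the underlying factorial identity more explicit than the paper does.
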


\begin{proof}
Let $F(z,X) \in \mF [[X]]_{\gd_1}$ and $G(z,X) \in \mF [[X]]_{\gd_2}$ be
as in \eqref{E:yf}.  Then by \eqref{E:nh} the pseudodifferential operators
$\mI_{\xi_1}^\pa (F (z,X))$ and $\mI_{\xi_2}^\pa (G (z,X))$ are given by
\begin{align*}
\mI_{\xi_1}^\pa (F (z,X)) &= \sum^\infty_{k=0} C_{k+\gd_1 +\xi_1} f_k (z)
\pa^{-k -\gd_1 -\xi_1},\\
\mI_{\xi_2}^\pa (G (z,X)) &= \sum^\infty_{k=0} C_{k+\gd_2 +\xi_2} g_k (z)
\pa^{-k -\gd_2 -\xi_2}
\end{align*}
for $\xi_1, \xi_2 \geq 0$.  From these relations and \eqref{E:ws} we
obtain
\begin{align*}
[\mI_{\xi_1}^\pa (F &(z,X)), \mI_{\xi_2}^\pa (G (z,X))]^\pa\\
&= \sum^\infty_{r =0} \sum^r_{p =0} \sum^p_{q =0} \biggl( \bi {-r -\gd_1
-\xi_1 +p} {q} C_{r -p+\gd_1 +\xi_1} C_{p-q +\gd_2 +\xi_2} f_{r-p} (z)
  g^{(q)}_{p-q} (z)\\
&\hspace{.5in} - \bi {-r -\gd_2 -\xi_2 +p} {q} C_{r -p+\gd_2 +\xi_2}
C_{p-q +\gd_1 +\xi_1}  g_{r-p} (z) f^{(q)}_{p-q} (z) \biggr) \pa^{-r -
2\gd - 2\xi} .
\end{align*}
Using this and \eqref{E:nm}, we have
\begin{align*}
\mI_\xi^X& [\mI_\xi^\pa (F (z,X)), \mI_\xi^\pa (G (z,X))]^\pa\\
&= \sum^\infty_{r =0} \sum^r_{p =0} \sum^p_{q =0} \biggl( \bi {-r -\gd_1
-\xi_1 +p} {q} C_{r -p+\gd_1 +\xi_1} C_{p-q +\gd_2 +\xi_2} f_{r-p} (z)
  g^{(q)}_{p-q} (z)\\
&\hspace{.2in} - \bi {-r -\gd_2 -\xi_2 +p} {q} C_{r -p+\gd_2 +\xi_2}
C_{p-q +\gd_1 +\xi_1}  g_{r-p} (z) f^{(q)}_{p-q} (z) \biggr) \frac {X^{r +
\gd_1 + \gd_2}} {C_{r + \gd_1 +\gd_2 +\xi_1 + \xi_2}} ,
\end{align*}
which can be easily seen to coincide with the right hand side of
\eqref{E:mb}.
\end{proof}

\begin{cor}
If $[ \cdot, \cdot ]^\pa$ is as in \eqref{E:re}, then we have
\begin{equation} \label{E:fz}
\mI_{\xi_1 + \xi_2}^X \bigl( [\Psi(z), \Phi (z)]^\pa \bigr) =
[\mI_{\xi_1}^X (\Psi (z)), \mI_{\xi_2}^X (\Phi (z))]^X
\end{equation}
for $\Psi (z) \in \sdf_{-\ve_1}$, $\Phi \in \sdf_{-\ve_2}$ and $\xi_1,
\xi_2 \geq 0$.
\end{cor}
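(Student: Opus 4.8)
The plan is to deduce \eqref{E:fz} directly from Lemma \ref{L:pa} by exploiting the fact that the maps $\mI^\pa_\xi$ and $\mI^X_\xi$ are mutually inverse isomorphisms, as recorded in \eqref{E:th}. Since the claimed identity is precisely the image of \eqref{E:fn} under the isomorphism $\mI^X$, there should be no need to recompute either bracket from its defining series; the whole argument is a formal transport of identities through the inverse correspondence.

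First I would set $F(z,X) = \mI^X_{\xi_1}(\Psi(z))$ and $G(z,X) = \mI^X_{\xi_2}(\Phi(z))$. By the mapping properties of $\mI^X$ recorded just after \eqref{E:th}, we have $F(z,X) \in \mF[[X]]_{\ve_1 -\xi_1}$ and $G(z,X) \in \mF[[X]]_{\ve_2 -\xi_2}$, so I would put $\gd_1 = \ve_1 -\xi_1$ and $\gd_2 = \ve_2 -\xi_2$. The inverse relations in \eqref{E:th} then give $\mI^\pa_{\xi_1}(F) = \Psi$ and $\mI^\pa_{\xi_2}(G) = \Phi$.

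Next I would apply Lemma \ref{L:pa} with these choices of $F$, $G$, $\gd_1$ and $\gd_2$. This yields
\[
\mI^\pa_{\xi_1 +\xi_2}\bigl([F(z,X), G(z,X)]^X\bigr) = [\mI^\pa_{\xi_1}(F), \mI^\pa_{\xi_2}(G)]^\pa = [\Psi(z), \Phi(z)]^\pa .
\]
Applying $\mI^X_{\xi_1 +\xi_2}$ to both sides and invoking \eqref{E:th} once more to cancel $\mI^X_{\xi_1+\xi_2} \circ \mI^\pa_{\xi_1+\xi_2}$ on the left, I obtain
\[
[\mI^X_{\xi_1}(\Psi), \mI^X_{\xi_2}(\Phi)]^X = [F(z,X), G(z,X)]^X = \mI^X_{\xi_1 +\xi_2}\bigl([\Psi(z), \Phi(z)]^\pa\bigr) ,
\]
which is exactly \eqref{E:fz}.

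Since the argument is a purely formal inversion, there is no serious computational obstacle; the bilinearity of $[\cdot,\cdot]^X$ from Lemma \ref{L:pa} and the linearity of $\mI^X$ guarantee that the bracket on the right of \eqref{E:fz} is well defined. The only point requiring care is the bookkeeping of degrees: one must check that $F$ and $G$ genuinely lie in the source spaces $\mF[[X]]_{\gd_1}$ and $\mF[[X]]_{\gd_2}$ with $\gd_i = \ve_i -\xi_i > 0$ to which Lemma \ref{L:pa} applies, and that $[F,G]^X \in \mF[[X]]_{\gd_1+\gd_2}$ is carried by $\mI^\pa_{\xi_1+\xi_2}$ into $\sdf_{-\ve_1 -\ve_2}$, matching the degree of $[\Psi,\Phi]^\pa$. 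Both of these are immediate from the stated mapping properties of $\mI^\pa$ and $\mI^X$.
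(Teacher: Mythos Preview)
Your proof is correct and follows essentially the same route as the paper: both arguments use the inverse relations \eqref{E:th} to write $\Psi = \mI^\pa_{\xi_1}(\mI^X_{\xi_1}\Psi)$ and $\Phi = \mI^\pa_{\xi_2}(\mI^X_{\xi_2}\Phi)$, invoke \eqref{E:fn}, and then apply $\mI^X_{\xi_1+\xi_2}$. Your version merely names the intermediate power series $F$ and $G$ explicitly and adds a careful check of the degree bookkeeping, which the paper leaves implicit.
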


\begin{proof}
Given $\Psi (z) \in \sdf_{-\ve_1}$ and $\Phi \in \sdf_{-\ve_2}$, using
\eqref{E:th} and \eqref{E:fn}, we have
\begin{align*}
[\Psi (z), \Phi (z)]^\pa &= [(\mI_{\xi_1}^\pa \circ \mI_{\xi_1}^X ) (\Psi
(z)), (\mI_{\xi_2}^\pa \circ \mI_{\xi_2}^X) (\Phi (z))]^\pa\\
&= \mI_{\xi_1 +\xi_2}^\pa [\mI_{\xi_1}^X (\Psi (z)), \mI_{\xi_2}^X (\Phi
(z))]^X.
\end{align*}
Thus we obtain \eqref{E:fz} by applying $\mI_{\xi_1 +\xi_2}^X$ to this
relation and using \eqref{E:th} again.
\end{proof}

For each $i \in \{ 1,2 \}$ we consider a quasimodular polynomial given by
\begin{equation} \label{E:wk}
F_i (z,X) = \sum^{m}_{r=0} f_{i, r} (z) X^r \in QP^m_{2\xi_i} (\G) ,
\end{equation}
and set
\begin{align} \label{E:gd}
[F_1 (z,X), F_2 (z,X)]^Q &= \sum^m_{r =0}
\sum^{m-r}_{p =0} \sum^p_{q =0} \frac {(r+p)! (m-p+q)!} {r!}\\
&\hspace{.3in} \times \biggl( \Xi^{m-r,p,q}_{\gd_1 +\xi_1, \gd_2 +\xi_2}
f_{1, r+p} (z) f^{(q)}_{2, m-p+q} (z) \notag\\
&\hspace{.9in} - \Xi^{m-r,p,q}_{\gd_2 +\xi_2, \gd_1 +\xi_1} f_{2, r+p} (z)
f^{(q)}_{1, m-p+q} (z) \biggr) X^{r} ,\notag
\end{align}
where the coefficients $\Xi^{*,*,*}_{*,*}$ are as in \eqref{E:xw}.
Let $\wh{\mF}_m$ be the graded complex vector space
\[ \wh{\mF}_m = \bigoplus_{\ell \geq 0} QP^m_{\ell} (\G) \]
for $m \geq 0$.  The next theorem shows that $\wh{\mF}_m$  has the
structure of a complex Lie algebra.

\begin{thm} \label{T:jr}
The bilinear map $[\cdot, \cdot]^Q$ given by \eqref{E:gd} is a Lie bracket
on the space $\wh{\mF}_m$ compatible with the Lie bracket $[\cdot,
\cdot]^X$ on Jacobi-like forms given by \eqref{E:mb}, meaning that the
diagram
\[\begin{CD}
\mJ_{2 (\xi_1 -m -\gd_1)} (\G)_{\gd_1}\times \mJ_{2 (\xi_2 -m -\gd_2)}
(\G)_{\gd_2} @> [\cdot, \cdot]^X >> \mJ_{2 (\xi_1 +\xi_2 -2m -\gd_1
-\gd_2)} (\G)_{\gd_1 +\gd_2}\\
@V (\Pi^{\gd_1}_m, \Pi^{\gd_1}_m) VV @VV \Pi^{\gd_1 +\gd_2}_m V\\
QP^m_{2\xi_1} (\G) \times QP^m_{2\xi_2} (\G)  @> [\cdot, \cdot]^Q
>> QP^m_{2\xi_1 +2\xi_2 -2m} (\G)
\end{CD}\]
is commutative.
\end{thm}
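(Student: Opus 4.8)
The plan is to reduce the entire statement to the already-established compatibility between the Lie bracket on pseudodifferential operators and the bracket on formal power series (Lemma~\ref{L:pa}), transported through the isomorphisms $\mI^\pa_\xi$ and $\mI^X_\xi$ of Section~\ref{S:jj} and the projection $\Pi^\gd_m$ of Section~\ref{S:pp}. There are really two assertions bundled together: first, that $[\cdot,\cdot]^Q$ is a Lie bracket on $\wh{\mF}_m$; and second, that the displayed square commutes. I would treat the commutativity as the primary goal, since the Lie algebra axioms for $[\cdot,\cdot]^Q$ will follow for free once the diagram is in place.

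For the commutativity, I would start from Jacobi-like forms $\Phi_i(z,X)\in\mJ_{2(\xi_i-m-\gd_i)}(\G)_{\gd_i}$ and compute both ways around the square. Going down-then-across means applying $\Pi^{\gd_i}_m$ first and then $[\cdot,\cdot]^Q$; going across-then-down means forming $[\Phi_1,\Phi_2]^X$ via \eqref{E:mb} and then projecting by $\Pi^{\gd_1+\gd_2}_m$. The key observation I would exploit is that formula \eqref{E:mc} for $\Pi^\gd_m$ simply reads off, with weights $1/r!$, the coefficients of $X^{k+\gd}$ in descending order; so projecting the explicit triple sum \eqref{E:mb} for $[\Phi_1,\Phi_2]^X$ amounts to extracting the coefficient of $X^{m+\gd_1+\gd_2}$ (down to degree $0$) and dividing by the appropriate factorials. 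I expect that carrying out this extraction, and matching the resulting index ranges and the $\Xi^{*,*,*}_{*,*}$ coefficients against the definition \eqref{E:gd} of $[\cdot,\cdot]^Q$ (with its prefactor $(r+p)!(m-p+q)!/r!$), reproduces exactly the down-then-across composite. This bookkeeping is where the combinatorial factors $\tbinom{\ell}{r}$ and the factorials in \eqref{E:xw} must line up, and it is the one step I would present in full detail rather than assert.

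The cleaner route for establishing that $[\cdot,\cdot]^Q$ is genuinely a Lie bracket is to avoid verifying skew-symmetry and the Jacobi identity directly on the unwieldy sum \eqref{E:gd}. Instead I would invoke the lifting $\mL^\gd_{m,\xi}\colon QP^m_\xi(\G)\to\mJ_{\xi-2m-2\gd}(\G)_\gd$ from \eqref{E:3p}, which satisfies $\Pi^\gd_m\circ\mL^\gd_{m,\xi}=\mathrm{id}$ by \eqref{E:nn}, together with the isomorphism $\mI^\pa_\xi$ transporting $[\cdot,\cdot]^X$ to the honest Lie bracket $[\cdot,\cdot]^\pa$ on $\sdf$ (Lemma~\ref{L:pa} combined with \eqref{E:th}). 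Since $[\cdot,\cdot]^\pa$ is a Lie bracket and $[\cdot,\cdot]^X$ is its image under the linear isomorphism $\mI^X_\xi$ by \eqref{E:fz}, the bracket $[\cdot,\cdot]^X$ is itself a Lie bracket on the relevant power series spaces; then the commutativity of the diagram expresses $[\cdot,\cdot]^Q$ as the image of $[\cdot,\cdot]^X$ under $\Pi^{\gd_1+\gd_2}_m$ applied to lifts, so skew-symmetry and the Jacobi identity descend along the projection. I anticipate the main obstacle is precisely the index-chasing in the second paragraph: one must confirm that the graded pieces match not merely up to scalars but with the exact factorial prefactors appearing in \eqref{E:gd}, and in particular that the degree shifts $\gd_i\mapsto\xi_i-m-\gd_i$ in the weights are consistent with the identity $\xi=2m+2\gd$-type relations governing $\Pi^\gd_m$ in Proposition~\ref{P:nk}.
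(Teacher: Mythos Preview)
Your proposal is correct and follows essentially the same route as the paper. The paper's proof lifts $F_i$ via $\mL^{\gd_i}_{m,2\xi_i}$, records the relation $\phi_{i,r}=(m-r)!\,f_{i,m-r}$ between the coefficients of the lift and those of $F_i$, then applies $\Pi^{\gd_1+\gd_2}_m$ to the explicit series \eqref{E:mb} and matches the result against \eqref{E:gd}; for general $\Psi_i$ it observes that only the first $m{+}1$ coefficients of each factor enter, so the diagram commutes without reference to the particular lift---exactly the index-chasing you anticipate. The paper leaves the Lie-algebra axioms implicit, so your explicit descent argument via \eqref{E:fz} and \eqref{E:nn} is a welcome addition rather than a deviation; just note that for the Jacobi identity you will need the commutativity of the square at the shifted parameters $(\gd_1+\gd_2,\gd_3)$ etc., which holds because the theorem is stated for arbitrary $\gd_i$.
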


\begin{proof}
Let $F_i (z,X)$ with $1 \leq i \leq 2$ be as in \eqref{E:wk}, and assume
that
\[ \mL^{\gd_i}_{m, 2\xi_i} F_i (z, X) = \Phi_i (z,X) = \sum^\infty_{k=0}
\phi_{i, k} (z) X^{k + \gd_i} \in \mJ_{\gl_i -2m -2 \gd_i} (\G)_{\gd_i} \]
with $\Pi^{\gd_i}_m (\mL^{\gd_i}_{m, 2\xi_i} F_i) = F_i$, where
$\mL^{\gd_i}_{m, 2\xi_i}$ is as in \eqref{E:3p}. Then we have
\begin{equation} \label{E:rz}
\phi_{i, r} = (m -r)! f_{i, m -r}
\end{equation}
for each $r \in \{ 0, 1, \ldots m \}$.  From \eqref{E:mb} we see that
\begin{align*}
[\Phi_1 (z,X), \Phi_2 (z,X)]^X &= \sum^\infty_{r =0} \sum^r_{p =0}
\sum^p_{q =0} \biggl( \Xi^{r,p,q}_{\gd_1 +\xi_1, \gd_2 +\xi_2} \phi_{1,
r-p}
(z) \phi^{(q)}_{2, p-q} (z)\\
&\hspace{1.5in} - \Xi^{r,p,q}_{\gd_2 +\xi_2, \gd_1 +\xi_1} \phi_{2, r-p}
(z) \phi^{(q)}_{1, p-q} (z) \biggr) X^{r + \gd_1 + \gd_2} ,
\end{align*}
which is a Jacobi-like form belonging to $\mJ_{2\xi_1 + 2\xi_2}
(\G)_{\gd_1 +\gd_2}$.  Thus, using \eqref{E:rz}, we obtain
\begin{align*}
\Pi^{\gd_1 + \gd_2}_m [\Phi_1 (z,X), \Phi_2 (z,X)]^X &= \sum^m_{r =0}
\sum^{m-r}_{p =0} \sum^p_{q =0} \frac {1} {r!} \biggl(
\Xi^{m-r,p,q}_{\gd_1 +\xi_1, \gd_2 +\xi_2}
\phi_{1, m-r-p} (z) \phi^{(q)}_{2, p-q} (z)\\
&\hspace{.9in} - \Xi^{m-r,p,q}_{\gd_2 +\xi_2, \gd_1 +\xi_1} \phi_{2,
m-r-p} (z) \phi^{(q)}_{1, p-q} (z) \biggr) X^{r}\\
&= [F_1 (z,X), F_2 (z,X)]^Q ,
\end{align*}
and it is a quasimodular polynomial belonging to $QP^m_{2\xi_1 + 2\xi_2
-2m} (\G)$.  In particular, we have
\[ [F_1 (z,X), F_2 (z,X)]^Q = \Pi^{\gd_1 + \gd_2}_m
[\mL^{\gd_1}_{m, 2\xi_1} F_1 (z, X), \mL^{\gd_2}_{m, 2\xi_2} F_2 (z, X)]^X
.\]
We now consider an element
\[ \Psi_i (z,X) = \sum^\infty_{k=0} \psi_{i, k} (z) X^{k + \gd_i}
\in \mJ_{\gl_i -2m -2 \gd_i} (\G)_{\gd_i} \]
for each $i \in \{ 1, 2\}$, and let
\[ G_i (z, X) = \Pi^{\gd_i}_m \Psi_i (z,X) .\]
If we set
\[ [\mL^{\gd_1}_{m, 2\xi_1} G_1 (z, X), \mL^{\gd_2}_{m, 2\xi_2} G_2 (z,
X)]^X = \sum^\infty_{k=0} \ga_k (z) X^{k + \gd_i +\gd_i} ,\]
\[ [\Psi_1 (z, X), \Psi_2 (z, X)]^X = \sum^\infty_{k=0} \gb_k (z)
X^{k + \gd_i +\gd_i} ,\]
then from \eqref{E:mb} we see that $\ga_k = \gb_k$ for each $k \in \{ 0,
1, \ldots, m \}$; hence we obtain
\[ \Pi^{\gd_1 + \gd_2}_m [\mL^{\gd_1}_{m, 2\xi_1} G_1 (z, X),
\mL^{\gd_2}_{m, 2\xi_2} G_2 (z, X)]^X = \Pi^{\gd_1 + \gd_2}_m [\Psi_1 (z,
X), \Psi_2 (z, X)]^X .\]
Thus we have
\[ [\Pi^{\gd_1}_m \Psi_1 (z,X), \Pi^{\gd_2}_m \Psi_2 (z,X)]^Q
= \Pi^{\gd_1 + \gd_2}_m [\Psi_1 (z, X), \Psi_2 (z, X)]^X ,\]
which proves the theorem.
\end{proof}

\begin{rem}
From \eqref{E:gd} we see that the coefficient of $X^m$ in $[F_1 (z,X), F_2
(z,X)]^Q$ is given by
\[ \fS_m [F_1 (z,X), F_2 (z,X)]^Q = \Xi^{0,0,0}_{\gd_1 +\xi_1, \gd_2 +\xi_2}
f_{1,m} (z) f_{2,m} (z) - \Xi^{0,0,0}_{\gd_2 +\xi_2, \gd_1 +\xi_1} f_{2,m}
(z) f_{1,m} (z) = 0 ;\]
hence it follows that
\[ [F_1 (z,X), F_2 (z,X)]^Q \in QP^{m-1}_{2\xi_1 +2\xi_2 -2m} (\G) .\]
On the other hand, the coefficient of $X^{m-1}$ is equal to
\begin{align*}
\fS_{m-1}& [F_1 (z,X), F_2 (z,X)]^Q
\\&= m! \Bigl( \Xi^{1,0,0}_{\gd_1 +\xi_1, \gd_2 +\xi_2}
f_{1,m-1} (z) f_{2,m} (z) - \Xi^{1,0,0}_{\gd_2 +\xi_2, \gd_1 +\xi_1} f_{2,m-1}
(z) f_{1,m} (z) \Bigr)\\
&\hspace{.3in} + m! \Bigl( \Xi^{1,1,0}_{\gd_1 +\xi_1, \gd_2 +\xi_2}
f_{1,m} (z) f_{2,m -1} (z) - \Xi^{1,1,0}_{\gd_2 +\xi_2, \gd_1 +\xi_1} f_{2,m}
(z) f_{1,m-1} (z) \Bigr)\\
&\hspace{.6in} + m(m!) \Bigl( \Xi^{1,1,1}_{\gd_1 +\xi_1, \gd_2 +\xi_2}
f_{1,m} (z) f'_{2,m} (z) - \Xi^{1,1,1}_{\gd_2 +\xi_2, \gd_1 +\xi_1} f_{2,m}
(z) f'_{1,m} (z) \Bigr)\\
&=  m(m!) \Bigl( \Xi^{1,1,1}_{\gd_1 +\xi_1, \gd_2 +\xi_2}
f_{1,m} (z) f'_{2,m} (z) - \Xi^{1,1,1}_{\gd_2 +\xi_2, \gd_1 +\xi_1} f_{2,m}
(z) f'_{1,m} (z) \Bigr),
\end{align*}
and it is a modular form belonging to $M_{2\xi_1 +2\xi_2 -4m +2} (\G)$ by
\eqref{E:hh}.
\end{rem}

\begin{cor}
The Lie bracket $[\cdot, \cdot]^Q$ on $\wh{\mF}_m$ given by \eqref{E:gd}
is compatible with the Lie bracket $[\cdot, \cdot]^\pa$ on $\sdf$ given by
\eqref{E:re}, so that
\[\begin{CD}
\sdf^\G_{-\xi_1 +m} \times \sdf^\G_{-\xi_2 +m} @> [\cdot, \cdot]^\pa >>
\sdf^\G_{-\xi_1 -\xi_2 +2m}\\
@V (\PPi^{m -\xi_1}_m, \PPi^{m -\xi_2}_m) VV @VV \PPi^{2m -\xi_1 -\xi_2}_m V\\
QP^m_{2\xi_1} (\G) \times QP^m_{2\xi_2} (\G)  @> [\cdot, \cdot]^Q
>> QP^m_{2\xi_1 +2\xi_2 -2m} (\G)
\end{CD}\]
is commutative, where $\PPi^*_m$ is as in \eqref{E:z8}.
\end{cor}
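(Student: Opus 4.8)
The plan is to reduce the statement to the compatibility already proved at the level of Jacobi-like forms, transporting the pseudodifferential picture through the isomorphisms $\mI^X_\xi$ and $\mI^\pa_\xi$ of Section \ref{S:jj}. Write $\ve_i = \xi_i - m$, so that $\Psi_i \in \sdf^\G_{-\ve_i}$ and, by \eqref{E:ws}, $[\Psi_1, \Psi_2]^\pa \in \sdf^\G_{-\ve_1 -\ve_2} = \sdf^\G_{2m -\xi_1 -\xi_2}$. Fix nonnegative integers $\eta_1, \eta_2$ with $0 \le \eta_i \le \ve_i$ (for instance $\eta_i = 0$, which is admissible since $\ve_i = \xi_i - m \ge 0$), and set $\gd_i = \ve_i - \eta_i \ge 0$. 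By Corollary \ref{C:3k}(ii), the operator $\mI^X_{\eta_i} \Psi_i$ is then a Jacobi-like form in $\mJ_{2\eta_i} (\G)_{\gd_i}$; note that its weight satisfies $2\eta_i = 2(\xi_i - m - \gd_i)$, which is exactly the weight appearing along the top edge of the diagram in Theorem \ref{T:jr}.

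First I would invoke the $\mI^X$-equivariance of the bracket, \eqref{E:fz}, in the form
\[
\mI^X_{\eta_1 +\eta_2} \bigl( [\Psi_1, \Psi_2]^\pa \bigr) = [\mI^X_{\eta_1} \Psi_1, \mI^X_{\eta_2} \Psi_2]^X ,
\]
and then apply $\Pi^{\gd_1 +\gd_2}_m$ to both sides. On the right-hand side, Theorem \ref{T:jr} converts $\Pi^{\gd_1 +\gd_2}_m [\cdot, \cdot]^X$ into $[\Pi^{\gd_1}_m (\cdot), \Pi^{\gd_2}_m (\cdot)]^Q$, after which Lemma \ref{L:5x} (taken with $\xi = \eta_i$, so that $\ve - \xi = \ve_i - \eta_i = \gd_i$) identifies $\Pi^{\gd_i}_m (\mI^X_{\eta_i} \Psi_i) = \PPi^{-\ve_i}_m \Psi_i = \PPi^{m -\xi_i}_m \Psi_i$. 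Hence the right-hand side becomes $[\PPi^{m -\xi_1}_m \Psi_1, \PPi^{m -\xi_2}_m \Psi_2]^Q$, the lower edge of the diagram.

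For the left-hand side I would apply Lemma \ref{L:5x} once more, now to the single operator $[\Psi_1, \Psi_2]^\pa \in \sdf^\G_{-(\ve_1 +\ve_2)}$ with $\xi = \eta_1 +\eta_2$. Since $(\ve_1 +\ve_2) - (\eta_1 +\eta_2) = \gd_1 +\gd_2$, this gives $\Pi^{\gd_1 +\gd_2}_m (\mI^X_{\eta_1 +\eta_2} ([\Psi_1, \Psi_2]^\pa)) = \PPi^{-(\ve_1 +\ve_2)}_m ([\Psi_1, \Psi_2]^\pa)$, and $-(\ve_1 +\ve_2) = 2m -\xi_1 -\xi_2$ matches the diagram's label $\PPi^{2m -\xi_1 -\xi_2}_m$. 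Combining the two computations yields the asserted identity, i.e.\ the commutativity of the diagram. The main (if modest) obstacle is purely the index bookkeeping: one must check that the depth shifts $\gd_i = \ve_i - \eta_i$ remain nonnegative, that the superscripts $-\ve_i$ and $-(\ve_1 +\ve_2)$ on $\PPi$ genuinely coincide with the diagram's labels $m -\xi_i$ and $2m -\xi_1 -\xi_2$, and that the value $\xi = \eta_1 +\eta_2$ forced in Lemma \ref{L:5x} by \eqref{E:fz} is exactly the one for which all depths align. Since Theorem \ref{T:jr}, \eqref{E:fz}, and Lemma \ref{L:5x} each hold uniformly in these parameters, no further computation with the explicit coefficients $\Xi^{*,*,*}_{*,*}$ is needed.
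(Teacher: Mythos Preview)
Your proof is correct and follows exactly the paper's approach: the paper's own proof is the single line ``This follows from \eqref{E:17}, \eqref{E:fz} and Theorem \ref{T:jr},'' and you have simply unpacked these three ingredients with careful index bookkeeping (introducing $\ve_i=\xi_i-m$, $\eta_i$, $\gd_i=\ve_i-\eta_i$) to show that all the depth and weight parameters line up.
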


\begin{proof}
This follows from \eqref{E:17}, \eqref{E:fz} and Theorem \ref{T:jr},
\end{proof}

\section{\bf{Rankin-Cohen brackets on quasimodular forms}}

Rankin-Cohen brackets for modular forms are well-known (cf.\ \cite{CM97}),
and similar brackets for Jacobi-like forms were introduced in (\cite{C1}, \cite{C2}, \cite{CL07})
by using the heat operator.  Rankin-Cohen brackets on quasimodular forms
were also studied by Martin and Royer in \cite{MR07} for congruence
subgroups of $SL(2, \bZ)$.  In this section we construct Rankin-Cohen
brackets on quasimodular forms for more general discrete subgroups of
$SL(2, \bR)$ that are compatible with Rankin-Cohen brackets on Jacobi-like
forms.

Given $\mu \in \bR$, we consider the formal differential operator $\fL_\mu$
on $\mF [[X]]$ given by
\begin{equation} \label{E:55}
\fL_\mu = \frac {\pa} {\pa z} - \mu \frac {\pa} {\pa X} - X
\frac {\pa^2} {\pa X^2} ,
\end{equation}
which may be regarded as the radial heat operator in some
sense (cf.\ \cite{L09a}).

\begin{prop}
Given $\mu \in \bR$, $\gl \in \bZ$ and a formal power series $\Phi (z,X)
\in \mF [[X]]$, we have
\begin{equation} \label{E:5t}
(\fL_\mu (\Phi)|_{\gl+2}^J \g) (z,X) = \fL_\mu (\Phi|_{\gl}^J \g)
(z,X) + (\gl-\mu  ) \fK (\g,z) (\Phi|_{\gl}^{J} \g) (z,X)
\end{equation}
for all $\g \in SL(2, \bR)$.
\end{prop}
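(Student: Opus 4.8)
The plan is to prove \eqref{E:5t} by a direct computation, expanding both sides in terms of the partial derivatives of $\Phi$ evaluated at the transformed point $(\g z, \fJ(\g,z)^{-2}X)$ and comparing. To streamline the bookkeeping I would first abbreviate $j = \fJ(\g,z)$ and $k = \fK(\g,z)$ and record the elementary identities
\[
\frac {\pa j}{\pa z} = jk, \qquad \frac {\pa k}{\pa z} = -k^2, \qquad \frac {\pa}{\pa z}(\g z) = j^{-2},
\]
the first being immediate from the definition \eqref{E:kz} of $\fK$, the second from $k = c/(cz+d)$, and the third being the standard formula valid since $\det \g = 1$. Writing $\Psi := (\Phi \mid^J_\gl \g)(z,X) = j^{-\gl} e^{-kX}\Phi(\g z, j^{-2}X)$ and denoting by $\Phi_1, \Phi_2, \Phi_{22}$ the partials of $\Phi$ with respect to its first argument, its second argument, and its second argument twice (all evaluated at $(\g z, j^{-2}X)$), the chain rule gives explicit expressions for $\Psi_z$, $\Psi_X$ and $\Psi_{XX}$. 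The only subtlety is that $X$ enters $\Psi$ both explicitly through $e^{-kX}$ and implicitly through the factor $j^{-2}X$ in the second slot of $\Phi$.

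Next I would assemble $\fL_\mu \Psi = \Psi_z - \mu\Psi_X - X\Psi_{XX}$ using \eqref{E:55}. The crux is a pair of cancellations that make the statement work: the terms proportional to $k^2 X\,\Phi$ arising from $\Psi_z$ and from $-X\Psi_{XX}$ cancel, and likewise the terms proportional to $kj^{-2}X\,\Phi_2$ cancel. After these cancellations one is left with
\[
\fL_\mu \Psi = j^{-\gl}e^{-kX}\Big[(\mu-\gl)k\,\Phi + j^{-2}\Phi_1 - \mu j^{-2}\Phi_2 - j^{-4}X\,\Phi_{22}\Big].
\]
Separately, a short computation gives $(\fL_\mu\Phi)(\g z, j^{-2}X) = \Phi_1 - \mu\Phi_2 - j^{-2}X\,\Phi_{22}$, so that applying the weight-$(\gl+2)$ action \eqref{E:yt} and pulling out a factor $j^{-\gl}$ yields
\[
((\fL_\mu\Phi)\mid^J_{\gl+2}\g)(z,X) = j^{-\gl}e^{-kX}\big[j^{-2}\Phi_1 - \mu j^{-2}\Phi_2 - j^{-4}X\,\Phi_{22}\big],
\]
which is exactly the bracketed expression above except for the missing term $(\mu-\gl)k\,\Phi$.

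Comparing the two displays, their difference is precisely $j^{-\gl}e^{-kX}(\mu-\gl)k\,\Phi = (\mu-\gl)\,\fK(\g,z)\,\Psi$, and recalling $\Psi = \Phi\mid^J_\gl\g$ this rearranges to \eqref{E:5t}. I expect the main obstacle to be purely computational: correctly tracking the two roles of $X$ and verifying that the several $X$-dependent terms cancel as claimed, since an error in any single chain-rule factor would spoil the clean correction term $(\gl-\mu)\fK(\g,z)(\Phi\mid^J_\gl\g)$. No deeper input is needed beyond the three derivative identities above and the definition \eqref{E:yt} of $\mid^J_\gl$.
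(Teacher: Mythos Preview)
Your proposal is correct and follows essentially the same approach as the paper's own proof: both compute the partial derivatives of $\Psi = \Phi\mid^J_\gl\g$ directly via the chain rule, assemble $\fL_\mu\Psi$, and compare with $(\fL_\mu\Phi)\mid^J_{\gl+2}\g$ to isolate the correction term. Your use of the abbreviations $j,k$ and the identity $\pa k/\pa z = -k^2$ streamlines the bookkeeping slightly relative to the paper (which keeps the bare $c$ around and uses $c = jk$), but the substance of the argument is identical.
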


\begin{proof}
Let $\g$ be an element of $SL(2,\mbb R)$ whose $(2,1)$-entry is $c$, so
that
\[ \frac {\pa} {\pa z} \fJ (\g,z) = c = \fJ (\g,z) \fK (\g,z) ,\]
where $\fJ (\g,z)$ and $\fK (\g,z)$ are as in \eqref{E:kz}.  Given a
formal power series $\Phi (z,X) \in \mF [[X]]$, using \eqref{E:yt}, we see
that
\begin{align*}
\frac {\pa} {\pa z} (\Phi \mid^J_\gl \g) (z,X)
&= -\gl c \fJ (\g,z)^{-\gl-1}
e^{-\fK (\g,z) X} \Phi (\g z, \fJ (\g,z)^{-2} X)\\
&\hspace{.3in} + \fJ (\g,z)^{-\gl} \fK (\g,z)^2 X e^{-\fK (\g,z) X} \Phi
(\g z, \fJ (\g,z)^{-2} X)\\
&\hspace{.5in} + \fJ (\g,z)^{-\gl} e^{-\fK (\g,z) X} \fJ (\g,z)^{-2} \frac
{\pa \Phi} {\pa z} (\g z, \fJ (\g,z)^{-2} X)\\
&\hspace{.7in} + \fJ (\g,z)^{-\gl} e^{-\fK (\g,z) X} (-2c) \fJ (\g,z)^{-3}
X \frac {\pa \Phi} {\pa X} (\g z, \fJ (\g,z)^{-2} X),
\end{align*}
\begin{align*}
\frac {\pa} {\pa X} (\Phi \mid^J_\gl \g) (z,X)
&= -\fJ (\g,z)^{-\gl} \fK
(\g,z) e^{-\fK (\g,z) X} \Phi (\g z, \fJ (\g,z)^{-2} X)\\
&\hspace{.6in} + \fJ (\g,z)^{-\gl} e^{-\fK (\g,z) X}  \fJ (\g,z)^{-2}\frac
{\pa \Phi} {\pa X} (\g z, \fJ (\g,z)^{-2} X)\\
& = \fJ (\g,z)^{-\gl-2} e^{-\fK (\g,z) X}\\
&\hspace{.3in} \times \biggl( -c \fJ (\g,z) \Phi (\g z, \fJ (\g,z)^{-2} X)
+ \frac {\pa \Phi} {\pa X} (\g z, \fJ (\g,z)^{-2} X) \biggr),
\end{align*}
\begin{align*}
\frac {\pa^2} {\pa X^2} (\Phi \mid^J_\gl \g) (z,X)
&= \fJ (\g,z)^{-\gl-2}
  e^{-\fK (\g,z) X} \biggl( c^2 \Phi (\g z, \fJ (\g,z)^{-2} X)\\
& - 2\fK (\g,z) \frac {\pa \Phi} {\pa X} (\g z, \fJ (\g,z)^{-2} X) +  \fJ
  (\g,z)^{-2} \frac {\pa^2 \Phi} {\pa X^2} (\g z, \fJ (\g,z)^{-2} X)
  \biggr) .
\end{align*}
From these relations and \eqref{E:55}, we obtain
\begin{align*}
\fL_\mu (\Phi \mid^J_\gl \g) (z,X)
&= \frac {\pa} {\pa z} (\Phi \mid^J_\gl \g) (z,X) - \mu \frac {\pa} {\pa
X} (\Phi \mid^J_\gl \g) (z,X) - X \frac {\pa^2} {\pa X^2} (\Phi \mid^J_\gl
\g) (z,X)\\
& = \fJ (\g,z)^{-\gl-2} e^{-\fK (\g,z) X}\\
&\hspace{.2in} \times \biggl( - \gl c \fJ (\g,z) \Phi + \fJ (\g,z)^2 \fK
(\g,z)^2 X \Phi + \frac {\pa \Phi} {\pa z}- 2c \fJ (\g,z)^{-1} X \frac {\pa
  \Phi} {\pa X}\\
&\hspace{1.0in} +\mu c \fJ (\g,z) \Phi -\mu \frac {\pa \Phi} {\pa X} -c^2 X
\Phi\\
&\hspace{.8in} + 2 \fK
(\g,z) X \frac {\pa \Phi} {\pa X} - \fJ (\g,z)^{-2} X \frac {\pa^2 \Phi}
{\pa X^2} \biggr) (\g z, \fJ (\g,z)^{-2} X)\\
&= \fJ (\g,z)^{-\gl-2} e^{-\fK (\g,z) X} \biggl( \frac {\pa \Phi}
{\pa z} -\mu \frac {\pa \Phi} {\pa X} - \fJ (\g,z)^{-2} X \frac {\pa^2 \Phi}
{\pa X^2} \biggr) (\g z, \fJ (\g,z)^{-2} X)\\
&\hspace{1.0in} + (\mu -\gl) c \fJ (\g,z) \Phi (\g z, \fJ (\g,z)^{-2} X),
\end{align*}
where we used the relation $c = \fJ (\g,z) \fK (\g,z)$.  Thus it follows
that
\begin{align*}
(\fL_\mu (\Phi \mid^J_\gl \g)) (z,X) &= \fJ (\g,z)^{-\gl-2} e^{-\fK
(\g,z) X} (\fL_\mu \Phi) (\g z, \fJ (\g,z)^{-2} X)\\
&\hspace{1.0in} + (\mu -\gl) c \fJ (\g,z)^{-\gl-1} e^{-\fK
(\g,z) X} \Phi (\g z, \fJ (\g,z)^{-2} X)\\
&= (\fL_\mu (\Phi) \mid^J_{\gl+2} \g) (z,X) + (\mu -\gl) \fK (\g,z) (\Phi
\mid^J_{\gl} \g) (z,X),
\end{align*}
which verifies \eqref{E:5t}.
\end{proof}

Let $\G$ be a discrete subgroup of $SL(2, \bR)$.  If $\mu = \gl$, then
\eqref{E:5t} can be written in the form
\[ (\fL_{\gl}(\Phi)|_{\gl+2}^J \g) (z,X) = \fL_{\gl} (\Phi|_{\gl}^J
\g) (z,X) .\]
Thus we see that
\[ \fL_\gl (\mJ_\gl (\G)) \subset \mJ_{\gl +2} (\G) .\]
If $\mu \neq \gl$, however, the operator $\fL_\mu$ does not carry
Jacobi-like forms to Jacobi-like forms.

If $\Phi (z,X) = \sum^\infty_{k=0} \phi_k (z) X^{k +\gd} \in \mF
[[X]]_\gd$ with $\gd \geq 0$, then it can be shown that
\[ (\fL^\ell_\mu \Phi) (z,X) = \sum^\infty_{k=0} [\Phi]^{\ell, k}_{\mu,
  \gd} (z) X^{k +\gd} ,\]
  (Note that $\fL^\ell_\mu $ denotes the composition of
  $\ell$-copies of $\fL_{\mu}.$)
where
\begin{align} \label{E:7f}
[\Phi]^{\ell, k}_{\mu, \gd} (z) =  \sum^{\ell  }_{j=1} &
(-1)^{\ell -j} \bi {\ell} {j}\\
&\times\frac{(k+\delta+\ell-j)!(k+\delta+\mu+\ell-j-1)!}
{(k+\delta)!(k+\delta+\mu-1)!}
 \phi^{(j)}_{k+\ell-j} (z) \notag
\end{align}
for all $z \in \mH$ and $k \geq 0$.

We now introduce bilinear maps on the space $\mF [[X]]$ of formal power
series by using the operators of the form $\fL_\mu$ with $\mu \in \frac12
\bZ$.  Given a nonnegative integer $n$, elements $\mu_1, \mu_2 \in \frac12
\bZ$ and formal power series
\[ \Phi_1 (z,X) \in \mJ_{\gl_1}(\G)_{\gd_1}, \quad \Phi_2 (z,X) \in
 \mJ_{\gl_2}(\G)_{\gd_2} ,\]
we define the associated formal power series $[\Phi_1,
\Phi_2]^J_{\mu_1,
  \mu_2, n} (z,X) \in \mF [[X]]_{\gd_1 + \gd_2}$ by
\begin{equation} \label{E:8i}
[\Phi_1, \Phi_2]^J_{\mu_1, \mu_2, n} (z,X) = \sum^\infty_{u=0} \xi_{\mu_1,
  \mu_2, n, u} (z) X^{u + \gd_1 + \gd_2}
\end{equation}
with
\begin{align} \label{E:82}
\xi_{\mu_1, \mu_2, n, u} (z) &= \sum^{u }_{t=0} \sum^n_{\ell =0} (-1)^\ell \bi {n +
\gl_1 -\mu_1 -1} {n-\ell} \bi {n + \gl_2 -\mu_2 -1} {\ell}\\
&\hspace{1.9in} \times [\Phi_1]^{\ell, t}_{\mu_1, \gd_1} (z) [\Phi_2]^{n
  -\ell, u-t}_{\mu_2, \gd_2} (z) \notag
\end{align}
for all $z \in \mH$ and $u \geq 0$, where $[\Phi_1]^{\ell,
t}_{\mu_1, \gd_1}$ and $[\Phi_2]^{n -\ell, u -t}_{\mu_2, \gd_2}$
are as in \eqref{E:7f}.  Here we note that the binomial coefficients of the
form $\bi r\ell$ with $r \in \frac 12 \bZ$ is given by
\[ \bi r\ell = \frac {r!} {(r-\ell)! \ell!} = \frac {\BG (r+1)} {\BG
(r-\ell+1) \BG(\ell+1) } ,\] where $\BG$ is the Gamma function.

\begin{prop} \label{P:jw}
If $\Phi_1 (z,X)$ and $\Phi_2 (z,X)$ are Jacobi-like forms with
\[ \Phi_1 (z,X) \in \mJ_{\gl_1} (\G)_{\gd_1}, \quad \Phi_2 (z,X) \in
\mJ_{\gl_2} (\G)_{\gd_2} ,\]
then $[\Phi_1, \Phi_2]^J_{\mu_1, \mu_2, n} (z,X)$ is a Jacobi-like form
belonging to $\mJ_{\gl_1 +\gl_2 +2n} (\G)_{\gd_1 +\gd_2}$.
\end{prop}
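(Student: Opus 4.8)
The plan is to recast $[\Phi_1,\Phi_2]^J_{\mu_1,\mu_2,n}$ as an explicit combination of ordinary products of iterated heat operators and then verify the invariance $\bigl([\Phi_1,\Phi_2]^J_{\mu_1,\mu_2,n}\bigr)\mid^J_{\gl_1+\gl_2+2n}\g=[\Phi_1,\Phi_2]^J_{\mu_1,\mu_2,n}$ for all $\g\in\G$ directly from \eqref{E:5t}. First I would note, by comparing the double sum in \eqref{E:82} with the Cauchy product of two power series and invoking the coefficient formula \eqref{E:7f} for $\fL_\mu^\ell$, that
\[ [\Phi_1,\Phi_2]^J_{\mu_1,\mu_2,n}=\sum_{\ell=0}^{n}(-1)^\ell\bi{n+\gl_1-\mu_1-1}{n-\ell}\bi{n+\gl_2-\mu_2-1}{\ell}\,(\fL_{\mu_1}^\ell\Phi_1)\,(\fL_{\mu_2}^{\,n-\ell}\Phi_2), \]
the products on the right being the usual products in $\mF[[X]]$. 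Since $\fL_{\mu_i}^\ell\Phi_i$ has order $\gd_i$ and effective weight $\gl_i+2\ell$, each summand lies in $\mF[[X]]_{\gd_1+\gd_2}$ and has weight $\gl_1+\gl_2+2n$, which accounts for the target space; what remains is invariance, and for this I need to understand how the two factors transform under $\mid^J$.

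The two transformation ingredients are the behaviour of $\fL_\mu^\ell\Phi$ under the slash when $\Phi$ is Jacobi-like, and the behaviour of a product under the slash. For the first, since $\fK(\g,z)=c/(cz+d)$ satisfies $\pa_z\fK(\g,z)=-\fK(\g,z)^2$, one has the operator identity $\fL_\mu(\fK^p\Psi)=\fK^p\fL_\mu\Psi-p\,\fK^{p+1}\Psi$ for every $p\ge0$. Feeding this into \eqref{E:5t} and inducting on $\ell$ gives, for $\Phi\in\mJ_\gl(\G)$,
\[ (\fL_\mu^\ell\Phi)\mid^J_{\gl+2\ell}\g=\sum_{j=0}^\ell c_{\ell,j}\,\fK(\g,z)^{\ell-j}\,(\fL_\mu^j\Phi),\qquad c_{\ell,\ell}=1,\quad c_{\ell,j}=c_{\ell-1,j-1}+(\gl+\ell+j-1-\mu)\,c_{\ell-1,j}. \]
For the second, a direct substitution into \eqref{E:yt} of a product yields $(\Phi\Psi)\mid^J_{a+b}\g=e^{\fK(\g,z)X}\,(\Phi\mid^J_a\g)(\Psi\mid^J_b\g)$, the extra exponential being precisely the failure of the ordinary product of two Jacobi-like forms to be Jacobi-like.

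Applying the product law with $a=\gl_1+2\ell$ and $b=\gl_2+2(n-\ell)$, and then the iterated law to each factor, I would reach
\[ \bigl([\Phi_1,\Phi_2]^J_{\mu_1,\mu_2,n}\bigr)\mid^J_{\gl_1+\gl_2+2n}\g=e^{\fK X}\sum_{\ell=0}^n a_\ell\Bigl(\sum_{j_1}c^{(1)}_{\ell,j_1}\fK^{\ell-j_1}\fL_{\mu_1}^{j_1}\Phi_1\Bigr)\Bigl(\sum_{j_2}c^{(2)}_{n-\ell,j_2}\fK^{n-\ell-j_2}\fL_{\mu_2}^{j_2}\Phi_2\Bigr), \]
where $a_\ell$ is the signed product of binomials above and $c^{(i)}$ are the coefficients for $(\gl_i,\mu_i)$. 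The entire content of the proposition is then the claim that, after expanding $e^{\fK X}=\sum_p(\fK X)^p/p!$ and collecting by the total power of $\fK$ and by the pair $(\fL_{\mu_1}^{j_1}\Phi_1,\fL_{\mu_2}^{j_2}\Phi_2)$, every contribution carrying a positive power of $\fK$ cancels, leaving exactly $\sum_\ell a_\ell(\fL_{\mu_1}^\ell\Phi_1)(\fL_{\mu_2}^{n-\ell}\Phi_2)$. I expect this cancellation to be the main obstacle: it is a Chu--Vandermonde/hypergeometric identity binding the $a_\ell$, the recursive $c^{(i)}_{\ell,j}$, and the coefficients of $e^{\fK X}$, and it is exactly what forces the indices $n+\gl_i-\mu_i-1$ in the binomials. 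Concretely, the proof reduces to showing that the coefficient of each monomial $\fK^{q}\,\fL_{\mu_1}^{a}\Phi_1\,\fL_{\mu_2}^{b}\Phi_2$ with $q>0$ vanishes. As a cross-check on these identities I would use the special case $\mu=\gl$, for which \eqref{E:5t} already shows $\fL_\gl(\mJ_\gl(\G))\subset\mJ_{\gl+2}(\G)$; an alternative that sidesteps much of the bookkeeping is to package the iterated heat operators into a Cohen--Kuznetsov-type generating series in an auxiliary variable whose slash transformation is multiplicative, so that the bracket is read off as a single coefficient and invariance becomes transparent.
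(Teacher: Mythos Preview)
Your rewriting of the bracket as $\sum_{\ell}a_\ell\,(\fL_{\mu_1}^\ell\Phi_1)(\fL_{\mu_2}^{n-\ell}\Phi_2)$ is correct, as are your product law $(\Phi\Psi)\mid^J_{a+b}\g=e^{\fK X}(\Phi\mid^J_a\g)(\Psi\mid^J_b\g)$ and the iterated transformation formula for $\fL_\mu^\ell$ with the recursion for $c_{\ell,j}$. The paper itself gives no argument here---it simply cites \cite{CL07} for $\mu_1=\mu_2=\tfrac12$ and asserts the general case is analogous---so in that sense your outline already goes further than the text.

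There is, however, a genuine gap in the reduction you propose. The claim that it suffices to show ``the coefficient of each monomial $\fK^{q}\,\fL_{\mu_1}^{a}\Phi_1\,\fL_{\mu_2}^{b}\Phi_2$ with $q>0$ vanishes'' cannot be right as stated, because the factor $e^{\fK X}$ couples $\fK$ to the variable $X$ while the $\fL_{\mu_i}^{j}\Phi_i$ are themselves power series in $X$; one is not entitled to treat the latter as scalars independent of the $X$ appearing in the exponential. Concretely, for $n=1$ the $\fK$-polynomial pieces coming from your $c_{\ell,j}$ do cancel (one gets $(\gl_1-\mu_1)(\gl_2-\mu_2)-(\gl_2-\mu_2)(\gl_1-\mu_1)=0$), but what remains is $e^{\fK X}\,[\Phi_1,\Phi_2]^J_1$, not $[\Phi_1,\Phi_2]^J_1$. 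The same obstruction is already visible at $n=0$, where the bracket reduces to the bare product and your own product law gives $(\Phi_1\Phi_2)\mid^J_{\gl_1+\gl_2}\g=e^{\fK X}\Phi_1\Phi_2$. So either the bracket \eqref{E:8i}--\eqref{E:82} is not literally the naive product of heat iterates you have written (in which case your first displayed identity needs to be rechecked against the construction in \cite{CL07}), or an additional mechanism beyond the Chu--Vandermonde cancellation of the $c_{\ell,j}$ is required to absorb the surviving exponential. Your Cohen--Kuznetsov generating-series alternative is the more promising route, since it builds the exponential factor into the lift from the outset and makes the multiplicativity transparent; I would pursue that line and compare carefully with the actual definition used in \cite{CL07}.
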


\begin{proof}
This was proved in \cite{CL07} in the case where $\mu_1= \mu_2 =
1/2$. The general case can be proved in a similar manner.
\end{proof}

From Proposition \ref{P:jw} we obtain the bilinear map
\begin{equation} \label{E:83}
[\, ,\,]^J_{\mu_1, \mu_2, n}: \mJ_{\gl_1} (\G)_{\gd_1} \times
\mJ_{\gl_2} (\G)_{\gd_2} \to \mJ_{\gl_1 +\gl_2+2n} (\G)_{\gd_1
+\gd_2} ,
\end{equation}
which may be regarded as the $n$-th Rankin-Cohen bracket for Jacobi-like
forms (see \cite{CL07}).

Given integers $\gl_1$, $\gl_2$, nonnegative integers $\gd_1, \gd_2, m_1,
m_2, n$, and quasimodular polynomials
\begin{equation} \label{E:oo}
F_1 (z,X) \in QP^{m_1}_{\gl_1+2(m_1+   \gd_1)} (\G) \subset \mF_{m_1}
[X] ,\quad F_2 (z,X) \in QP^{m_2}_{ \gl_2+2(m_2+ \gd_2)} (\G) \subset
\mF_{m_2} [X] ,
\end{equation}
using the lifting map in \eqref{E:3p}, we obtain the Jacobi-like forms
\begin{equation} \label{E:x6}
\mL^{\gd_1}_{m_1, \gl_1 +2(m_1 +\gd_1)} F_1 (z,X) \in \mJ_{\gl_1} (\G)_{\gd_1}
, \quad
\mL^{\gd_2}_{m_2, \gl_2 +2(m_2 +\gd_2)} F_2 (z,X) \in \mJ_{\gl_2}
(\G)_{\gd_2} .
\end{equation}
If $\mu_1, \mu_2 \in \frac12 \bZ$, we define the bilinear map
\[ [\, ,\,]^{\gl_1, \gl_2, P}_{\gd_1, \gd_2, \mu_1, \mu_2, n}:
QP^{m_1}_{\gl_1+2(m_1+ \gd_1)} (\G) \times QP^{m_2}_{ \gl_2+2(m_2+ \gd_2)}
(\G) \to \mF_{m_1 +m_2} [X] \]
of polynomials by
\begin{align} \label{E:36}
[F_1 , F_2& ]^{\gl_1, \gl_2, P}_{\gd_1, \gd_2, \mu_1, \mu_2, n} (z,X)\\
&= \sum^{m_1 + m_2}_{r=0} \sum^n_{\ell =0} \sum^{m_1 + m_2
-r}_{t=0} \frac {(-1)^\ell} {r!} \bi {n +\gl_1 -\mu_1 -1}
{n-\ell} \bi {n +\gl_2 -\mu_2-1}
{\ell} \notag\\
&\hspace{1.2in} \times
[\mL^{\gd_1}_{m_1, \gl_1 +2(m_1 +\gd_1)} F_1]^{\ell, t}_{\mu_1, \gd_1} (z) \notag\\
&\hspace{1.7in} \times [\mL^{\gd_2}_{m_2, \mu_2 +2(m_2 +\gd_2)}
F_2]^{n
  -\ell, m_1 + m_2 -r -t}_{\mu_2, \gd_2} (z) X^r ,\notag
\end{align}
where the square brackets on the right hand side are as in \eqref{E:7f}.

\begin{prop} \label{P:rc}
The formula \eqref{E:36} determines a bilinear map
\begin{equation} \label{E:rk}
[\, ,\,]^{\gl_1, \gl_2, P}_{\gd_1, \gd_2, \mu_1, \mu_2, n}: QP^{m_1}_{\gl_1+2(m_1+
  \gd_1)} (\G) \times QP^{m_2}_{ \gl_2+2(m_2+ \gd_2)} (\G) \to
QP^{m_1+m_2}_{\gl_1 +\gl_2+2(m_1+m_2 +n+\gd_1+\gd_2)} (\G)
\end{equation}
of quasimodular polynomials.
\end{prop}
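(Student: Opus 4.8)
The plan is to recognize the right-hand side of \eqref{E:36} as exactly the image under $\Pi^{\gd_1+\gd_2}_{m_1+m_2}$ of the $n$-th Rankin--Cohen bracket \eqref{E:83} of the two liftings in \eqref{E:x6}. Set
\[ \Phi_1 = \mL^{\gd_1}_{m_1,\gl_1+2(m_1+\gd_1)} F_1 \in \mJ_{\gl_1}(\G)_{\gd_1}, \quad \Phi_2 = \mL^{\gd_2}_{m_2,\gl_2+2(m_2+\gd_2)} F_2 \in \mJ_{\gl_2}(\G)_{\gd_2}, \]
so that the assertion becomes the identity
\[ [F_1, F_2]^{\gl_1,\gl_2,P}_{\gd_1,\gd_2,\mu_1,\mu_2,n} = \Pi^{\gd_1+\gd_2}_{m_1+m_2}\bigl([\Phi_1,\Phi_2]^J_{\mu_1,\mu_2,n}\bigr) \]
together with the observation that the right-hand side lands in the claimed space. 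With this reformulation the proposition reduces to results already in hand.

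First I would verify the displayed identity by comparing coefficients. By \eqref{E:8i} and \eqref{E:82} the coefficient of $X^{u+\gd_1+\gd_2}$ in $[\Phi_1,\Phi_2]^J_{\mu_1,\mu_2,n}$ is $\xi_{\mu_1,\mu_2,n,u}$. Applying the definition \eqref{E:mc} of $\Pi^{\gd_1+\gd_2}_{m_1+m_2}$, which shifts the index by $\gd_1+\gd_2$, the coefficient of $X^r$ in the projected series is $\tfrac{1}{r!}\,\xi_{\mu_1,\mu_2,n,\,m_1+m_2-r}$; substituting the explicit formula for $\xi$ and running the inner sum over $t$ from $0$ to $m_1+m_2-r$ reproduces \eqref{E:36} term by term. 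This is the only genuinely computational step, and it is where the bookkeeping has to be done carefully: one must track the shift built into \eqref{E:mc} and match the ranges of the $t$- and $\ell$-summations. No new invariance estimate is needed here, since all the $\G$-equivariance has already been absorbed into the earlier statements.

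Once the identity is in place the conclusion is immediate. By Proposition \ref{P:jw}, $[\Phi_1,\Phi_2]^J_{\mu_1,\mu_2,n}$ is a Jacobi-like form in $\mJ_{\gl_1+\gl_2+2n}(\G)_{\gd_1+\gd_2}$; by Proposition \ref{P:p4} the map $\Pi^{\gd_1+\gd_2}_{m_1+m_2}$ then carries it into $QP^{m_1+m_2}_{(\gl_1+\gl_2+2n)+2(m_1+m_2)+2(\gd_1+\gd_2)}(\G)$, and since
\[ (\gl_1+\gl_2+2n)+2(m_1+m_2)+2(\gd_1+\gd_2) = \gl_1+\gl_2+2(m_1+m_2+n+\gd_1+\gd_2), \]
this is precisely the target in \eqref{E:rk}. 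Bilinearity over $\bC$ follows from the bilinearity of $[\,,\,]^J_{\mu_1,\mu_2,n}$ and the linearity of the liftings $\mL^{\gd_i}_{m_i,\cdot}$ and of $\Pi^{\gd_1+\gd_2}_{m_1+m_2}$, noting that the binomial coefficients in \eqref{E:82} depend only on the fixed weights $\gl_1,\gl_2$ of the source spaces and hence behave as constants. Thus the substantive content lies entirely in Propositions \ref{P:jw} and \ref{P:p4}, and the main obstacle is simply confirming the coefficient-by-coefficient identification above rather than any further analysis.
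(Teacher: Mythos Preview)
Your proposal is correct and follows essentially the same approach as the paper: identify $[F_1,F_2]^{\gl_1,\gl_2,P}_{\gd_1,\gd_2,\mu_1,\mu_2,n}$ with $\Pi^{\gd_1+\gd_2}_{m_1+m_2}$ applied to the Rankin--Cohen bracket of the lifted Jacobi-like forms, then invoke Propositions \ref{P:jw} and \ref{P:p4}. Your write-up is in fact slightly more explicit than the paper's in spelling out the weight arithmetic and the bilinearity.
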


\begin{proof}
Let $F_1 (z,X)$ and $F_2 (z,X)$ be quasimodular polynomials as in
\eqref{E:oo}.  If $[\, ,\,]^J_n$ is as in \eqref{E:83}, then from
\eqref{E:8i} and \eqref{E:82} we obtain
\begin{align} \label{E:9i}
[\mL^{\gd_1}_{m_1, \gl_1 +2(m_1 +\gd_1)}& F_1, \mL^{\gd_2}_{m_2, \gl_2
  +2(m_2 +\gd_2)}  F_2]^J_{\mu_1, \mu_2, n} (z,X)\\
&\hspace{.4in}= \sum^\infty_{u=0} \wt{\xi}_u (z) X^{u +
  \gd_1 + \gd_2} \in \mJ_{\gl_1 +\gl_2} (\G)_{\gd_1 +\gd_2} ,\notag
\end{align}
where
\begin{align} \label{E:92}
\wt{\xi}_u (z) = \sum^{u }_{t=0} \sum^n_{\ell =0} (-1)^\ell & \bi
{n + \gl_1 -\mu_1 -1} {n-\ell} \bi {n + \gl_2 -\mu_2  -1} {\ell}\\
&\times [\mL^{\gd_1}_{m_1, \gl_1
  +2(m_1 +\gd_1)} F_1]^{\ell, t}_{\mu_1, \gd_1} (z) [\mL^{\gd_2}_{m_2, \gl_2
  +2(m_2 +\gd_2)} F_2]^{n -\ell, u -t}_{\mu_2, \gd_2} (z). \notag
\end{align}
Using \eqref{E:nn}, \eqref{E:36}. \eqref{E:9i} and \eqref{E:92}, we see
that
\begin{align*}
[F_1 , F_2]^{\gl_1, \gl_2, P}_{\gd_1, \gd_2, \mu_1, \mu_2, n} (z,X)
&= \sum^{m_1  + m_2}_{r=0} \frac 1{r!} \wt{\xi}_{m_1 + m_2 -r} (z) X^r\\
&= \Pi^{\gd_1 +\gd_2}_{m_1 + m_2} ([\mL^{\gd_1}_{m_1, \gl_1 +2(m_1 +\gd_1)}
F_1 , \mL^{\gd_2}_{m_2, \gl_2  +2(m_2 +\gd_2)}  F_2]^J_{\mu_1, \mu_2, n} (z,X)) ,
\end{align*}
which is a quasimodular polynomial belonging to
\[ QP^{m_1 +m_2}_{\gl_1+\gl_2 +2(m_1+m_2+n+\gd_1+\gd_2)} (\G) ;\]
hence the theorem follows.
\end{proof}

The bilinear map \eqref{E:rk} for quasimodular polynomials determines a
corresponding map for quasimodular forms, which may be regarded as a
Rankin-Cohen bracket.  We now consider a variation of such a bilinear map.
Given a nonnegative integer $m$ and quasimodular polynomials $F_1 (z,X)$
and $F_2 (z,X)$ as in \eqref{E:oo}, by applying the map $\Pi^{\gd_1
  +\gd_2}_m$ to the left hand side of \eqref{E:9i} we obtain
\begin{align} \label{E:ge}
\Pi^{\gd_1 +\gd_2}_m ([ & \mL^{\gd_1}_{m_1, \gl_1 +2(m_1 +\gd_1)}
F_1 , \mL^{\gd_2}_{m_2, \gl_2  +2(m_2 +\gd_2)}  F_2]^J_{\mu_1, \mu_2, n} (z,X))\\
&= \sum^m_{r=0} \frac 1{r!} \wt{\xi}_{m -r} (z) X^r \notag\\
&= \sum^m_{r=0} \sum^n_{\ell =0} \sum^{m -r+n}_{t=0} \frac
{(-1)^\ell} {r!} \bi {n +\gl_1 -\mu_1 -1} {n-\ell} \bi {n +\gl_2 -\mu_2 -1}
{\ell}
\notag\\
&\hspace{1.4in} \times
[\mL^{\gd_1}_{m_1, \gl_1 +2(m_1 +\gd_1)} F_1]^{\ell, t}_{\mu_1, \gd_1} (z) \notag\\
&\hspace{1.8in} \times [\mL^{\gd_2}_{m_2, \gl_2 +2(m_2 +\gd_2)} F_2]^{n
  -\ell, m -r -t}_{\mu_2, \gd_2} (z) X^r , \notag
\end{align}
which is a quasimodular polynomial belonging to
\[ QP^m_{\gl_1+\gl_2 +2(m +n+\gd_1+\gd_2)} (\G) .\]
Thus, if we set
\[ [[ F_1, F_2 ]]^{m}_{n} (z,X) = \Pi^{\gd_1 +\gd_2}_m ([\mL^{\gd_1}_{m_1, \gl_1
  +2(m_1 +\gd_1)} F_1 , \mL^{\gd_2}_{m_2, \gl_2  +2(m_2 +\gd_2)}
F_2]^J_{\mu_1, \mu_2, n} (z,X)) ,\]
we obtain the bilinear map
\begin{equation} \label{E:b3}
[[\, ,\,]]^{m}_{n} : QP^{m_1}_{\gl_1+2(m_1+   \gd_1)} (\G) \times
QP^{m_2}_{
  \gl_2+2(m_2+ \gd_2)} (\G) \to QP^m_{\gl_1+\gl_2 +2(m +n+\gd_1+\gd_2)}
(\G) .
\end{equation}
If $G (z,X) = \sum^q_{k=0} g_k (z) X^k  \in \mF_q [X]$ with $q \geq 0$, we
set
\begin{align} \label{E:4f}
[[G]]^{\ell, k}_{\mu, \gd} (z) = \sum_{j=0}^{\ell} (q-k-j)!  &
(-1)^{\ell -j} \bi {\ell} {j}  \\
&\times \frac{(k  +\gd+\ell-j)! (k
+\gd+\mu-\ell-j-1)!}{(k+\gd)!(k+\gd+\mu-1)!}
 g^{(j)}_{k+\ell-j} (z) \notag
\end{align}
for $z \in \mH$, $\gd \geq 0$ and $0 \leq k \leq q$.

\begin{prop}
Let $F_1 (z,X)$ and $F_2 (z,X)$ be as in Proposition \ref{P:rc}, and let
$m$ be a nonnegative integer with $m +2n \leq \min \{ m_1, m_2 \}$.  Then
the bilinear map \eqref{E:b3} is given by
\begin{align} \label{E:pp}
[[F_1 , F_2]]^{m}_{n} (z,X) &= \sum^{m_1 +
  m_2}_{r=0} \sum^n_{\ell =0} \sum^{m_1 + m_2 -r+n}_{t=0} \frac
{(-1)^\ell} {r!} \bi {n +\gl_1 -\mu_1 -1} {n-\ell} \bi {n +\gl_2 -\mu_2-1}
{\ell}\\
&\hspace{1.8in} \times [[F_1]]^{\ell, t}_{\mu, \gd_1} (z) [[F_2]]^{n
-\ell, m -r -t}_{\mu, \gd_2} (z) X^r, \notag
\end{align}
and the diagram
\begin{equation} \label{E:qq}
\begin{CD}
\mJ_{\gl_1} (\G)_{\gd_1} \times \mJ_{\gl_2}(\G)_{\gd_2}
@>  [\, , \, ]^J_{\mu_1, \mu_2, n} >>
  \mJ_{\gl_1+\gl_2+2n}(\G)_{\gd_1+\gd_2 }\\
@V (\Pi_{m_1}^{\gd_1}, \Pi_{m_2}^{\gd_2})
VV @VV  \Pi_{m_1+m_2}^{\gd_1+\gd_2} V\\
QP^{m_1}_{\gl_1+2(m_1+ \gd_1)}(\G) \times QP^{m_2}_
{\gl_2+2(m_2+ \gd_2)}(\G) @> [[\, ,\,]]_m >> QP^m_{\gl_1+\gl_2 +2(m
  +n +\gd_1+\gd_2)} (\G)
\end{CD}
\end{equation}
is commutative.
\end{prop}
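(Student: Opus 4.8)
The plan is to deduce both assertions from the single definition $[[F_1,F_2]]^{m}_{n} = \Pi^{\gd_1+\gd_2}_m\bigl([\mL^{\gd_1}_{m_1}F_1,\mL^{\gd_2}_{m_2}F_2]^J_{\mu_1,\mu_2,n}\bigr)$, together with the expansion already recorded in \eqref{E:ge}. By Proposition \ref{P:jw} the Rankin--Cohen bracket of the two canonical liftings is a Jacobi-like form in $\mJ_{\gl_1+\gl_2+2n}(\G)_{\gd_1+\gd_2}$, so applying $\Pi^{\gd_1+\gd_2}_m$ (as in \eqref{E:7g}) automatically yields a quasimodular polynomial of the asserted weight $\gl_1+\gl_2+2(m+n+\gd_1+\gd_2)$ and degree $\le m$; the substance is the closed form \eqref{E:pp} and the fact that it does not depend on the choice of lift.

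First I would establish \eqref{E:pp}. The expansion \eqref{E:ge} writes $[[F_1,F_2]]^{m}_{n}$ through the operators $[\mL^{\gd_i}_{m_i}F_i]^{\cdot,\cdot}_{\mu_i,\gd_i}$ of \eqref{E:7f}, which involve only the coefficients $\phi_{i,s}$ of the liftings. I would then invoke the corollary to Proposition \ref{P:kw}, namely the reversal relation $\phi_{i,s}=(m_i-s)!\,f_{i,m_i-s}$ for $0\le s\le m_i$ (equation \eqref{E:cm}, used already as \eqref{E:rz}), and substitute it into \eqref{E:7f}. After reindexing the summation and collecting the shifted factorial factors, each $[\mL^{\gd_i}_{m_i}F_i]^{\ell,t}_{\mu_i,\gd_i}$ is converted into the operator $[[F_i]]^{\ell,t}_{\mu_i,\gd_i}$ of \eqref{E:4f}, in which the extra factor $(q-k-j)!=(m_i-t-j)!$ is precisely the factorial produced by the reversal. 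Substituting these back into \eqref{E:ge} then gives \eqref{E:pp}. The hypothesis $m+2n\le\min\{m_1,m_2\}$ enters exactly here: in \eqref{E:ge} the index $t$ runs up to $m+n$ and the derivative index $\ell$ up to $n$, so the largest coefficient index touched is at most $m+2n-1<m_i$, which makes the reversal legitimate and simultaneously keeps the argument $m_i-t-j$ of the factorial in \eqref{E:4f} nonnegative.

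For the commutativity of \eqref{E:qq} I would argue by locality. Given arbitrary $\Phi_i\in\mJ_{\gl_i}(\G)_{\gd_i}$ with $F_i=\Pi^{\gd_i}_{m_i}\Phi_i$, relation \eqref{E:nn} gives $\Pi^{\gd_i}_{m_i}(\mL^{\gd_i}_{m_i}F_i)=F_i$, so $\Phi_i$ and $\mL^{\gd_i}_{m_i}F_i$ carry identical coefficients of $X^{\gd_i},\dots,X^{m_i+\gd_i}$. By the coefficient formulas \eqref{E:8i}, \eqref{E:82} and \eqref{E:7f}, the coefficients of $X^0,\dots,X^m$ in $[\Phi_1,\Phi_2]^J_{\mu_1,\mu_2,n}$ depend only on the coefficients of $\Phi_1,\Phi_2$ of index at most $m+n$, and $m+n\le m_i$ by hypothesis. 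Hence replacing each $\Phi_i$ by $\mL^{\gd_i}_{m_i}(\Pi^{\gd_i}_{m_i}\Phi_i)$ leaves $\Pi^{\gd_1+\gd_2}_m[\Phi_1,\Phi_2]^J$ unchanged, which is exactly the identity $\Pi^{\gd_1+\gd_2}_m[\Phi_1,\Phi_2]^J_{\mu_1,\mu_2,n}=[[\Pi^{\gd_1}_{m_1}\Phi_1,\Pi^{\gd_2}_{m_2}\Phi_2]]^{m}_{n}$ recorded by the diagram \eqref{E:qq}.

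The main obstacle is the combinatorial bookkeeping in the derivation of \eqref{E:pp}: one must check that the reversal substitution followed by reindexing collapses the triple sum of \eqref{E:ge} onto \eqref{E:pp} with the binomial coefficients $\bi{n+\gl_i-\mu_i-1}{\cdot}$, the factor $1/r!$, and all three summation ranges matching, and in particular that the shifted Pochhammer-type factorials of \eqref{E:7f} reorganize into the differently shifted ones of \eqref{E:4f}. Once this bookkeeping is in hand, the conceptual part--Proposition \ref{P:jw} for the target space and the locality argument for the independence of the lift--is routine.
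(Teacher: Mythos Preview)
Your proposal is correct and follows essentially the same route as the paper's own proof: both start from the expansion \eqref{E:ge}, use the hypothesis $m+2n\le\min\{m_1,m_2\}$ to ensure that only the first $m_i+1$ coefficients of each lifting $\mL^{\gd_i}_{m_i}F_i$ enter the computation, invoke the reversal relation $\phi_{i,s}=(m_i-s)!\,f_{i,m_i-s}$ coming from $\Pi^{\gd_i}_{m_i}\circ\mL^{\gd_i}_{m_i}=\mathrm{id}$ to identify $[\mL^{\gd_i}_{m_i}F_i]^{\cdot,\cdot}_{\mu_i,\gd_i}$ with $[[F_i]]^{\cdot,\cdot}_{\mu_i,\gd_i}$, and deduce both \eqref{E:pp} and the commutativity of \eqref{E:qq} from this locality observation. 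Your write-up is somewhat more explicit about the locality argument for the diagram (spelling out that any $\Phi_i$ with $\Pi^{\gd_i}_{m_i}\Phi_i=F_i$ gives the same image), but the logic is identical.
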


\begin{proof}
If $m +2n \leq \min \{ m_1, m_2 \}$, then we see from \eqref{E:7f} that the
functions
\[ [\mL^{\gd_1}_{m_1, \gl_1 +2(m_1 +\gd_1)} F_1]^{\ell, t}_{\mu_1, \gd_1}
(z), \quad [\mL^{\gd_2}_{m_2, \gl_2 +2(m_2 +\gd_2)} F_2]^{n
  -\ell, m -r -t}_{\mu_2, \gd_2} (z) \]
in \eqref{E:ge} involve coefficients of $X^j$ only for $j \leq  \min \{
m_1, m_2 \}$ in the Jacobi-like forms
\[ \mL^{\gd_1}_{m_1, \gl_1 +2(m_1 +\gd_1)} F_1 (z,X), \quad \mL^{\gd_2}_{m_2,
  \gl_2 +2(m_2 +\gd_2)} F_2 (z,X) . \]
From this and the relations
\[ \Pi^{\gd_1}_{m_1} \mL^{\gd_1}_{m_1, \gl_1 +2(m_1 +\gd_1)} F_1 (z,X) =
F_1 (z,X), \quad \Pi^{\gd_2}_{m_2} \mL^{\gd_2}_{m_2,
  \gl_2 +2(m_2 +\gd_2)} F_2 (z,X) = F_2 (z,X) ,\]
it follows that
\[ [\mL^{\gd_1}_{m_1, \gl_1 +2(m_1 +\gd_1)} F_1]^{\ell, t}_{\mu_1, \gd_1} =
[[F_1]]^{\ell, t}_{\mu_1, \gd_1} ,\]
\[ [\mL^{\gd_2}_{m_2, \gl_2 +2(m_2 +\gd_2)} F_2]^{n -\ell, m -r
  -t}_{\mu_2, \gd_2} = [[F_2]]^{n -\ell, m -r -t}_{\mu_2, \gd_2} ;\]
hence we obtain \eqref{E:pp}.  The commutativity of the diagram
\eqref{E:qq} also follows from the above observations.
\end{proof}

\section{\bf{Quasimodular forms of half-integral weight}}

In this section we modify the definitions of Jacobi-like forms and
quasimodular forms to include the half-integral weights.  We also describe
Hecke operators on spaces of those forms.  Throughout this section we
assume that $\G = \G_0 (4N)$ for some positive integer $N$.

Let $\gt(z)$ be the theta series given by
\[ \gt (z) = \sum^\infty_{n= -\infty} e^{ 2\pi i n^2 z} \]
for $z \in \mH$, and set
\begin{equation} \label{hE:kz}
\fj (\g,z) = \frac {\gt (\g z)} {\gt (z)}, \quad \fk (\g,z) = 2 \fj
(\g,z)^{-1} \frac {d} {dz} \fj (\g,z)
\end{equation}
for all $\g \in \G$ and $z \in \mH$.  If $\g = \sm a&b \\ c&d \esm$, it is
known that
\[ \fj (\g, z)^2 = \Bigl( \frac {-1}d \Bigr) (cz+d) ,\]
where $( \frac {\;\cdot\;} {\; \cdot \;})$ denotes the Legendre symbol.
Furthermore, the resulting maps $\fj, \fk: \G \times \mH \to \bC$
satisfy
\begin{equation} \label{hE:kk}
\fj (\g \g', z) = \fj (\g, \g' z)\fj (\g', z), \quad \fk (\g \g',z) = \fk
(\g',z) + \fj (\g', z)^{-2} \fk (\g, \g' z)
\end{equation}
for all $\g, \g' \in \G$ and $z \in \mH$.

We recall that $\mF$ denotes the ring of holomorphic functions on $\mH$ and
$\mF [[X]]$ is the complex algebra of formal power series in $X$ with
coefficients in $\mF$.  From now on we assume that $\gl$ is a half integer,
so that $2\gl$ is an odd integer.  Given elements $f \in \mF$, $\Phi (z,X)
\in \mF [[X]]$, and $\g \in \G$, we set
\begin{equation} \label{hE:xt}
(f \mid_\gl \g) (z) = \fj (\g, z)^{-2\gl} f (z)
\end{equation}
\begin{equation} \label{hE:yt}
(\Phi \mid^J_\gl \g) (z,X) = \fj (\g, z)^{-2\gl} e^{- \fk (\g,
z) X} \Phi (\g z, \fj (\g, z)^{-2} X)
\end{equation}
for all $z \in \mH$.  If $\g'$ is another element of $\G$,
then by using \eqref{hE:kk} it can be shown that
\[ f \mid_\gl (\g \g') = (f \mid_\gl \g) \mid_\gl  \g', \quad \Phi
\mid^J_\gl (\g \g') = (\Phi \mid^J_\gl \g) \mid^J_\gl  \g' ,\]
so that the operations $\mid_\gl$ and $\mid^J_\gl$ determine right
actions of $\G$ on $\mF$ and $\mF [[X]]$, respectively.

Given a nonnegative integer $m$, let $\mF_m [X]$ be the complex algebra of
polynomials in $X$ over $\mF$ of degree at most $m$ as before.  If $\g \in
\G$ and $F (z,X) \in \mF_m [X]$, we set
\begin{equation} \label{hE:js}
(F \md_\gl \g) (z, X) = \fj (\g, z)^{-2\gl} F (\g z, \fj (\g,
z)^2 ( X - \fk (\g, z)))
\end{equation}%
for all $z \in \mH$.  Then this formula determines a right action $\md_\gl$
of $\G$ on $\mF_m [X]$.  Let $\chi$ be a character on $\G$.

\begin{dfn}
Given a half integer $\gl$ and a nonnegative integer $m$, an
element $f \in \mF$ is a {\em quasimodular form for $\G$ of weight
$\gl$ and depth at most $m$ with character $\chi$\/} if there are
functions $f_0, \ldots, f_m \in \mF$ such that
\begin{equation} \label{hE:qz}
(f \mid_{\gl} \g) (z) = \sum^m_{r=0} \chi(\g) f_r (z) \fk (\g, z)^r
\end{equation}
for all $z \in \mH$ and $\g \in \G$, where $\fk (\g, z)$ is as in
\eqref{hE:kz} and $\mid_{\gl}$ is the operation in
\eqref{hE:xt}. We denote by $QM^m_{\gl}(\G, \chi)$ the space of
quasimodular forms for $\G$ of weight $\gl$ and depth $m$ with
character $\chi$.
\end{dfn}

If we denote by $M_{\gl} (\G, \chi)$ the space of modular forms for $\G$
of weight $\gl$ and character $\chi$, then we see that
\[ QM^0_{\gl} (\G, \chi) = M_{\gl} (\G, \chi). \]
We also note that $f = f_0$ if $f$ satisfies \eqref{hE:qz}.

Let $f \in \mF$ be a quasimodular form belonging to $QM^m_{\gl}(\G, \chi)$
and satisfying \eqref{hE:qz}.  Then we define the corresponding
polynomial $(\mQ_{\gl}^m f) (z,X) \in \mF_m [X]$ by
\begin{equation} \label{hE:tp}
(\mQ_{\gl}^m f) (z,X) = \sum^m_{r=0} f_r (z) X^r,
\end{equation}
so that we obtain the linear map
\[ \mQ_{\gl}^m: QM^m_{\gl}(\G, \chi) \to \mF_m [X] \]
for each of nonnegative integers   $m.$

\begin{dfn}
(i) A {\em quasimodular polynomial for $\G$ of weight $\gl$ and
degree at most $m$\/} is an element of $\mF_m [X]$ that is
$\G$-invariant with respect to the right $\G$-action in
\eqref{hE:js}.  We denote by $QP^m_{\gl}(\G,\chi)$ the space of all
quasimodular polynomials for $\G$ of weight $\gl$ and degree at most $m$
with character $\chi$.

(ii) A formal power series $\Phi (z,X)$ belonging
to $\mF [[X]]$ is a {\it Jacobi-like form for $\G$ of weight
$\gl$\/} with character $\chi$ if it satisfies
\[
(\Phi \mid^J_\gl \g) (z,X) = \chi(\g) \Phi (z,X)
\]
for all $z \in \mcal H$ and $\g \in \G$, where $\mid^J_\gl$ is as
in \eqref{hE:yt}.
\end{dfn}

We denote by $\mJ_\gl (\G, \chi)$ the space of all Jacobi-like
forms for $\G$ of weight $\gl$ with character $\chi$, and set
\[ \mJ_\gl (\G, \chi)_\gd = \mJ_\gl (\G, \chi) \cap \mF [[X]]_\gd \]
for each nonnegative integer $\gd$.

We now extend the notion of Hecke operators to the half-integral cases.  We
first recall that $GL^+ (2, \mbb R)$ acts on $\mcal H$ by linear fractional
transformations, and set
\[ \mG =\{ (\ga, \det(\ga)^{-1/2} \fj (\ga, z)) \mid \ga \in GL^+ (2,
\bR) \} .\]
Then $\mG$ is a group with respect to the multiplication given by
\[ (\ga,  \det(\ga)^{-1/2} \fj(\ga, z)) \cdot
(\gb, \det(\gb)^{-1/2} \fj(\gb,z))=(\ga \gb, \det(\ga \gb)^{-1/2}\fj(\ga,
\gb(z))\fj(\gb, z)) .\]
 We shall write
$\tilde{\ga} = (\ga, \det(\ga)^{-1/2} \fj (\ga, z)) \in \mG$.

As in the case of $GL^+ (2, \bR)$, two subgroups $\G_1$ and $\G_2$ of $\mG$
are commensurable, or $\G_1 \sim \G_2$, if $\G_1 \cap \G_2$ has finite
index in both $\G_1$ and $\G_2$.  Given a subgroup $\gD$ of $\mG$, the
subgroup
\[ \wt{\gD} = \{ \tilde{g} \in \mG \mid g \gD g^{-1} \sim \gD \} \subset \mG \]
is the commensurator of $\gD$.  If $\mG_1= \{\tilde{\ga}\in \mG
\mid \det \ga =1\}$ and if $\G$ is a discrete subgroup of $\mG_1$,
then the double coset $\G \ga \G$ with $\tilde{\ga} \in \wt{\G}$
has a decomposition of the form
\begin{equation} \label{hE:xm}
\G \ga \G = \coprod_{i=1}^s \G \ga_i
\end{equation}
for some elements $\ga_i \in \mG$ with $1 \leq i \leq s$.

Given $\gl$ with $2\gl \in \bZ$ odd, we extend the actions of $SL(2,\bR)$
in \eqref{E:xt}, \eqref{E:yt} and \eqref{E:js} to those of $\mG$ by
setting
\[
(f \mid_\gl \ga) (z) = \det(\ga)^{\gl} \fj (\ga, z)^{-2\gl} f(\ga
z)
\]
\begin{equation} \label{hE:sn}
(\Phi \mid^J_\gl \ga) (z,X) = (\det \xi)^{\gl} \fj (\ga,z)^{-2\gl}
e^{-\fk(\ga, z)} \Phi(\ga z, (\det \ga) \fj(\ga,z)^{-2}X),
\end{equation}
\[
(F \md_\gl \ga) (z, X) = (\det \ga)^{ {\gl} } \fj (\ga,z)^{-2\gl}
F (\ga z, (\det \ga)^{-1} \fj(\ga, z)^2( X - \fk (\ga, z)))
\]
for all $z \in \mH$, $\tilde{\ga} \in \mG$, $f \in \mF$, $\Phi
(z,X) \in \mF [[X]]$ and $F (z,X) \in \mF_m [X]$.  Then we have
\begin{equation} \label{hE:2hp}
(f \mid_\gl \ga) \mid_\gl \ga' = f \mid_\gl (\ga\ga'), \quad (\Phi
\mid^J_\gl \ga) \mid^J_\gl \ga' = \Phi \mid^J_\gl (\ga\ga'), \quad
(F \md_\gl \ga) \md_\gl \ga' = F \md_\gl (\ga\ga')
\end{equation}
for all $\tilde{\alpha}, \tilde{\alpha'}, $  and therefore $\mG$
acts on $\mF$, $\mF [[X]]$ and $\mF_m [X]$ on the right.

Let $\tilde{\ga} \in \wt{\G} \subset \mG$ be an element whose
double coset is as in \eqref{hE:xm}.  Then the associated Hecke
operator
\begin{equation} \label{hE:fs}
T_\gl: M_\gl (\G, \chi) \to M_\gl (\G, \chi)
\end{equation}
is given as usual by
\[ (T_\gl (\tilde{\ga}) f) (z) = \det(\ga)^{- {\gl} -1} \sum_{i=1}^s
(\chi(\ga_i)  f \mid_\gl \ga_i) (z) \] for all $f \in M_\gl (\G,
\chi)$ and $z \in \mH$. Similarly, given a Jacobi-like form $\Phi
(z,X) \in \mJ_\gl (\G, \chi)$ and a quasimodular polynomial $F
(z,X) \in QP^m_\gl (\G, \chi)$, we set
\begin{align} \label{hE:jh}
(T^J_\gl (\tilde{\ga}) \Phi) (z,X) &= \det(\ga)^{- {\gl}
-1}\sum_{i=1}^s (\Phi \mid^J_\gl \ga_i) (z,
X),\\
\label{hE:ju} (T^P_{\gl} (\tilde{\ga}) F) (z,X) &= \det(\ga)^{-
{\gl} -1}\sum_{i=1}^s (F \md_{\gl} \ga_i) (z, X)
\end{align}
for all $z \in \mcal H$.

\begin{prop}
For each $\tilde{\ga} \in \wt{\G}$ the formal power series
$(T^J_\gl (\tilde{\ga}) \Phi) (z,X)$ and the polynomial $(T^P_\gl
(\tilde{\ga}) F) (z,X)$ given by \eqref{hE:jh} and \eqref{hE:ju},
respectively, are independent of the choice of the coset
representatives $\ga_1, \ldots, \ga_s$, and the maps $\Phi \mapsto
T^J_\gl (\tilde{\ga}) \Phi$ and $F \mapsto T^P_\gl (\tilde{\ga})
F$ determine linear endomorphisms
\begin{equation} \label{E:8m}
T^J_\gl (\tilde{\ga}): \mJ_\gl (\G, \chi) \to \mJ_\gl(\G, \chi)
,\quad T^P_\gl (\tilde{\ga}): QP^m_\gl (\G, \chi) \to QP^m_\gl
(\G, \chi) .
\end{equation}
\end{prop}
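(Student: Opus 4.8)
The plan is to transcribe, almost verbatim, the two-step argument used for the integer-weight Hecke operator in Section 4 (the proposition showing $T^P_\gl(\ga)$ preserves $QP^m_\gl(\G)$), since the present statement is its metaplectic refinement: $GL^+(2,\bR)$, $\fJ$, $\fK$ are replaced by $\mG$, $\fj$, $\fk$, and the scalar normalization $\det(\ga)^{-\gl-1}$ is inserted. The structural inputs are the right-action laws \eqref{hE:2hp} for $\mid^J_\gl$ and $\md_\gl$ on $\mG$, the double-coset decomposition \eqref{hE:xm}, and two standard facts about the representatives: any other complete set has the form $\gb_i=\g_i\ga_i$ with $\g_i\in\G$ (after reindexing), and, for $\g\in\G$, the set $\{\ga_i\g\}$ is again complete because $\G\ga\G\g=\G\ga\G$ (as $\G\g=\G$). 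Since $\G\subset\mG_1$, each such $\g_i$ has $\det\g_i=1$, so the normalizing factor $\det(\ga)^{-\gl-1}$ is insensitive to the change of representatives and may be carried along as an inert scalar.

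For independence I would, given $\gb_i=\g_i\ga_i$, apply \eqref{hE:2hp} to write $\Phi\mid^J_\gl\gb_i=(\Phi\mid^J_\gl\g_i)\mid^J_\gl\ga_i$ and then use the $\chi$-automorphy of $\Phi\in\mJ_\gl(\G,\chi)$ to replace $\Phi\mid^J_\gl\g_i$ by a scalar multiple of $\Phi$, so that each summand becomes a multiple of $\Phi\mid^J_\gl\ga_i$; the character factors so introduced must then be absorbed against the weights the representatives carry, after which the two sums agree. The same computation with $\md_\gl$ in place of $\mid^J_\gl$ treats $T^P_\gl$, where one notes in addition that $\md_\gl$ sends $\mF_m[X]$ into itself because the substitution $X\mapsto\fj(\ga,z)^2(X-\fk(\ga,z))$ is affine in $X$, so the output remains a polynomial of degree $\le m$. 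For the endomorphism property I would compute $(T^J_\gl(\tilde\ga)\Phi)\mid^J_\gl\g$ for $\g\in\G$, push the slash through the finite sum by \eqref{hE:2hp} to reach $\sum_i\Phi\mid^J_\gl(\ga_i\g)$, and then invoke the second standard fact: writing $\ga_i\g=\g_i'\ga_{\pi(i)}$ and reindexing by $\pi$, the transformation law of $\Phi$ produces exactly the factor $\chi(\g)$, exhibiting $T^J_\gl(\tilde\ga)\Phi$ as a $\chi$-eigenvector under $\mid^J_\gl$, i.e. an element of $\mJ_\gl(\G,\chi)$; the identical computation with $\md_\gl$ places $T^P_\gl(\tilde\ga)F$ in $QP^m_\gl(\G,\chi)$. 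Linearity of both maps is immediate from linearity of the slash actions.

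The main obstacle is purely the character bookkeeping. Changing representatives introduces the factors $\chi(\g_i)$ (for independence) and $\chi(\g_i')$ with $\ga_i\g=\g_i'\ga_{\pi(i)}$ (for equivariance), and collapsing these to the trivial factor and to $\chi(\g)$ respectively forces one to pin down how $\chi$ is extended from $\G$ to the representatives $\ga_i\in\wt{\G}$ and how those weights enter \eqref{hE:jh} and \eqref{hE:ju}. Concretely, the sums behave correctly once $\chi$ is extended multiplicatively so that $\chi(\g_i\ga_i)=\chi(\g_i)\chi(\ga_i)$ and the weight attached to $\ga_i$ is compatible with the automorphy convention --- this is exactly the mechanism already visible in the modular-form operator \eqref{hE:fs}, whose factor $\chi(\ga_i)$ plays the analogous role. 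Verifying that these weighted sums are genuinely independent of the $\ga_i$ and carry the correct nebentypus is the one step that is not a mechanical transcription of the integer-weight proof; everything else follows formally from \eqref{hE:2hp} and the coset combinatorics.
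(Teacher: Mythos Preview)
Your proposal is correct and takes essentially the same approach as the paper: the paper's entire proof reads ``This can be proved as in the case of the usual Hecke operators for modular forms,'' i.e.\ exactly the transcription of the Section~4 argument that you outline. You are in fact more careful than the paper, since you flag the character bookkeeping --- note that the definitions \eqref{hE:jh} and \eqref{hE:ju} omit the $\chi(\ga_i)$ weights present in the modular-form operator just above them, which is precisely the discrepancy you isolate as the one non-mechanical point; the paper does not comment on this.
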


\begin{proof}
This can be proved as in the case of the usual Hecke operators for modular
forms.
\end{proof}

The endomorphisms $T^J_\gl (\tilde{\ga})$ and $T^P_\gl
(\tilde{\ga})$ are Hecke operators on $\mJ_\gl (\G, \chi)$ and
$QP^m_\gl (\G, \chi)$, respectively, for half-integral Jacobi-like
forms and quasimodular polynomials with character.

\section{\bf{Shimura Correspondences}}

In the classical theory of modular forms, a Shimura correspondence provides
a map from from half integral weight modular forms to integral weight
modular forms that is Hecke equivariant.  In this section we consider
similar maps for quasimodular forms.

First, we review Shimura's construction of a Hecke-equivariant map from
half integral weight cusp forms to integral weight cusp forms (see
\cite{Sh73}).  Let $\chi$ be a Dirichlet character, so that the associated
$L$-function is given by
\[ L(s,\chi) =\sum_{m=1}^{\infty}\frac{\chi (m)}{m^{s}} .\]
Given a half integer $\gl$ with $2\gl$ odd, let $S_\gl (\G_0 (N), \chi)$ be
the space of cusp forms for $\G_0 (N)$ of weight $\gl$ with character
$\chi$.  We consider an element $f \in S_\gl (\G_0 (N), \chi)$, so that it
satisfies
\[ (f \mid_{\gl} \g) (z) = \fj (\g,z)^{-2\gl} f(\g z) =\chi(d) f(z) \]
for all $\g \in \G_0(N)$ and $z \in \mH$, where $d$ is the
$(2,2)$-entry of $\g$.  We recall that the Hecke operators
$T^{N}_{ \gl,\chi}(\tilde{\ga})$ on $S_{\gl}(\G_0(4N), \chi)$ are
given by
\[ T^{N}_{ \gl,\chi}(\tilde{\ga}) f = (\det \ga)^{-  \gl -1} \sum_{\nu=1}^r
\chi(\ga_{\nu}) (f|_{\gl}\ga_{\nu}) . \] The following is the main
result obtained by Shimura in \cite{Sh73}.

\begin{thm} \label{T:5h}
Suppose that $g(z) =\sum_{n=1}^{\infty}b(n)q^n \in S_{k+ 1/2}(\G_0(4N),
\chi)$ with $q = e^{2\pi i z}$ is a half-integral weight cusp form with
$k\geq 1.$ Let $t$ be a positive square-free integer, and define the
Dirichlet character $\psi_t$ by $\psi_t (n) =\chi (n) (\frac {-1}{n})^{k}
(\frac {t}{n})$.  If the complex numbers $A_t(n)$ are defined by
\[ \sum_{n=1}^{\infty}\frac{A_t(n)}{n^s} =L(s-k+1, \psi_t)
\sum_{n=1}^{\infty}\frac{b(tn^2)}{n^s}, \]
then the function
\[ S_{t,k}(g(z)) =\sum_{n=1}^{\infty} A_t(n) q^n \]
is a weight $2k$ modular form belonging to $M_{2k} (\G_0 (2N), \chi^2)$.
If $k\geq 2$ then $S_{t,k}(g(z))$ is a cusp form.  Furthermore, if $k=1,$
then $S_{t, 1}(g(z))$ is a cusp form if $g(z)$ is in the orthogonal
complement of the subspace of $S_{3/2}(\G_0(4N), \chi)$
spanned by single variable theta functions.
\end{thm}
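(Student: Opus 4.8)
The statement is a restatement of Shimura's theorem, so the plan is to reproduce the analytic argument of \cite{Sh73}, whose engine is Weil's converse theorem. To prove that $S_{t,k}(g(z))$ lies in $M_{2k}(\G_0(2N),\chi^2)$ it suffices to show that the Dirichlet series defining it, together with all of its twists by Dirichlet characters, admit analytic continuation, are of moderate growth in vertical strips, and satisfy the functional equation attached to a weight-$2k$ form on $\G_0(2N)$ with nebentypus $\chi^2$. By construction the basic Dirichlet series is
\[ D_t(s) = L(s-k+1,\psi_t)\sum_{n=1}^\infty \frac{b(tn^2)}{n^s}, \]
so the heart of the matter is the analytic continuation and functional equation of $D_t(s)$.

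First I would realize $D_t(s)$ as a Rankin--Selberg convolution. Forming the product of $g$ with the weight $1/2$ theta series $\gt_{\psi_t}$ attached to $\psi_t$ and integrating against a non-holomorphic Eisenstein series $E(z,s)$ over $\G_0(4N)\bk\mH$, the unfolding of $E(z,s)$ collapses the integral, up to explicit gamma factors, to $\sum_n b(tn^2)\,n^{-s}$, while the factor $L(s-k+1,\psi_t)$ is produced by the Fourier coefficients of $\gt_{\psi_t}$. The functional equation of $E(z,s)$ under $s\mapsto 1-s$ then transfers, after matching gamma factors, to a functional equation for $D_t(s)$ of the shape demanded of a weight-$2k$ form.

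To feed Weil's theorem I must produce the same kind of functional equation for every twist $\sum_{n} A_t(n)\rho(n)\,n^{-s}$ by a primitive Dirichlet character $\rho$ of conductor prime to the level. These follow by twisting the theta series (equivalently $g$) by $\rho$ and repeating the Rankin--Selberg computation; tracking the conductors of $\gt_{\psi_t}$, of $\rho$, and of the Eisenstein series shows that the twisted functional equations carry exactly the conductor, gamma factor, and root number predicted for level $\G_0(2N)$ and character $\chi^2$. Weil's converse theorem then yields $S_{t,k}(g(z))\in M_{2k}(\G_0(2N),\chi^2)$.

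Finally, the cuspidality assertions come from locating the poles of $D_t(s)$ and its twists, equivalently the residues of $E(z,s)$ and the poles of $L(s-k+1,\psi_t)$. When $k\ge 2$ no such poles occur, so all the Dirichlet series are entire and the constant terms of $S_{t,k}(g)$ at every cusp vanish, giving a cusp form. The delicate point, and the step I expect to be the main obstacle, is the borderline case $k=1$ (weight $3/2$), where the single-variable theta functions produce an Eisenstein-type contribution that can prevent cuspidality; restricting $g$ to the orthogonal complement in $S_{3/2}(\G_0(4N),\chi)$ of the span of these theta functions removes precisely that contribution. Carrying out the full twisted bookkeeping needed for Weil's theorem, and isolating this $k=1$ obstruction, is where essentially all of the work lies.
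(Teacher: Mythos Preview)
The paper does not prove this theorem at all: it is stated as a known result and attributed to Shimura, with the sentence ``The following is the main result obtained by Shimura in \cite{Sh73}'' preceding the statement and no proof environment following it. So the paper's ``proof'' is simply the citation \cite{Sh73}.

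Your proposal, by contrast, is a sketch of Shimura's actual argument from \cite{Sh73}: realize the Dirichlet series as a Rankin--Selberg integral of $g$ against a theta series and an Eisenstein series, deduce the functional equations for the series and all its twists, and then invoke Weil's converse theorem. This is indeed the skeleton of Shimura's proof, and your identification of the $k=1$ subtlety (the contribution from unary theta series) is accurate. What you have written is a plausible high-level outline, though of course the actual execution in \cite{Sh73} involves substantial computation that your sketch only gestures at. For the purposes of this paper, however, a citation suffices, since the theorem is quoted as background rather than proved.
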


From Theorem \ref{T:5h} we obtain the Hecke equivariant Shimura map
\[ \sh_{\gl, t, \chi} : S_{\gl}(\G_0(4N), \chi) \to M_{2\gl-1}( \G_0(2N), \chi^2) \]
defined by
\[
\sh_{\gl,t,\chi} (f(z)) = \sum^\infty_{n =1} A_{\gl,t}(n)q^n
\]
for
\[ f(z) = \sum^\infty_{n =1} a_{\gl}(f, n)q^n ,\]
where
\[
A_{\gl,t}(n)= L(s-\gl+ 5/2, \chi_t) a_{\gl}(f, n^2 t)
\]
for all $n \geq 1$.

In order to discuss the quasimodular analog of the Shimura map, we consider
a quasimodular polynomial $F (z,X) \in QP_{\gl+2m}^{m -r} (\G_0
(4N), \chi)$ of the form
\begin{equation} \label{E:64}
F (z,X) = \sum_{u=0}^{m-r} f_u (z) X^u
\end{equation}
for $z \in \mH$, and set
\begin{equation} \label{E:28}
(\qs^{m,m', r}_{\gl, t, \chi} F) (z,X) = \sum_{\ell=0}^{m'-r} \frac{\BG
  (m'-r+1)\BG (2\gl +4r) \sh^{(m'-r-\ell)}_{\gl+2r,t,\chi} (f_{m-r}) (z)}
   {\BG(\ell+1)
\BG(m'-r-\ell+1) \BG( m'-\ell+2\gl +3r)}X^{\ell} ,
\end{equation}
which is an element of $\mF_{m'-r} [X]$ with $m' \geq 0$.

\begin{prop} \label{P:sh}
The formula \eqref{E:28} determines the Hecke equivariant linear map
\[ \qs^{m,m', r}_{\gl, t, \chi}:
QP_{\gl+2m}^{m -r} (\G_0(4N), \chi ) \to QP_{2(\gl+
 m'+r)-1  }^{m'-r } (\G_0(2N), \chi^2) \]
for $m,m' \geq 0$ and $r \leq \min \{ m, m' \}$.
\end{prop}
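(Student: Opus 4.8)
The plan is to recognize $\qs^{m,m',r}_{\gl,t,\chi}$ as a composition of three maps, each of which is Hecke equivariant: extraction of the top coefficient, the Shimura correspondence, and a Cohen--Kuznetsov-type lifting from a modular form back up to a quasimodular polynomial. First I would note that the right-hand side of \eqref{E:28} depends on $F$ only through its leading coefficient $f_{m-r}$. By the half-integral analog of \eqref{E:hh} (equivalently of Corollary \ref{C:mh}), this coefficient is $\fS_{m-r}F = f_{m-r}\in M_{\gl+2r}(\G_0(4N),\chi)$, a genuine half-integral-weight modular form of weight $(\gl+2m)-2(m-r)=\gl+2r$. Applying Theorem \ref{T:5h} with $k=\gl+2r-\tfrac12$ then produces $g:=\sh_{\gl+2r,t,\chi}(f_{m-r})\in M_{2(\gl+2r)-1}(\G_0(2N),\chi^2)=M_{2\gl+4r-1}(\G_0(2N),\chi^2)$, which is an \emph{integral} (in fact even) weight modular form since $2\gl$ is odd. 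Thus the entire output is assembled from $g$ and its derivatives.

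Next I would show that this assembly lands in the asserted space. The target weight works out correctly: a degree $m'-r$ quasimodular polynomial built from a weight $2\gl+4r-1$ form has weight $(2\gl+4r-1)+2(m'-r)=2(\gl+m'+r)-1$, matching the claim. To see that \eqref{E:28} actually is such a quasimodular polynomial, I would verify that its coefficients satisfy the defining invariance recursion for $QP^{m'-r}_{2(\gl+m'+r)-1}(\G_0(2N),\chi^2)$, i.e. the half-integral analog of \eqref{E:ep}. Equivalently, \eqref{E:28} is the Cohen--Kuznetsov/$\Xi$-lifting of $g$: the Gamma-factor $\BG(2\gl+4r)=\BG(2(\gl+2r))$ is exactly the half-integral normalization (the doubled weight of the preimage $f_{m-r}$) that makes the derivatives $g^{(m'-r-\ell)}$ assemble into a $\G_0(2N)$-invariant polynomial with character $\chi^2$, just as in the Example of Section \ref{S:jq} and the lifting $\Xi^{\gl+2\gd}_m$ of Section \ref{S:fq}. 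The underlying input is the transformation law for derivatives of modular forms, which rests on the $\fj,\fk$ cocycle relations \eqref{hE:kk}. A reassuring consistency check is that the coefficient of $X^{m'-r}$ in \eqref{E:28} is precisely $g$ itself, in agreement with the leading-coefficient statement \eqref{E:hh}. Linearity of $\qs^{m,m',r}_{\gl,t,\chi}$ is then immediate, each of the three factors being linear.

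For the Hecke equivariance I would factor $\qs^{m,m',r}_{\gl,t,\chi}=\Xi\circ\sh_{\gl+2r,t,\chi}\circ\fS_{m-r}$ and intertwine Hecke operators one factor at a time. The map $\fS_{m-r}$ is Hecke equivariant by the half-integral analog of Theorem \ref{T:xk}, whose right-hand square asserts exactly $\fS_{m-r}\circ T^P=T\circ\fS_{m-r}$; the Shimura map $\sh$ is Hecke equivariant by Theorem \ref{T:5h} and the surrounding discussion; and the $\Xi$-lifting is Hecke equivariant by the half-integral analog of diagram \eqref{E:54}. Pasting the three resulting commutative squares along their shared edges yields the commutative square expressing equivariance of $\qs^{m,m',r}_{\gl,t,\chi}$.

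The hard part will be the Hecke bookkeeping in this last step, and for two distinct reasons. First, the half-integral analogs of diagrams \eqref{E:11} and \eqref{E:54} must be re-derived from scratch with $\fj,\fk$ in place of $\fJ,\fK$, carrying through the determinant and character factors of the $\mG$-action of Section \ref{S:jj}; they are not literally the statements already proved. Second, and more essentially, the Shimura correspondence re-indexes Hecke operators: the half-integral operator attached to a double coset over $p^2$ matches the integral operator over $p$, so unlike the other maps -- where a single $\wt{\ga}$ indexes both sides -- here I must check that the integral Hecke operator arising after $\fS_{m-r}$ and $\sh$ is precisely the one for which the $\Xi$-diagram closes. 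Getting this $p\leftrightarrow p^2$ matching right, rather than the essentially formal assembly of derivatives encoded in \eqref{E:28}, is where the genuine care is required.
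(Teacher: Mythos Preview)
Your proposal is correct and follows essentially the same route as the paper: both factor $\qs^{m,m',r}_{\gl,t,\chi}$ as $\Xi^{2\gl+4r-1}_{m'-r}\circ\sh_{\gl+2r,t,\chi}\circ\fS_{m-r}$, verify that this composite lands in the target space by identifying \eqref{E:28} with the Cohen--Kuznetsov lifting of $g=\sh_{\gl+2r,t,\chi}(f_{m-r})$, and deduce Hecke equivariance from that of each factor via (the half-integral analogs of) diagrams \eqref{E:11} and \eqref{E:54} together with the classical equivariance of the Shimura map. Your closing remarks about the $p\leftrightarrow p^2$ reindexing and the need to rederive \eqref{E:11}, \eqref{E:54} in the $\fj,\fk$ setting are well-placed cautions that the paper simply takes for granted; note also that since $2\gl$ is odd the weight $2\gl+4r-1$ is an even integer, so the final $\Xi$-step in fact uses the integral-weight lifting (with character), not a genuinely half-integral one.
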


\begin{proof}
We first note that that  the map $\fS_m$ in \eqref{E:hh} can be extended to
the half integral case, so that
\[ \fS_m (QP_{\gl}^m (\G,\chi)) \subset M _{\gl-2m} (\G,\chi) .\]
The lifting formula \eqref{E:ak} can also be modified so that for each $h
\in M_\mu (\G, \chi)$ with $\mu \in \frac 12 \bZ$ the polynomial
\begin{equation} \label{E:93}
(\Xi^{2\mu}_m h) (z,X) = \sum^m_{\ell=0} \frac {\BG (m+1) \BG
(2\mu) h^{(m-\ell)} (z)} {\BG (\ell +1) \BG (m-\ell +1) \BG
(m-\ell + 2\mu) } X^\ell
\end{equation}
is a quasimodular polynomial belonging to $QP^m_{\mu +2m} (\G, \chi)$ and
satisfies
\[ \fS_m (\Xi^{2\mu}_m h) = h .\]
We now consider a quasimodular polynomial $F (z,X) \in QP_{\gl+2m}^{m -r} (\G_0
(4N), \chi)$ given by \eqref{E:64}, so that
\[ \fS_{m-r}(F) \in M_{\gl + 2r} (\G_0 (4N), \chi) ,\]
\[ \sh_{\gl+r,t, \chi} (\fS_{m-r} F) \in M_{2\gl +4r -1} (\G_0(2N), \chi^2) .\]
Thus, using \eqref{E:28} and \eqref{E:93}, we see that
\begin{align} \label{E:23}
\Xi^{2\gl+ 4r -1}_{m'-r}& (\sh_{\gl+r,t, \chi} (\fS_{m-r} F))\\
&= \sum_{\ell=0}^{m'-r} \frac{\BG   (m'-r+1)\BG (2\gl +4r)
  \sh^{(m'-r-\ell)}_{\gl+2r,t,\chi} (f_{m-r}))} {\BG(\ell+1) \BG(m'-r-\ell+1) \BG(
  m'-\ell+2\gl + 3r)} X^{\ell} \notag\\
&= \qs^{m,m', r}_{\gl, t, \chi} (F); \notag
\end{align}
hence it follows that
\[ \qs^{m,m', r}_{\gl, t, \chi} (F) \in QP_{2(\gl+  m'+r)-1  }^{m'-r }
(\G_0(2N), \chi^2) .\]
On the other hand, the maps $\fS_{m-r}$ and $\Xi^{2\gl+ 4r -1}_{m'-r}$ are
Hecke equivariant by the commutativity of the diagrams \eqref{E:11} and
\eqref{E:54}, respectively; hence  the Hecke equivariance of $\qs^{m,m',
r}_{\gl, t, \chi}$ follows from the same property for the Shimura map
$\sh_{\gl+r,t, \chi}$ of modular forms.
\end{proof}

From \eqref{E:23} we obtain the relation
\[
\qs^{m,m', r}_{\gl, t, \chi} = \Xi^{2\gl+ 4r -1}_{m'-r} \circ
\sh_{\gl+r,t, \chi} \circ \fS_{m-r}
\]
and therefore the Hecke equivariant commutative diagram
\[
\begin{CD}
 QP_{ \gl+2m}^{m -r}(\G_0(4N), \chi ) @> \qs^{m,m', r}_{\gl, t, \chi} >>
 QP_{2(\gl+r + m' )-1}^{m'-r} (\G_0(2N), \chi')\\
@  V \fS_{m-r} V V   @ AA \Xi^{2\gl+ 4r -1}_{m'-r} A\\
S_{\gl+2r  }(\G_0(4N), \chi ) @> {Sh_{\gl+r, \chi}} >> S_{2(\gl
 +2r)-1}(\G_0( 2N), \chi^2)
\end{CD}
\]
for each $r \in \{ 0, \ldots, m\}$.

Assuming that $m' \geq m$, we now consider a quasimodular polynomial
\[ G (z,X) \in QP_{\gl+2m}^m (\G_0(4N), \chi) ,\]
and define for each $r \in \{ 0, 1, \ldots , m \}$ the quasimodular
polynomial
\[ G_r (z,X) \in QP_{\gl+2m }^{m-r} (\G_0(4N), \chi ) \]
by setting
\[ G_0 = G, \quad G_{\ell +1} = G_{\ell} -(m-\ell)! (\Pi_{m-\ell}  \circ
\wh{\Xi}_0 \circ \fS_{m-\ell}) G_{\ell } \in QP_{\gl+2m
}^{m-\ell}(\G_0(4N), \chi) \] for $0 \leq \ell \leq m-1$, where
$\wh{\Xi}_0$ is as in \eqref{E:m2}.  Then, if we set
\[ \bigoplus_{r=0}^m \qs^{m,m', r}_{\gl, t, \chi} (G)
=( \qs^{m,m', 0}_{\gl, t, \chi} G_0,  \qs^{m,m', 1}_{\gl, t, \chi}
G_1 , \ldots , \qs^{m,m', m}_{\gl, t, \chi} G_m ) \] we obtain the complex linear
map
\[
\bigoplus_{r=0}^m \qs^{m,m', r}_{\gl, t, \chi}: QP_{
\gl+2m}^{m}(\G_0(4N), \chi )   \to \bigoplus_{r=0}^m
QP_{2(\gl+r+m')-1  }^{m'-r} (\G_0(2N), \chi^2) ,
\]
which is Hecke equivariant.

\section{\bf{Shintani Liftings}}

It was Shintani (cf.\ \cite{Sh75}) who constructed Hecke
equivariant maps from integral weight  $\lambda $  cusp forms to
half-integral weight cusp forms, which may be regarded as inverses
of Shimura maps.  In this section we study the quasimodular
version of Shintani maps.

We first review the construction of Shintani maps for modular forms.  Let
$\fQ$ be the space of integral indefinite binary quadratic forms of the
form
\[ Q =Q(X,Y) =aX^2+bXY+cY^2 =[a,b,c] \]
with $\dsc(Q)=b^2-4ac>0$.  Given a positive integer $M$, we set
\[ \fQ_M = \{Q(X,Y)=[a,b,c] \in \fQ \mid (a, M) =1,\; b \equiv
c\equiv 0 \!\!\!\!\pmod{M} \} \]
if $M$ is odd, and
\[ \fQ_M = \{ Q(X,Y)=[a,b,c] \in \fQ  \mid (a, M) =1,\; b\equiv 0
 \!\!\!\!\pmod{2M},\; c\equiv 0  \!\!\!\!\pmod{M}\} \]
if $M$ is even.  Then the congruence subgroup $\G_0(M)$ acts on $\fQ_M$ on
the left by
\[ (\g \cdot Q) (X,Y) =Q((X,Y) \g^{-1}) \]
for all $\g \in \G_0(M)$ and $Q\in \fQ$.  Following \cite{Sh75}, to each
integral indefinite binary form $Q \in \fQ_M$ we associate a pair of points
$\go_Q, {\go'}_Q  \in \bP^1(\bR)=\bR \cup \{ i\infty \}$ given by
\[
(\go_Q , {\go'}_{Q}  )=
\begin{cases}
\Bigl( \frac {b+\sqrt{\dsc (Q)}}{2c}, \frac {b-\sqrt{\dsc (Q)}}{2c} \Bigr)
& \text{if $c \neq 0$;}\\
(i \infty, \frac{a}{b}) & \text{$c=0$ and $b > 0$;}\\
(\frac{a}{b}, i\infty) & \text{$c=0$ and $b < 0$.}
\end{cases}
\]
Given $Q \in \fQ$, we denote by $\g_Q$ the unique generator of the
stabilizer of $Q \in \fQ_M $ in the congruence group $\G_0 (M)$.  We then
consider the path $C_Q$ in $\mH$ defined by
\[ C_Q =
\begin{cases}
\ov {(\go_Q, \go'_{Q})} & \text{if $\dsc (Q)$ is a perfect square;}\\
\ov {(\go, \g_Q \go)} & \text{otherwise,}
\end{cases}\]
where $\go $ is an arbitrary point in $\bP^1 (\bQ)$ and $\ov{(\cdot,
\cdot)}$ denotes the oriented geodesic path joining the given pair of
points.

We write
\[ \chi (Q) = \chi (a) \]
if $Q=[a,b,c]\in \fQ_M$.  Given $f\in S_{2\gl-1}(\G_0(M),
\chi^2)$, we also set
\[
\gT_{\gl,\chi}(f) =
\begin{cases}
\sum_{Q\in \fQ / \G_0(M)} I_{\gl,\chi} q^{\frac{\dsc (Q)}{M}} & \text{if $M$
is odd;}\\
\sum_{Q\in \fQ / \G_0(M)} I_{\gl,\chi} q^{\frac{\dsc (Q)}{4M}}
& \text{if $M$ is even,}
\end{cases}
\]
and
\[ I_{\gl,\chi}(f, Q) = \chi(Q) \int_{C_Q} f(\tau) Q(1,
-\tau)^{\gl-\frac{3}{2}}  d\tau. \] We denote by
$S_{\gl}(\G_0(4M), \chi')$ the space of cups forms of level $4M,$
half integral weight $\gl,$ and Nebentype character $\chi'$.  The
following result was obtained by Shintani.

If $\chi$ is a Dirichlet character $\chi$ defined modulo $M$, we
define the associated Nebentype character $\chi'$ modulo $M$ by
\[ \chi'(d) =\chi(d) \left(\frac{(-1)^{2\lambda+1}M}{d}\right) \]
for all $d\in (\bZ  / 4M \bZ)^{\times}$.

\begin{thm}
Let $\chi$ be a Dirichlet character defined modulo $M$.  Then for each
$f\in S_{2\gl-1}(\G_0(M), \chi^2),$ the series $\gT_{\gl,\chi}(f)$ is the
$q $-expansion of a half-integral weight cusp form in $S_{\gl}(\G_0(4M),
\chi')$.  Moreover, the map
\[ \gT_{\gl,\chi}: S_{2\gl-1}(\G_0(M), \chi^2) \to
S_{\gl} (\G_0(4M), \chi') \]
is a Hecke-equivalent $\bC$-linear map.
\end{thm}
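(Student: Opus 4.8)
The plan is to follow Shintani's original construction in \cite{Sh75}, realizing $\gT_{\gl,\chi}$ as a theta lift so that the modular transformation in the lifted variable $z$ is built into a kernel function. First I would check that the period integral $I_{\gl,\chi}(f,Q)$ depends only on the $\G_0(M)$-class of $Q$: since $f\in S_{2\gl-1}(\G_0(M),\chi^2)$ and $C_{\g\cdot Q}=\g(C_Q)$, the substitution $\tau\mapsto\g\tau$ in the integral turns the automorphy factor of $f$ and the transformation of $Q(1,-\tau)^{\gl-3/2}$ into the equality $I_{\gl,\chi}(f,\g\cdot Q)=I_{\gl,\chi}(f,Q)$, so the sum over $\fQ/\G_0(M)$ is well defined. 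Cuspidality of $f$ gives exponential decay toward the cusps, which makes each cycle integral converge absolutely: for nonsquare $\dsc(Q)$ the path $C_Q$ is the image of the closed geodesic attached to the stabilizer generator $\g_Q$, while for square $\dsc(Q)$ it joins two cusps at which $f$ vanishes rapidly. Since there are only finitely many classes of forms of a given positive discriminant, each coefficient of $q^{\dsc(Q)/M}$ (resp.\ $q^{\dsc(Q)/4M}$) is a finite sum, and $\gT_{\gl,\chi}(f)$ is a well-defined $q$-series.

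The core of the argument is to identify this $q$-series with a holomorphic theta lift. I would introduce the Shintani kernel
\[ \Omega_{\gl,\chi}(\tau,z)=\sum_{Q\in\fQ_M}\chi(Q)\,Q(\tau,1)^{\gl-3/2}\,e^{2\pi i\,\dsc(Q)\,z/(4M)} \]
(suitably normalized, with the half-integral automorphy factor incorporated), designed so that $\Omega_{\gl,\chi}$ is a cusp form of weight $2\gl-1$ and character $\chi^2$ for $\G_0(M)$ in $\tau$, and of weight $\gl$ with Nebentype $\chi'$ for $\G_0(4M)$ in $z$. Unfolding the Petersson pairing in the $\tau$-variable against this kernel reproduces the cycle integrals, so that $\gT_{\gl,\chi}(f)(z)=c\,\la f,\,\Omega_{\gl,\chi}(\cdot,z)\ra$ for a nonzero constant $c$. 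The modularity of the kernel in $z$ then immediately yields $\gT_{\gl,\chi}(f)\in S_\gl(\G_0(4M),\chi')$, with the cusp condition coming from the vanishing of the zero-discriminant contribution together with the decay of $f$.

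The \emph{hard part} will be establishing the half-integral-weight transformation of $\Omega_{\gl,\chi}$ in the $z$-variable, including the precise theta-multiplier $\fj(\g,z)=\gt(\g z)/\gt(z)$ and the Nebentype $\chi'(d)=\chi(d)\left(\tfrac{(-1)^{2\gl+1}M}{d}\right)$. This rests on the transformation law, under Poisson summation, of the Siegel theta function attached to the signature $(1,2)$ lattice of binary quadratic forms in $\fQ_M$; the quadratic character and the factor $\gt(\g z)/\gt(z)$ emerge from the associated Weil representation, and matching them with the congruence conditions defining $\fQ_M$ requires a careful Gauss-sum computation. Finally, for Hecke equivariance I would use the self-adjointness of the Hecke operators with respect to the Petersson product to reduce the claim to the kernel identity asserting that $T_{p^2}$ acting in $z$ and $T_p$ acting in $\tau$ agree on $\Omega_{\gl,\chi}$; this in turn follows by writing out the Hecke correspondence on $\fQ_M$ and verifying that the $p$-action on integral-weight forms matches the $p^2$-action on half-integral ones, a coset bookkeeping that parallels the proof of Hecke equivariance for the Shimura map in Theorem \ref{T:5h}.
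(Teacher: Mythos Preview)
Your sketch is a reasonable outline of Shintani's original argument in \cite{Sh75}: one realizes $\gT_{\gl,\chi}$ as a theta lift against a Shintani kernel $\Omega_{\gl,\chi}(\tau,z)$, so that modularity in $z$ follows from the transformation law of the kernel under the Weil representation, and Hecke equivariance reduces to a kernel identity matching $T_p$ in $\tau$ with $T_{p^2}$ in $z$. The paper, however, does not attempt any of this; its proof is simply the citation ``See \cite{Sh75}'', since this theorem is quoted as background for the quasimodular Shintani map constructed in Proposition \ref{P:sn}. So there is nothing to compare at the level of mathematical strategy: you have supplied what the paper deliberately outsources. If you want to align with the paper, a one-line reference to \cite{Sh75} suffices; if you prefer to keep your sketch, be aware that the delicate points you flag (the precise theta multiplier and Nebentype, and the Gauss-sum computation matching the congruence conditions on $\fQ_M$) are exactly where the work lies, and a full verification would amount to reproducing a substantial portion of Shintani's paper.
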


\begin{proof}
See \cite{Sh75}.
\end{proof}

To introduce a quasimodular version of Shintani's result, we consider
a quasimodular polynomial $F (z,X) \in QP_{2\gl-1+2m}^{m -r} (\G_0(4M), \chi^2)$
of the form
\[
F (z,X) = \sum_{u=0}^{m-r} f_u (z) X^u
\]
for $z \in \mH$, and set
\begin{equation} \label{E:2u}
(\qt^{m,m', r}_{\gl, \chi} F) (z,X) = \sum_{\ell=0}^{m'-r} \frac {\BG
  (m'-r+1) \BG (2 \gl++2r )\gT_{\gl+r,\chi}^{(m'-r-\ell)} (f_{m-r}) (z)}
{\BG(\ell+1)\BG(m'-r-\ell+1) \BG( m'-\ell+2\gl+r)}X^{\ell} ,
\end{equation}
which is an element of $\mF_{m'-r} [X]$ with $m' \geq 0$.

\begin{prop} \label{P:sn}
The formula \eqref{E:2u} determines the Hecke equivariant linear map
\[ \qt^{m,m', r}_{\gl, \chi}: QP_{2\gl-1+2m}^{m -r}(\G_0(M), \chi^2) \to
QP_{2m'+\gl-r   }^{m'-r } (\G_0(4M), \chi') \] for $m, m' \geq$
and $0 \leq r \leq \min \{ m,m'\}$.
\end{prop}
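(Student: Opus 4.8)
The plan is to prove Proposition~\ref{P:sn} by exactly the device used for the Shimura analog in Proposition~\ref{P:sh}, replacing the Shimura map by the Shintani map $\gT$. Concretely, I would establish the factorization
\[ \qt^{m,m',r}_{\gl,\chi} = \Xi^{2(\gl+r)}_{m'-r} \circ \gT_{\gl+r,\chi} \circ \fS_{m-r}, \]
where $\fS_{m-r}$ is the leading-coefficient map (extended to the half-integral, with-character setting of Section~9), $\gT_{\gl+r,\chi}$ is the Shintani operator of the preceding theorem, and $\Xi^{2(\gl+r)}_{m'-r}$ is the half-integral lifting given by \eqref{E:93}. Once this identity is in hand, both the assertion that $\qt^{m,m',r}_{\gl,\chi}$ lands in the stated space and its Hecke equivariance follow formally.

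First I would track weight, level, and character through the three factors. Starting from $F(z,X)=\sum_{u=0}^{m-r} f_u(z) X^u \in QP^{m-r}_{2\gl-1+2m}(\G_0(M),\chi^2)$, the extended top-coefficient map yields $\fS_{m-r}(F)=f_{m-r} \in M_{2(\gl+r)-1}(\G_0(M),\chi^2)$, since $\fS_{m-r}$ lowers the weight of a degree-$(m-r)$ polynomial by $2(m-r)$. The Shintani map then sends this to $S_{\gl+r}(\G_0(4M),\chi') \subset M_{\gl+r}(\G_0(4M),\chi')$, matching the half-integral weight $\gl+r$, the level $4M$, and the Nebentype $\chi'$. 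Finally, lifting the weight-$(\gl+r)$ form by $\Xi^{2(\gl+r)}_{m'-r}$ raises the weight by $2(m'-r)$, producing a quasimodular polynomial of degree $\leq m'-r$ and weight $(\gl+r)+2(m'-r)=2m'+\gl-r$, which is precisely $QP^{m'-r}_{2m'+\gl-r}(\G_0(4M),\chi')$, the claimed target.

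Next I would verify that the explicit coefficients of \eqref{E:2u} coincide termwise with those produced by $\Xi^{2(\gl+r)}_{m'-r}$. Setting $\mu=\gl+r$ and replacing $m$ by $m'-r$ in \eqref{E:93}, the coefficient of $X^\ell$ becomes
\[ \frac{\BG(m'-r+1)\,\BG(2\gl+2r)\,\gT^{(m'-r-\ell)}_{\gl+r,\chi}(f_{m-r})(z)}{\BG(\ell+1)\,\BG(m'-r-\ell+1)\,\BG(m'-\ell+2\gl+r)}, \]
after simplifying $m'-r-\ell+2(\gl+r)=m'-\ell+2\gl+r$; this agrees with \eqref{E:2u}, establishing the factorization. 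For Hecke equivariance I would invoke the three corresponding facts: $\fS_{m-r}$ is Hecke equivariant by the half-integral analog of the commutative diagram \eqref{E:11}, $\Xi^{2(\gl+r)}_{m'-r}$ by the half-integral analog of \eqref{E:54}, and $\gT_{\gl+r,\chi}$ by the Shintani theorem just stated; a composite of Hecke-equivariant maps is Hecke equivariant.

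The main obstacle I anticipate is not any single calculation but the careful bookkeeping of the characters and the half-integral multiplier system through each step: one must confirm that the extensions of $\fS$ and of the lifting $\Xi$ to the $\G_0(M)$-with-character framework genuinely carry $\chi^2$ to $\chi^2$ and $\chi'$ to $\chi'$, and that the half-integral versions of the diagrams \eqref{E:11} and \eqref{E:54} hold with the automorphy factor $\fj(\g,z)$ in place of $\fJ(\g,z)$. Apart from this bookkeeping, the argument is a direct transcription of the proof of Proposition~\ref{P:sh}.
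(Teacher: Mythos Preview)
Your proposal is correct and follows essentially the same approach as the paper: the paper likewise establishes the factorization $\qt^{m,m',r}_{\gl,\chi} = \Xi^{2(\gl+r)}_{m'-r} \circ \gT_{\gl+r,\chi} \circ \fS_{m-r}$ by the explicit coefficient check against \eqref{E:93}, and then deduces both the target space and Hecke equivariance from the corresponding properties of the three factors via the diagrams \eqref{E:11} and \eqref{E:54}. Your caveat about the half-integral character bookkeeping is well taken, though the paper itself does not elaborate on it beyond invoking those diagrams.
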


\begin{proof}
Given a quasimodular polynomial $F (z,X) \in QP_{2\gl-1+2m}^{m -r}
(\G_0(M), \chi^2)$, we have
\[ \fS_{m-r} F \in S_{2(\gl +r) -1} (\G_0(M), \chi^2) ,\]
\[ \gT_{\gl+r, \chi} (\fS_{m-r} F) \in S_{\gl +r} (\G_0 (4M), \chi') .\]
Thus, using \eqref{E:93}and \eqref{E:2u}, we see that
\begin{align} \label{E:4u}
\Xi^{2(\gl +r)}_{m'-r}& (\gT_{\gl+r, \chi} (\fS_{m-r} F))\\
&= \sum_{\ell=0}^{m'-r} \frac {\BG
  (m'-r+1) \BG (2\gl+2r) \gT_{\gl+r,\chi}^{(m'-r-\ell)} (f_{m-r}) (z)}
{\BG(\ell+1)\BG(m'-r-\ell+1) \BG( m'-\ell+2\gl+r)}X^{\ell} \notag\\
&= \qt^{m,m', r}_{\gl, \chi} (F); \notag
\end{align}
hence it follows that
\[ \qt^{m,m', r}_{\gl, \chi} (F) \in QP_{2m'+\gl-r   }^{m'-r }
(\G_0(4M), \chi') .\] On the other hand, the maps $\fS_{m-r}$ and
$\Xi^{2(\gl+  r)}_{m'-r}$ are Hecke equivariant by the
commutativity of the diagrams \eqref{E:11} and \eqref{E:54},
respectively; hence  the Hecke equivariance of $\qt^{m,m',
  r}_{\gl, \chi}$ follows from the same property for the Shintani map
$\gT_{\gl+r, \chi}$ of modular forms.
\end{proof}

From \eqref{E:4u} we obtain the relation
\[
\qt^{m,m', r}_{\gl, \chi} = {\Xi^{2(\gl +r)}_{m'-r}} \circ
\gT_{\gl+r, \chi} \circ \fS_{m-r}
\]
as well as the Hecke equivariant commutative diagram
\[
\begin{CD}
QP_{2\gl-1+2m}^{m -r}(\G_0( M), \chi^2)   @> \qt^{m,m', r}_{\gl,
\chi}:
>> QP_{ 2 m'+\gl-r}^{m'-r} (\G_0(4M), \chi')\\
@  V \fS_{m-r} VV   @ AA \Xi^{2(\gl +r)}_{m'-r} A \\
S_{2(\gl+r)-1 }(\G_0(M), \chi^2) @> {\gT_{\gl+r, \chi}} >>
S_{\gl+r }(\G_0(4M), \chi')
\end{CD}
\]
for each $r \in \{ 0,1, \ldots, m\}$.

We now consider a quasimodular polynomial $G (z,X) \in
QP_{2\gl-1+2m}^{m-r} (\G_0(M), \chi^2)$, and define the associated
quasimodular polynomials $G_r (z,X) \in QP_{2\gl-1+2m}^{m-r}
(\G_0(M), \chi^2)$ with $m' \geq m$ for each $0 \leq r \leq m$ by
\[ G_0 = G, \quad G_{\ell +1} = G_{\ell} -(m-\ell)! (\Pi_{m-\ell}  \circ
\wh{\Xi}_0 \circ \fS_{m-\ell}) G_{\ell } \in QP_{2\gl-1+2m
}^{m-\ell} \] for $0 \leq \ell \leq m-1$.  Then by setting
\[ \bigoplus_{r=0}^m \qt^{m,m', r}_{\gl, \chi} (G)
=( \qt^{m,m', 0}_{\gl, \chi} G_0,  \qt^{m,m', 1}_{\gl, \chi} G_1 ,
\ldots , \qt^{m,m', m
}_{\gl, \chi} G_m ) \] we obtain the
$\bC$-linear map
\[
\bigoplus_{r=0}^m \qt^{m,m', r}_{\gl, \chi}: QP_{ 2\gl-1+2m}^{m}
(\G_0( M), \chi^2) \to \bigoplus_{r=0}^m QP_{ 2m'+\gl-r }^{m'-r}
(\G_0(4M), \chi') ,
\]
which is Hecke equivariant.

\providecommand{\bysame}{\leavevmode\hbox to3em{\hrulefill}\thinspace}
\providecommand{\MR}{\relax\ifhmode\unskip\space\fi MR }
\providecommand{\MRhref}[2]{%
  \href{http://www.ams.org/mathscinet-getitem?mr=#1}{#2}
}
\providecommand{\href}[2]{#2}


\begin{thebibliography}{10}

\bibitem{OA07}
N.~O. Azaiez, \emph{The ring of quasimodular forms for a cocompact group},
  J.Number Theory \textbf{128} (2008), 1966-1988.

\bibitem{BF} V. Bosser and F. Pellarin, \emph{
On certain families of Drinfeld quasi-modular forms}, J.
Number Theory, \textbf{129} (2009), 2952-2990.


\bibitem{BGHZ}J. H. Bruinier, G. Van Der Geer, G. Harder and
D. Zagier, The $1,2,3 $ of modular forms, Lectures at a Summer
school in Nordfjordeid, Norway, Springer, Heidelberg, 2007.

\bibitem{C1} Y. Choie, \emph{Jacobi forms and the heat operator},
 Math. Z. \textbf{225}
(1997), 95--101.

\bibitem{C2} Y. Choie, \emph{Jacobi forms and the heat operator},  II.
Illinois J. Math. \textbf{42} (1998), 179--186.

\bibitem{CL07}
Y.~Choie and M.~H. Lee, \emph{Rankin-{C}ohen brackets on pseudodifferential
  operators}, J. Math. Anal. Appl. \textbf{326} (2007), 882--895.

\bibitem{CM97}
P.~B. Cohen, Y.~Manin, and D.~Zagier, \emph{Automorphic pseudodifferential
  operators}, Algebraic Aspects of Nonlinear Systems, Birkh\"auser, Boston,
  1997, pp.~17--47.

\bibitem{EZ85}
M.~Eichler and D.~Zagier, \emph{The theory of {J}acobi forms}, Progress in
  Math., vol.~55, Birkh{\"{a}}user, Boston, 1985.

\bibitem{EO06} A. Eskin and A. Okounkov, \emph{Pillowcases and quasimodular
forms}, Algebraic geometry and number theory, Progr. Math.,
vol.~253, Birkh\"{a}user, Boston, 2006, pp.~1--25.

\bibitem{Eo01}
A.~Eskin and A.~Okounkov, \emph{Asymptotics of numbers of branched coverings of
  a torus and volumes of moduli spaces of holomorphic differentials}, Invent.
  Math. \textbf{145} (2001), 59--103.

\bibitem{KK07}
M. Kaneko and M. Koike, \emph{On extremal quasimodular forms},
Kyushu J. Math. \textbf{60} (2006), 457--470.


\bibitem{KK06} M. Kaneko, \emph{On modular forms of weight $(6n+1)/5$
satisfying a certain differential equation},  Number theory, Dev. Math.,
vol. 15, Springer, New York, 2006, pp.~97--102.

\bibitem{KZ95}
M.~Kaneko and Zagier, \emph{A generalized {J}acobi theta function and
  quasimodular forms}, Progress in Math., vol. 129, Birkh{\"{a}}user, Boston,
  1995, pp.~165--172.

\bibitem{L07a}
M.~H. Lee, \emph{Lie algebras of formal power series}, Rev. Mat. Complut.
  \textbf{20} (2007), 463--381.

\bibitem{L09a}
M.~H. Lee, \emph{Radial heat operators on {J}acobi-like forms},
Math. J. Okayama Univ. \textbf{51} (2009), 27--46.

\bibitem{LM01}
M.~H. Lee and H.~C. Myung, \emph{Hecke operators on {J}acobi-like forms},
  Canad. Math. Bull. \textbf{44} (2001), 282--291.

\bibitem{LR} S. Leli\'evre and E. Royer,\emph{Orbitwise countings in
$H(2)$ and quasimodular forms}, Int. Math. Res. Not. 2006, Art. ID
42151, 30 pp

\bibitem{MR07}
F.~Martin and E.~Royer, \emph{Rankin-{C}ohen brackets on quasimodular forms},
  math.NT/050965.

\bibitem{Mi89}
T.~Miyake, \emph{Modular forms}, Springer-Verlag, Heidelberg, 1989.

\bibitem{Mo07}
H.~Movasati, \emph{On differential modular forms and some analytic relations
  between {E}isenstein series},  Ramanujan J.  \textbf{17} (2008), 53--76.

\bibitem{OP06}
A.~Okounkov and R.~Pandharipande, \emph{Gromov-{W}itten theory, {H}urwitz
  theory, and completed cycles}, Ann. of Math. \textbf{163} (2006), 517--560.

\bibitem{Ro07}
E.~Royer, \emph{Evaluating convolution sums of the divisor function via
  quasimodular forms}, Int. J. Number Theory \textbf{21} (2007), 231--262.

\bibitem{Sh73}
G.~Shimura, \emph{On modular forms of half integral weight}, Ann. of Math.
  \textbf{97} (1973), 440--481.

\bibitem{Sh75}
G.~Shintani, \emph{On construction of holomorphic cusp forms of half integral
  weight}, Nagoya Math. J. \textbf{58} (1975), 83--126.

\bibitem{Za94}
D.~Zagier, \emph{Modular forms and differential operators}, Proc. Indian Acad.
  Sci. Math. Sci. \textbf{104} (1994), 57--75.

\end{thebibliography}
\end{document}